\newcommand{\flipper}{\texttt{flipper}}
\newcommand{\RRz}{\RR_{\geq 0}}
\DeclareMathOperator{\nummarkedpoints}{n}
\let\genus\relax
\DeclareMathOperator{\genus}{g}  
\DeclareMathOperator{\canonical}{\sigma}
\DeclareMathOperator{\logdegree}{dg}
\DeclareMathOperator{\height}{hgt}
\title{The pseudo-Anosov and conjugacy problems are in $\NP \cap \coNP$}
\author{Mark C. Bell\\
University of Illinois\\
\url{mcbell@illinois.edu}}
\begin{document}

\maketitle

\begin{abstract}
For a fixed marked surface $S$, we construct polynomial bounds on the periodic and preperiodic lengths of the maximal splitting sequences of a projectively invariant measured train track.

We give two consequences of these bounds. Firstly, that the problem of deciding whether a mapping class is pseudo-Anosov lies in $\NP$. This is dual to the previously known result that the pseudo-Anosov problem is in $\coNP$. Secondly, that the problem of deciding whether two mapping classes are conjugate lies in $\coNP$. Similarly, this is the dual to the previously known result that the conjugacy problem is in $\NP$.

As usual, in both cases we immediately obtain exponential time solutions to these problems. A version of these algorithms have been implemented as part of \flipper{}.
\end{abstract}

\keywords{mapping class group; pseudo-Anosov; conjugacy; $\NP \cap \coNP$.}

\ccode{37E30, 68W40, 57-04}

\section{Introduction}


Fix $S$ to be a marked surface with at least one marked point (or at least three when the surface has no genus).

Fix $X$ to be a finite generating set of $\Mod(S)$, the \emph{mapping class group} of $S$ (relative to the set of marked points). Let $X^*$ denote the set of all \emph{words} that can be made using the elements of $X$ as letters. We identify a word $h = h_1 \cdots h_k \in X^*$ with the mapping class $h_1 \circ \cdots \circ h_k$ and denote its \emph{length} by $\ell(h) \defeq k$.

Let $\ML(S)$ denote the space of \emph{measured laminations} on $S$ \cite[Section~1.7]{PH} \cite[Section~3]{CB}. Recall that a mapping class $h \in \Mod(S)$ is \emph{pseudo-Anosov} \cite[Expos\`{e}~12]{FLP} \cite[Page~95]{CB} if there is a measured lamination $\calL \in \ML(S)$ which is:
\begin{itemize}
\item \emph{projectively invariant}, that is, $h(\calL) = \lambda \cdot \calL$ for some $\lambda \in \RR$, and
\item \emph{filling}, that is, it assigns positive measure to every essential, simple closed curve.
\end{itemize}
We say that a word is \emph{pseudo-Anosov} if its corresponding mapping class is.

\begin{problem}[The Pseudo-Anosov Problem]
Given a word $h \in X^*$, decide whether $h$ is pseudo-Anosov.
\end{problem}

If $h$ is not pseudo-Anosov then by the Nielsen--Thurston classification \cite[Chapter~13]{FM} it is either \emph{periodic} or \emph{reducible}. There is a polynomial time solution to the word problem for $\Mod(S)$ \cite[Theorem~4.2]{FM} and an upper bound on the order of finite order mapping classes \cite[Theorem~7.5]{FM}. Together these give a polynomial time algorithm to determine whether $h$ is \emph{periodic}. On the other hand, there are several proofs that if $h$ is reducible then there is a reducing curve whose complexity is at most $O(\poly(\ell(h)))$ \cite[Theorem~3.3]{BellReducibility} \cite[Theorem~1.1]{KoberdaMangahas}. Such a curve acts as a certificate of reducibility which can be verified in polynomial time. This shows that the pseudo-Anosov problem is in $\coNP$ \cite[Corollary~3.5]{BellReducibility}.

In Section~\ref{sec:pA} we prove the dual result. Namely, if $h \in X^*$ is pseudo-Anosov then there is a description of a measured lamination $\calL$ such that it can be checked that $\calL$ is projectively invariant and filling in $O(\poly(\ell(h)))$ time. That is, if a word is pseudo-Anosov then there is a small proof of this, or equivalently:
\begin{restate}{Corollary}{cor:pA_NP}
The pseudo-Anosov problem lies in $\NP$. \qed
\end{restate}

In the second half of this paper, we consider the \emph{conjugacy problem}. Here we write $g \equiv h$ if two words $g, h \in X^*$ represent the same mapping class.

\begin{problem}[The Conjugacy Problem]
Given words $g, h \in X^*$, decide whether $g$ and $h$ represent \emph{conjugate} mapping classes. That is, decide whether there is there a word $f \in X^*$ such that $f g f^{-1} \equiv h$.
\end{problem}

Theorem~B of \cite{jT} states that there there is a constant $K = K(S, X)$ such that if two words $g, h \in X^*$ represent conjugate mapping classes then there is a word $f \in X^*$ such that
\[ h \equiv f g f^{-1} \inlineand \ell(f) \leq K \cdot (\ell(g) + \ell(h)). \]
Such a word $f$ acts as a certificate that $g$ and $h$ are conjugate and, as it is sufficiently small, this can be verified in polynomial time \cite{MosherAutomatic}. This shows that the conjugacy problem is in $\NP$.

In Section~\ref{sec:conjugacy} we use the results developed in Section~\ref{sec:train_tracks} to again show the dual result. Namely, if a pair of words do not represent conjugate mapping classes then there is a short proof of this, or equivalently:
\begin{restate}{Corollary}{cor:conjugacy_coNP}
The conjugacy problem lies in $\coNP$. \qed
\end{restate}

Finally, in Section~\ref{sec:implementation} we discuss some important implementation details of these algorithms.

\subsection{Model of computation}

The operations that we will describe will involve manipulating reasonably large numbers. We will take care to include the difficultly of performing these in our analysis of the algorithm.

\begin{definition}
\label{def:bounded}
An integer $x$ is \emph{$k$--bounded} if it has at most $k$ digits, that is, if it can be represented by $O(k)$ bits.
\end{definition}

As part of our model of computation, we will assume that if $x$ and $y$ are $k$--bounded then:
\begin{itemize}
\item $\sgn(x)$ can be computed in $O(1)$ operations,
\item $x \pm y$ is $(k + 1)$--bounded and can be computed in $O(k)$ operations, and
\item $xy$ is $2k$--bounded and can be computed in $O(k^2)$ operations.
\end{itemize}
However, for ease of argument, we will assume a model of computation in which there is no cost associated to accessing variables.

\subsection{Models of \texorpdfstring{$\ML(S)$}{ML(S)} and \texorpdfstring{$\Mod(S)$}{Mod(S)}}
\label{sub:model_ML}

Fix $\calT$ to be an (ideal) triangulation of $S$ with (ordered) edges $e_1, \ldots, e_\zeta$. The triangulation $\calT$ provides a coordinate system on $\ML(S)$ where the lamination $\calL \in \ML(S)$ is represented by the \emph{edge vector}:
\[\calT(\calL) \defeq
\left(
\begin{array}{c}
\calL(e_1) \\
\vdots \\
\calL(e_\zeta) \\
\end{array}
\right) \in \RRz^\zeta
\]
Although distinct laminations have distinct coordinates, not all coordinates correspond to measured laminations. In fact $v_1, \ldots, v_\zeta$ corresponds to a measured lamination if and only if:
\begin{itemize}
\item $\sum v_i > 0$,
\item for each triangle $a, b, c$ it satisfies the triangle inequality $v_a + v_b \geq v_c$, and
\item for each vertex $v$ there is an incident triangle $a, b, c$ such that $a \cap b = v$ and $v_a + v_b = v_c$.
\end{itemize}
We express this requirement as a linear programming problem:

\begin{lemma}
\label{lem:is_multicurve_LP}
For each triangulation $\calT$, there are $O(1)$--bounded $\zeta \times 3\zeta$ matrices $F_1, \ldots, F_k$ such that $v \in \RRz^\zeta$ is in the image of $\calT(\cdot)$ if and only if $v \neq 0$ and
\[ F_i \cdot v \geq 0 \]
for some $i$. \qed
\end{lemma}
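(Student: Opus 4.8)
The plan is to read the matrices $F_1,\dots,F_k$ directly from the characterisation of the image of $\calT(\cdot)$ recorded just above, the only real move being to convert the ``for each vertex there is an incident triangle\ldots'' clause into a finite disjunction. Write (i), (ii), (iii) for the three bulleted conditions in order, so that $v\in\RRz^\zeta$ lies in the image of $\calT(\cdot)$ exactly when all three hold. On $\RRz^\zeta$ a sum of nonnegative numbers is positive exactly when the vector is nonzero, so condition~(i), namely $\sum v_i>0$, is precisely the side condition ``$v\neq 0$'' already appearing in the statement; thus the $F_i$ only have to encode conditions~(ii) and~(iii).

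To this end I would introduce linear forms. For each triangle of $\calT$ and each choice of a distinguished side $c$ of it (with $a,b$ the other two sides) set $\tau(v)\defeq v_a+v_b-v_c$. For each vertex $w$ and each corner $\gamma$ at $w$ --- that is, a triangle incident to $w$ together with the two of its sides $a,b$ meeting at $w$, opposite side $c$ --- set $\varepsilon_{w,\gamma}(v)\defeq v_a+v_b-v_c$. (A self-folded triangle, or a triangle incident to $w$ at several corners, is simply handled side-by-side and corner-by-corner; the only effect is that a coefficient $2$ can appear.) Then condition~(ii) is the conjunction of the inequalities $\tau(v)\geq 0$, and condition~(iii) is the conjunction over vertices $w$ of the disjunctions $\bigvee_{\gamma\text{ at }w}\bigl(\varepsilon_{w,\gamma}(v)=0\bigr)$. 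Distributing $\wedge$ over $\vee$, condition~(iii) becomes $\bigvee_\phi\bigwedge_w\bigl(\varepsilon_{w,\phi(w)}(v)=0\bigr)$, where $\phi$ ranges over the choice functions sending each vertex $w$ to one of its corners $\phi(w)$; there are $k\defeq\prod_w d_w$ of these, with $d_w$ the number of corners at $w$, a number depending only on $\calT$.

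For each such $\phi$, let $F_\phi$ be the matrix on the variables $v_1,\dots,v_\zeta$ whose rows are all of the forms $\tau$ together with, for each vertex $w$, the single row $-\varepsilon_{w,\phi(w)}$; then $F_\phi v\geq 0$ asserts exactly that all triangle inequalities hold and that $\varepsilon_{w,\phi(w)}(v)\leq 0$ for every $w$. The key observation is that each $\varepsilon_{w,\phi(w)}$ is \emph{itself} one of the forms $\tau$ --- namely the triangle inequality, with distinguished side opposite $\phi(w)$, of the triangle carrying that corner --- so $F_\phi v\geq 0$ also forces $\varepsilon_{w,\phi(w)}(v)\geq 0$, hence $\varepsilon_{w,\phi(w)}(v)=0$, for every $w$. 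Therefore, for $v\in\RRz^\zeta$, the condition ``$F_\phi v\geq 0$ for some $\phi$'' holds if and only if conditions~(ii) and~(iii) both hold, and conjoining this with ``$v\neq 0$'' recovers exactly membership in the image of $\calT(\cdot)$. Relabel the $F_\phi$ as $F_1,\dots,F_k$. Every entry of each $F_i$ is an integer of absolute value at most $2$, hence $O(1)$--bounded; and $F_i$ has at most $2\zeta+\nummarkedpoints(S)\leq 3\zeta$ rows, since $\calT$ has $2\zeta/3$ triangles each contributing at most three forms $\tau$ while each of its $\nummarkedpoints(S)$ vertices contributes one further row, and since $\nummarkedpoints(S)\leq\zeta$ for every surface permitted here (this last inequality being where the standing hypothesis that a genus-zero surface carries at least three marked points enters). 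Padding with zero rows, which impose only the vacuous constraint $0\geq 0$, we may take the $F_i$ of one common size, matching the shape asserted in the statement.

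I do not expect a substantive obstacle: the description of which $v$ lie in the image of $\calT(\cdot)$ is already in hand, and the remainder is the elementary fact that a conjunction of disjunctions of linear constraints is a finite disjunction of systems of linear constraints. The only points needing a little care are the bookkeeping around degenerate (self-folded, or multiply incident) triangles when writing down the $\tau$ and $\varepsilon_{w,\gamma}$, and the row count above --- in particular the observation that the ``$\geq 0$'' half of each selected equality coincides with a triangle-inequality row and so costs nothing, without which one would only reach the weaker bound $2\zeta+2\,\nummarkedpoints(S)$.
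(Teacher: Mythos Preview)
Your argument is correct and is exactly the routine unpacking the paper has in mind: the lemma is stated with an immediate \qed{} and no proof, so there is nothing further to compare. Your handling of the row count (in particular the observation that each selected equality reuses a triangle-inequality row, and that $\nummarkedpoints(S)\leq\zeta$ under the paper's standing hypothesis) is the right way to reach the $3\zeta$ bound.
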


Let $G$ denote the \emph{graph of (labelled) triangulations} of $S$. This is a graph whose vertices are triangulations of $S$ and two are connected via an edge of length one if and only if they differ by a single \emph{flip}, as shown in Figure~\ref{fig:flip}, or a \emph{reordering} or the edges. As $G$ and $\Mod(S)$ are quasi-isometric \cite[Proposition~8.19]{BridsonHaefliger}, we may think of a word $h \in X^*$ as a path $p$ in $G$ from $\calT$ to $h(\calT)$. Moreover we may choose this path such that $\ell(p) \in O(\ell(h))$.

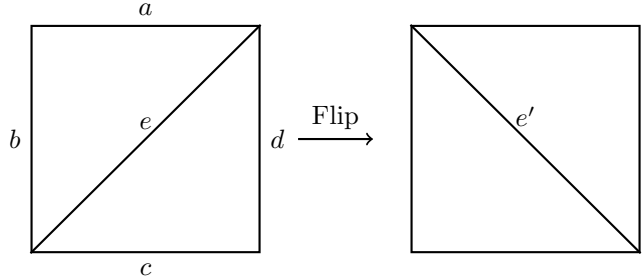
\begin{figure}[ht]
\centering
\begin{tikzpicture}[scale=2,thick]

\node (rect) at (-1.5, 0) [draw,minimum width=3cm,minimum height=3cm] {};
\node (rect2) at (1, 0) [draw,minimum width=3cm,minimum height=3cm] {};

\draw (rect.south west) -- node [above] {$e$} (rect.north east);
\draw (rect2.north west) -- node [above, yshift=1] {$e'$} (rect2.south east);

\node (a) at (rect.north) [anchor=south] {$a$};
\node (b) at (rect.west) [anchor=east] {$b$};
\node (c) at (rect.south) [anchor=north] {$c$};
\node (d) at (rect.east) [anchor=west] {$d$};

\draw [thick,->] ($(rect.east)!0.25!(rect2.west)$) -- node[above] {Flip} ($(rect.east)!0.75!(rect2.west)$);
\end{tikzpicture}
\caption{Flipping an edge of a triangulation.}
\label{fig:flip}
\end{figure}

The edge vector of a lamination is clearly well behaved under a reordering but it is also well behaved under a flip.

\begin{proposition}[{\cite[Page~30]{MosherFoliations}}]
\label{prop:flip_intersection_laminations}
Suppose that $\calL$ is a measured lamination and $e$ is a flippable edge of a triangulation $\calT$ as shown in Figure~\ref{fig:flip} then
\[ \calL(e') = \max(\calL(a) + \calL(c), \calL(b) + \calL(d)) - \calL(e). \inlineQED \]
\end{proposition}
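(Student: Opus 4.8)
The plan is to reduce the identity to a finite combinatorial count after isotoping $\calL$ into \emph{normal position} with respect to $\calT$, so that $\calL$ meets each triangle of $\calT$ in a disjoint union of \emph{normal arcs} --- arcs each of which joins two \emph{distinct} sides of the triangle --- and so that for every edge $g$ the measure $\calL(g)$ is exactly the total transverse measure crossing $g$. I would then work entirely inside the square formed by the two triangles of $\calT$ meeting along $e$, labelled as in Figure~\ref{fig:flip}, so that the triangle with sides $a$ and $b$ and the triangle with sides $c$ and $d$ are the two triangles adjacent to $e$, while $e'$ is the diagonal separating the pair $\{a, d\}$ from the pair $\{b, c\}$. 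In minimal position the components of $\calL$ inside this square are arcs with endpoints on $a \cup b \cup c \cup d$, and each falls into one of the six combinatorial types joining the side pairs $ab$, $bc$, $cd$, $ad$, $ac$, $bd$; let $x_{ab}, x_{bc}, x_{cd}, x_{ad}, x_{ac}, x_{bd} \geq 0$ denote the total transverse measure of each type. A Jordan-curve argument in the square shows that any $ac$-arc must cross any $bd$-arc, so disjointness of the leaves of $\calL$ forces $x_{ac} \cdot x_{bd} = 0$.

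Next I would read off the six edge weights by recording which arc types cross each edge:
\[ \calL(a) = x_{ab} + x_{ac} + x_{ad}, \quad \calL(c) = x_{ac} + x_{bc} + x_{cd}, \quad \calL(b) = x_{ab} + x_{bc} + x_{bd}, \quad \calL(d) = x_{ad} + x_{bd} + x_{cd}, \]
and, since the arcs crossing $e$ are exactly those of type $ac, ad, bc, bd$ while those crossing $e'$ are exactly those of type $ab, ac, bd, cd$,
\[ \calL(e) = x_{ac} + x_{ad} + x_{bc} + x_{bd}, \qquad \calL(e') = x_{ab} + x_{ac} + x_{bd} + x_{cd}. \]
Now split according to which of $x_{ac}, x_{bd}$ vanishes. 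If $x_{bd} = 0$ then $\bigl(\calL(a) + \calL(c)\bigr) - \bigl(\calL(b) + \calL(d)\bigr) = 2 x_{ac} \geq 0$, so $\max\bigl(\calL(a) + \calL(c),\, \calL(b) + \calL(d)\bigr) = \calL(a) + \calL(c)$, and subtracting $\calL(e) = x_{ac} + x_{ad} + x_{bc}$ leaves precisely $x_{ab} + x_{ac} + x_{cd} = \calL(e')$, as required. The case $x_{ac} = 0$ then follows formally by applying the reflection of the square that interchanges $a$ with $b$ and $c$ with $d$: this reflection preserves both diagonals $e$ and $e'$ setwise, exchanges $x_{ac}$ with $x_{bd}$, and swaps the two entries of the maximum, so the desired identity is invariant under it.

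The one genuinely delicate point is the first step: that every measured lamination can be isotoped into normal position with respect to the ideal triangulation $\calT$, and that in this position its intersection with each triangle --- hence with the square --- really is a disjoint union of normal arcs whose weights sum to the edge measures as claimed (no bigons with the edges, no closed components, no arc crossing a side twice). This is the standard theory of laminations carried by an ideal triangulation (compare the references for $\ML(S)$ cited above), so I would simply invoke it. If one preferred to sidestep it, one could verify the identity only for integral laminations, that is, for weighted multicurves, where the six counts $x_{\bullet}$ are manifestly finite, and then extend to all of $\ML(S)$ by continuity, using that both $\calL \mapsto \calL(e')$ and $\calL \mapsto \max\bigl(\calL(a) + \calL(c),\, \calL(b) + \calL(d)\bigr) - \calL(e)$ are continuous (indeed piecewise-linear) and that the rational, hence after rescaling the integral, points are dense. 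I expect this normalisation step to be the main thing to pin down; the bookkeeping and case split above are routine.
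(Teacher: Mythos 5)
Your proposal is correct. Note that the paper itself offers no proof of this proposition --- it is stated with a reference to Mosher's notes and a \verb|\qed| --- so there is nothing internal to compare against; your normal-arc counting argument is precisely the standard proof one finds in the cited sources. The bookkeeping checks out: with $a$ and $b$ on one side of $e$ and $c$ and $d$ on the other, while $e'$ separates $\{a,d\}$ from $\{b,c\}$, your lists of which arc types cross $e$ and which cross $e'$ are right, the Jordan-curve argument forcing $x_{ac}\,x_{bd}=0$ is right, and the case $x_{bd}=0$ computation gives $x_{ab}+x_{ac}+x_{cd}=\calL(e')$ as claimed, with the other case following by the relabelling symmetry. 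You also correctly identify the one nontrivial input --- that $\calL$ can be put in normal position so that its trace in the square consists only of the six arc types, with no closed components and no arc returning to the same side --- and your fallback of verifying the identity on (weighted) multicurves and extending by homogeneity, continuity and density of rational points in $\ML(S)$ is a legitimate way to avoid invoking the full normal-position theory for laminations.
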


Thus there is a piecewise-linear function which transforms $\calT$ coordinates to $\calT'$ coordinates. We express this piecewise-linear function using two collections of matrices $\{A_i\}$ and $\{B_i\}$.

\begin{lemma}
\label{lem:encoding_LP}
Suppose that $p$ is a path from $\calT$ to $\calT'$. There are matrices $\{A_i\}$ and $\{B_i\}$, as described in \cite[Section~2.2]{BellReducibility}, such that:
\begin{enumerate}
\item Each $A_i$ and $B_i$ is $\ell(p)$--bounded.
\item Each $B_i$ has $O(\ell(p))$ rows.
\item For each measured lamination $\calL \in \ML(S)$ we have that $B_i \cdot \calT(\calL) \geq 0$ for some $i$.
\item For each measured lamination $\calL \in \ML(S)$, if $B_i \cdot \calT(\calL) \geq 0$ then $\calT'(\calL) = A_i \cdot \calT(\calL)$. \qed
\end{enumerate}
\end{lemma}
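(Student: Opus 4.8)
The plan is to induct on $\ell(p)$, reconstructing the encoding of \cite[Section~2.2]{BellReducibility}. For the base case $\ell(p) = 0$ we have $\calT = \calT'$, and the single pair $A_1 = I_\zeta$, $B_1 = 0$ (so that $B_1 \cdot v \geq 0$ holds vacuously) does the job: its entries are $O(1)$--bounded, it has $O(1)$ rows, and conditions~(3) and~(4) are immediate.

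For the inductive step I would write $p = q \cdot f$, where $q$ is a path from $\calT$ to some intermediate triangulation $\calT''$ with $\ell(q) = \ell(p) - 1$ and $f$ is a single flip or reordering carrying $\calT''$ to $\calT'$, and apply the inductive hypothesis to $q$ to obtain $\{A_j'\}$, $\{B_j'\}$. If $f$ is a reordering then $\calT'(\calL) = P \cdot \calT''(\calL)$ for a fixed permutation matrix $P$, and it suffices to take $A_j = P A_j'$ and $B_j = B_j'$: the digit counts and row counts are unchanged and conditions~(3),~(4) are inherited directly. If $f$ flips an edge $e$ to an edge $e'$, then by Proposition~\ref{prop:flip_intersection_laminations} the map $\calT''(\calL) \mapsto \calT'(\calL)$ is the identity in every coordinate except the $e$--th, whose new value is $\max(\calL(a) + \calL(c), \calL(b) + \calL(d)) - \calL(e)$. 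I would encode this two--piece piecewise--linear map by $O(1)$--bounded matrices: let $C^{+}$ (resp.\ $C^{-}$) be the identity with its $e$--th row replaced by the functional $\calL \mapsto \calL(a) + \calL(c) - \calL(e)$ (resp.\ $\calL(b) + \calL(d) - \calL(e)$), and let $D^{+}$ (resp.\ $D^{-}$) be the single row expressing $(\calL(a) + \calL(c)) - (\calL(b) + \calL(d)) \geq 0$ (resp.\ its negation). For any measured lamination $\calL$ at least one sign $s \in \{+,-\}$ satisfies $D^{s} \cdot \calT''(\calL) \geq 0$, and for each such $s$ one has $\calT'(\calL) = C^{s} \cdot \calT''(\calL)$, the two choices agreeing on the overlap since there the two bracketed quantities are equal.

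I would then combine the pieces: for each old index $j$ and each sign $s$, set $A_{j,s} = C^{s} A_j'$ and let $B_{j,s}$ be $B_j'$ with the single extra row $D^{s} A_j'$ appended. If $B_{j,s} \cdot \calT(\calL) \geq 0$ then in particular $B_j' \cdot \calT(\calL) \geq 0$, so $\calT''(\calL) = A_j' \cdot \calT(\calL)$ by~(4) for $q$, whence $D^{s} \cdot \calT''(\calL) = D^{s} A_j' \cdot \calT(\calL) \geq 0$ and therefore $\calT'(\calL) = C^{s} \cdot \calT''(\calL) = A_{j,s} \cdot \calT(\calL)$; this is~(4) for $p$. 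Conversely, given $\calL$, pick $j$ with $B_j' \cdot \calT(\calL) \geq 0$, so $\calT''(\calL) = A_j' \cdot \calT(\calL)$, then pick $s$ with $D^{s} \cdot \calT''(\calL) \geq 0$; then $B_{j,s} \cdot \calT(\calL) \geq 0$, which gives~(3). For the size bounds: each of $C^{s}$ and $D^{s}$ has $O(1)$--bounded entries with a bounded number of nonzeros per row, so the entries of $A_{j,s}$ and $B_{j,s}$ have at most $O(1)$ more digits than those of $A_j'$, $B_j'$; iterating over the $\ell(p)$ steps keeps them $O(\ell(p))$--bounded, and since each flip appends exactly one row, $B_{j,s}$ has $O(\ell(p))$ rows.

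The only genuine subtlety I anticipate is the bookkeeping of the two growth estimates — confirming that the digit count and the row count each increase by only an additive constant per step, so that both remain linear in $\ell(p)$ — together with the observation that, although the number of linear pieces can double at each flip and hence be exponential in $\ell(p)$, this is harmless because the statement bounds only the size and the row count of each individual $A_i$, $B_i$, not how many of them there are. One also needs the (easy) point that the piecewise description is consistent on overlaps of adjacent pieces, which is exactly the single--valuedness of the $\max$ in Proposition~\ref{prop:flip_intersection_laminations}.
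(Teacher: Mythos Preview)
Your argument is correct and is precisely the construction the paper is invoking: the lemma is stated with a terminal \qed{} and a citation to \cite[Section~2.2]{BellReducibility}, so the paper gives no proof of its own beyond that reference, and the inductive encoding you describe---one $O(1)$--bounded linear piece per reordering, two per flip, composed along the path with one extra selector row appended at each flip---is exactly what that section sets up. Your bookkeeping is sound: since each $C^s$ and $D^s$ has entries in $\{-1,0,1\}$ with at most four nonzeros per row, the entry magnitudes grow by at most a factor of four per step, giving at most $\ell(p)\log_{10}4 < \ell(p)$ digits, and the row count grows by one per flip.
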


\begin{remark}
\label{rem:compute_image}
Suppose that $p$ is a path from $\calT$ to $\calT'$ and that $\{A_i\}$ and $\{B_i\}$ are the matrices of Lemma~\ref{lem:encoding_LP}. If $v$ is a vector in which each entry has at most $d$ digits then by considering each flip in turn we can find $B_i$ such that $B_i \cdot v \geq 0$ in at most $O(d \ell(p) + \ell(p)^2)$ operations. Moreover each entry of $A_i \cdot v$ has at most $d + \ell(p)$ digits.
\end{remark}

\section{Train tracks}
\label{sec:train_tracks}

We begin with a discussion of train tracks \cite{PH}. These will be crucial for determining if a measured lamination is filling. To ease notation, throughout this section we will assume that $h \in X^*$ is a fixed word and that $\calL \in \ML(S)$ is a measured lamination such that:
\[ h(\calL) = \lambda \cdot \calL. \]

\begin{definition}[{\cite[Section~1.1]{PH}}]
A \emph{measured train track (representing $\calL$)} is a pair $T = (\tau, \mu)$ consisting of:
\begin{itemize}
\item a \emph{train track} $\tau$, that is, a trivalent graph on $S$ (whose vertices we refer to as \emph{switches} and edges we refer to as \emph{branches}) with a well defined tangency at each switch such that there is no switch with all branches emanating from it in the same direction and no complementary region of $\tau$ is a nullogon, monogon, bigon, once-marked nullogon or annulus, and
\item a \emph{transverse measure} $\mu$ such that there is a smooth map $\phi \from S \to S$, isotopic to the identity, such that:
\begin{itemize}
\item $\phi(\calL) = \tau$,
\item $\phi|_{\calL} \from \calL \to \tau$ is a submersion, and
\item $\mu = \calL \circ \phi^{-1}$
\end{itemize}
\end{itemize}
\end{definition}

\begin{definition}
If $T = (\tau, \mu)$ is a measured train track then we may \emph{split} along along one of its branches $e$ to obtain a new measured train track $T' = (\tau', \mu')$ as shown in Figure~\ref{fig:splitting_branch} \cite[Page~119]{PH}.
\end{definition}

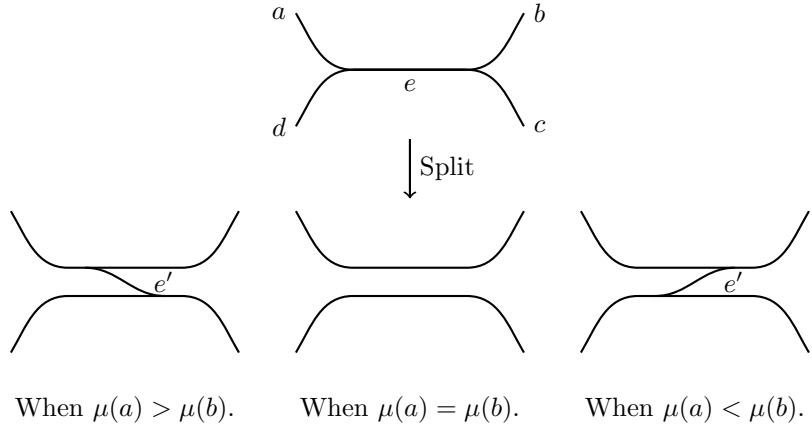
\begin{figure}[ht]
\centering
\begin{tikzpicture}[scale=0.75,thick]
\coordinate (A) at (-1,0);
\coordinate (B) at (1,0);
\coordinate (W) at (-2,1);
\coordinate (X) at (2,1);
\coordinate (Y) at (-2,-1);
\coordinate (Z) at (2,-1);
\coordinate (x) at ($(A)!0.15!(B)$);
\coordinate (y) at ($(A)!0.85!(B)$);
\coordinate (z) at ($(A)!0.5!(B)$);

\draw (A) -- node[below] {$e$} (B);
\draw (W) node [left] {$a$} to [out=-60,in=180] (A) to [out=0,in=180] (B) to [out=0,in=180+60] (X) node [right] {$b$};
\draw (Y) node [left] {$d$} to [out=+60,in=180] (A) to [out=0,in=180] (B) to [out=0,in=180-60] (Z) node [right] {$c$};

\coordinate (ysep) at (0,0.5);
\foreach \x / \y in {-5/-4, 0/-4, 5/-4}
{
  \draw
    ($(W) + (ysep) + (\x, \y)$) to [out=-60,in=180] 
    ($(A) + (ysep) + (\x, \y)$) to [out=0,in=180] 
    ($(B) + (ysep) + (\x, \y)$) to [out=0,in=180+60]
    ($(X) + (ysep) + (\x, \y)$);
  \draw 
    ($(Y) + (\x, \y)$) to [out=+60,in=180] 
    ($(A) + (\x, \y)$) to [out=0,in=180] 
    ($(B) + (\x, \y)$) to [out=0,in=180-60] 
    ($(Z) + (\x, \y)$);
}

\draw ($(x) + (ysep) + (-5,-4)$) to [out=0, in=180] node [right, shift={(0.25,0)}] {$e'$} ($(y) + (-5,-4)$);
\draw ($(x) + (5,-4)$) to [out=0, in=180] node [right, shift={(0.25,0)}] {$e'$} ($(y) + (ysep) + (5,-4)$);
\node at ($(z) + (-5,-6)$) {When $\mu(a) > \mu(b)$.};
\node at ($(z) + (0,-6)$) {When $\mu(a) = \mu(b)$.};
\node at ($(z) + (5,-6)$) {When $\mu(a) < \mu(b)$.};

\coordinate (S) at ($(Y)!0.5!(Z)$);
\coordinate (E) at ($(S)+(0,-1.5)$);
\draw [->] ($(S)!0.15!(E)$) -- node [right] {Split} ($(E)!0.15!(S)$);
\end{tikzpicture}
\caption{The possibilities for splitting a branch $e$. In either case $\mu(e')
= |\mu(a) - \mu(b)| = |\mu(c) - \mu(d)|$ \cite[Figure~2.1.2]{PH}.}
\label{fig:splitting_branch}
\end{figure}

There are many different measured train tracks that represent $\calL$. Again we represent these via a simplicial graph $\GG$, the \emph{graph of measured train tracks (representing $\calL$)}. This is a graph whose vertices are measured train tracks (representing $\calL$) and there is an (unoriented) edge from $T$ to $T'$ if and only if $T'$ can be obtained by splitting some collection of the branches of $T$, each of which has the same transverse measure. This graph is connected \cite[Theorem~2.8.5]{PH} and we write $d(T, T')$ for the distance between two measured train tracks when each edge of this graph is assigned length one.

\subsection{Maximal splittings and the axis}

\begin{definition}[{\cite[Section~2]{A}}]
The \emph{maximal splitting} of a measured train track $T$ is the measured train track $s(T)$ obtained by simultaneously splitting all branches of maximal measure. 
\end{definition}

\begin{lemma}
\label{lem:non_expansive}
If $T$ and $T'$ are measured train tracks (representing $\calL$) then
\[ d(s(T), s(T')) \leq d(T, T'). \]
That is, $s$ is a non-expansive map.
\end{lemma}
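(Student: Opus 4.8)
The plan is to reduce the statement to the case of adjacent train tracks and then run a short case analysis driven by the measures of the branches that get split. Since $d$ is the path metric on the connected graph $\GG$, it suffices to prove that $d(s(T), s(T')) \leq 1$ whenever $d(T, T') = 1$: if $T = T_0, T_1, \ldots, T_n = T'$ is a geodesic with $n = d(T, T')$, then the triangle inequality gives
\[ d(s(T), s(T')) \leq \sum_{i = 1}^{n} d\bigl(s(T_{i-1}), s(T_i)\bigr) \leq n = d(T, T'). \]
So assume, after possibly interchanging $T$ and $T'$, that $T'$ is obtained from $T$ by splitting a set $E$ of branches all sharing a common transverse measure $w$; each branch of $E$ is necessarily a large branch. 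Let me write $\operatorname{split}_E(X)$ for the measured train track obtained from $X$ by splitting a set $E$ of branches, and $B(X)$ for the set of large branches of $X$ of maximal measure, so that $s(X) = \operatorname{split}_{B(X)}(X)$. Throughout I may assume every branch carries positive measure.

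The observation I would isolate first is purely local: \emph{if $e$ is a large branch then every branch sharing a switch with $e$ has strictly smaller measure}. Indeed, at either endpoint of $e$ the switch condition, together with $e$ lying alone on the heavy side, forces $\mu(e)$ to equal the sum of the measures of the two other branches there, each of which is therefore smaller. I would record three consequences: distinct large branches never share a switch; hence any set of equal-measure large branches --- in particular $B(X)$ --- is split by moves supported in pairwise disjoint neighbourhoods, so these moves commute and assemble into a single simultaneous split, and in particular $d(X, s(X)) \leq 1$ for every $X$; and, using the splitting rule of Figure~\ref{fig:splitting_branch}, the branch $e'$ created by splitting $e \in E$ has measure $|\mu(a) - \mu(b)| < \mu(e)$.

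Now let $m$ be the maximal measure of a branch of $T$, so $w \leq m$, and distinguish three cases. If $w < m$, then by the local observation $E$ is disjoint from and non-adjacent to every measure-$m$ branch of $T$; hence splitting $E$ leaves these branches and their largeness untouched and introduces only branches of measure $< m$, so $B(T') = B(T)$. Commuting the split along $E$ past the split along $B(T)$ then gives $s(T') = \operatorname{split}_E(s(T))$, where $E$ still consists of equal-measure branches, so $d(s(T), s(T')) \leq 1$. If $w = m$ then $E \subseteq B(T)$; if moreover $E \subsetneq B(T)$, the same non-adjacency (and the fact that no new measure-$m$ large branch can appear) gives $B(T') = B(T) \setminus E$, and combining the two splits yields $s(T') = \operatorname{split}_{B(T) \setminus E}\bigl(\operatorname{split}_E(T)\bigr) = \operatorname{split}_{B(T)}(T) = s(T)$. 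Finally, if $E = B(T)$ then $T' = s(T)$, so $s(T') = s^2(T)$ and $d(s(T), s^2(T)) \leq 1$ by the remark above. In every case $d(s(T), s(T')) \leq 1$, which proves the reduced statement and hence the lemma.

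I expect the main obstacle to be the bookkeeping in the case analysis: checking precisely that $B(\cdot)$ is preserved, or changes exactly as claimed, under a split supported away from it --- in particular that no new large branch of maximal measure is created --- and making rigorous the claim that splits along pairwise non-adjacent large branches commute with one another and with simultaneous splitting. Both follow from the local ``heavy branch'' observation, but packaging them cleanly, perhaps as a short preliminary lemma about splits supported in a neighbourhood of a single branch, is where the genuine work lies; the metric reduction at the start is routine.
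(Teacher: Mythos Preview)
Your proof is correct and follows essentially the same approach as the paper: reduce to $d(T,T')=1$, assume without loss of generality that $T'$ is obtained from $T$ by splitting a set of equal-measure branches, and then run exactly the same three-case analysis (your cases $w<m$, $E\subsetneq B(T)$, $E=B(T)$ are precisely the paper's cases $B\nsubseteq M$, $B\subset M$, $B=M$). The paper's write-up is terser---it simply asserts that $M$ and $B$ ``lie in disjoint open sets on $S$'' where you spell out the local switch-condition argument that large branches are pairwise non-adjacent---but the content is the same.
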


\begin{proof}
Begin by considering the case in which $d(T, T') = 1$. Without loss of generality, there is a subset $B$ of the branches of $T$ such that by splitting the branches in $B$ we obtain $T'$ and each branch in $B$ has the same transverse measure. Let $M$ be the subset of the branches of $T$ with maximal transverse measure. There are three possible cases to consider:
\begin{enumerate}
\item If $B = M$ then $s(T) = T'$ and so $d(s(T), s(T')) = d(T', s(T')) \leq 1$.
\item If $B \subset M$ then the branches in $M - B$ also appear in $T'$. It is these branches that are split when maximally splitting $T'$ and so $s(T) = s(T')$. Hence $d(s(T), s(T')) = 0$.
\item If $B \nsubseteq M$ then the branches in $B$ also appear in $s(T)$. Splitting these branches results in $s(T')$ as $M$ and $B$ lie in disjoint open sets on $S$. Hence $d(s(T), s(T')) = 1$.
\end{enumerate}
In any case $d(s(T), s(T')) \leq 1$ and so the result holds by induction on $d(T, T')$.
\end{proof}

Agol showed that the sequence of train tracks obtained by repeatedly performing maximal splittings is eventually periodic (up to rescaling).

\begin{theorem}[{\cite[Theorem~3.5]{A}}]
\label{thrm:maximal_splits_preperiodic}
If $T$ is a measured train track then there exists $m, n \in \NN$ such that $s^{m+n}(T) = \widehat{h}(s^n(T))$ where
\[ \widehat{h}(\tau, \mu) \defeq (h(\tau), \mu / \lambda). \inlineQED \]
\end{theorem}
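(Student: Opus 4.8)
The plan is to promote the \emph{combinatorial} eventual periodicity of the splitting sequence (which comes from a counting argument) to the equivariant statement above, using three ingredients: equivariance of $s$, non-expansiveness of $s$ (Lemma~\ref{lem:non_expansive}), and finiteness of the set of train tracks on $S$ up to the action of $\Mod(S)$. Write $T_i \defeq s^i(T)$; every $T_i$ represents $\calL$, and the goal is to produce $m, n$ with $T_{m+n} = \widehat{h}(T_n)$.

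\emph{Equivariance.} First I would check that $s \circ \widehat{h} = \widehat{h} \circ s$. Since $h$ is a homeomorphism of $S$ and rescaling the transverse measure by $1/\lambda$ does not change which branches carry the maximal weight, $h$ restricts to a bijection from the maximal-weight branches of $T$ to those of $\widehat{h}(T)$; as splitting a branch commutes with applying a homeomorphism, the identity follows, and hence $s^k \circ \widehat{h} = \widehat{h} \circ s^k$ for all $k$. In particular $\bigl(\widehat{h}(T_i)\bigr)_i$ is the maximal splitting sequence started from $\widehat{h}(T_0)$.

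\emph{Finiteness and the repeated configuration.} The prohibition on nullogon, monogon and bigon complementary regions bounds the number of branches of a train track on $S$ in terms of $-\chi(S)$, so there are only finitely many train tracks on $S$ up to the action of $\Mod(S)$; moreover each vertex of $\GG$ has finite valence, so balls in $\GG$ are finite. Since $T_0$ and $\widehat{h}(T_0)$ both represent $\calL$, the graph $\GG$ is connected and $D \defeq d(T_0, \widehat{h}(T_0)) < \infty$; applying Lemma~\ref{lem:non_expansive} $i$ times together with equivariance gives $d(T_i, \widehat{h}(T_i)) \leq D$ for every $i$. Thus the ``decorated'' datum consisting of the isomorphism type of $T_i$ together with a combinatorial splitting/unsplitting path of length at most $D$ realising $d(T_i, \widehat{h}(T_i))$ takes only finitely many values, and the pigeonhole principle yields $i < j$ and a homeomorphism $g$ of $S$ carrying this datum for $i$ to that for $j$. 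Then $g(T_i) = T_j$ and $g\bigl(\widehat{h}(T_i)\bigr) = \widehat{h}(T_j) = \widehat{h}\bigl(g(T_i)\bigr)$, so $g$ commutes with $\widehat{h}$ on $T_i$.

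\emph{Closing the argument, and the main obstacle.} The relation $g\widehat{h} = \widehat{h}g$ on $T_i$ forces a suitable commutator of $g$ and $h$ to fix the measured train track $T_i$, hence to fix $\calL$ and so (for the filling lamination $\calL$ at hand) to have stretch factor $1$, i.e.\ to be of finite order; as the normaliser of the pseudo-Anosov $h$ in $\Mod(S)$ is virtually cyclic, $g$ agrees, up to finite order, with a power $\widehat{h}^{\,k}$. Passing to a further iterate to absorb the finite-order ambiguity, one extracts indices with $\widehat{h}(T_n) = T_{n+m}$, and equivariance propagates this to the stated identity $s^{m+n}(T) = \widehat{h}(s^n(T))$. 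The step I expect to be the real obstacle is exactly this last one: turning the coarse periodicity (up to $\Mod(S)$ and up to finite order) into the exact equality, which requires controlling the transverse measures --- a repeated combinatorial type does not by itself force the measures to repeat --- and invoking the structure of the stabiliser of $[\calL]$. An alternative route tracks the projectivised measured train tracks $[T_i]$ directly, combining a convergent subsequence (by compactness) with the discreteness forced by the algebraic nature of the dilatation $\lambda$; in either approach the bookkeeping around the measures is the crux.
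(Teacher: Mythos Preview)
The paper does not prove this theorem; it is quoted from Agol \cite[Theorem~3.5]{A} and closed with a QED symbol. There is therefore no argument in the paper for your proposal to be compared against.

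Evaluated on its own, your first two ingredients are correct and are indeed the standard opening moves: $s$ commutes with $\widehat{h}$, and Lemma~\ref{lem:non_expansive} gives the uniform bound $d(T_i,\widehat{h}(T_i))\le D$. The gap you flag, however, is genuine and not mere bookkeeping. Pigeonhole on the decorated combinatorial data yields $g\in\Mod(S)$ with $g(\tau_i)=\tau_j$ as \emph{train tracks}; your next sentence asserts $g(T_i)=T_j$ as \emph{measured} train tracks, which already presupposes $g_*\calL=\calL$ projectively---something nothing so far establishes. Your proposed repair then invokes the virtually cyclic normaliser of a pseudo-Anosov $h$ and the filling of $\calL$, but in this paper those are exactly the properties the theorem is being used to \emph{test}: the proof of Lemma~\ref{lem:filling_axis} applies the present result to a lamination that is explicitly \emph{not} filling, so assuming pseudo-Anosov here would be circular for the paper's purposes. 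What is missing is a mechanism, independent of the stabiliser of $[\calL]$, that forces the transverse measures (not just the combinatorics) to become eventually $\widehat{h}$-periodic; Agol obtains this by showing that every branch is eventually split (cf.\ \cite[Lemma~2.1]{A}, used here as Lemma~\ref{lem:axis_split_every_branch}), after which the combinatorial transition over one period determines the measure up to scale.
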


We refer to the smallest such $m$ and $n$ as the \emph{periodic} and \emph{preperiodic lengths} of $T$ respectively. We note that $m$ depends only on $\calL$ and is independent of $T$.

\begin{definition}
The \emph{axis} of $\calL$ is the bi-infinite sequence of measured train tracks $A = A(\calL) \defeq \{\mathbf{T}_i\}_{i=-\infty}^\infty$ such that 
\[ s(\mathbf{T}_i) = \mathbf{T}_{i+1} \inlineand \widehat{h}(\mathbf{T}_i) = \mathbf{T}_{i+m}. \]
\end{definition}

The measured train tracks on the axis are useful as you can determine if $\calL$ is filling purely from the combinatorics of their underlying train tracks.

\begin{definition}
A measured train track $T = (\tau, \mu)$ is \emph{filling} if every complementary region of $\tau$ is either a disk or a once-marked disk.
\end{definition}

\begin{lemma}
\label{lem:filling_axis}
The measured lamination $\calL$ is filling if and only if $\mathbf{T}_i$ is.
\end{lemma}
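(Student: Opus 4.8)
The plan is to establish both implications by relating the complementary regions of $\calL$ to those of the underlying train track of $\mathbf{T}_i$. Fix an index $i$ and write $\mathbf{T}_i = (\tau, \mu)$. Since $\mathbf{T}_i$ represents $\calL$, the collapsing map $\phi$ satisfies $\phi(\calL) = \tau$ and $\mu = \calL \circ \phi^{-1}$, so $\mu$ is strictly positive on every branch of $\tau$. Equivalently, $\calL$ lies in a fibered neighbourhood $N(\tau)$ of $\tau$ and meets every tie of $N(\tau)$.

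For the implication ``$\mathbf{T}_i$ filling $\Rightarrow \calL$ filling'' I would argue by contradiction: let $\gamma$ be an essential simple closed curve to which $\calL$ assigns measure zero, and isotope $\gamma$ to be disjoint from $\calL$. The key claim is that $\gamma$ can then be isotoped out of $N(\tau)$ and into a complementary region of $\tau$. This uses the positivity of $\mu$: since $\mu$ is positive on every branch, $\calL$ meets every tie, and moreover no gap of $\calL$ can run all the way around a sub-loop of $\tau$ --- the switch conditions would force a positive amount of the transverse measure to flow around that loop --- so every component of $N(\tau) \setminus \calL$ is a disk, across which $\gamma$ may be pushed to $\partial N(\tau)$ and out. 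But the complementary region into which $\gamma$ is then pushed is a disk or a once-marked disk, whose interior contains no essential simple closed curve, so $\gamma$ is inessential, a contradiction; hence $\calL$ is filling.

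For the converse, suppose $\mathbf{T}_i$ is not filling, so some complementary region $R$ of $\tau$ is neither a disk nor a once-marked disk. Because the definition of a train track already forbids $R$ from being a nullogon, monogon, bigon, once-marked nullogon or annulus, the region $R$ must have positive genus, at least two marked points, or at least two boundary curves. In each case $R$ contains a simple closed curve $\gamma$ essential in $S$ --- a non-separating curve when $R$ has genus, and otherwise a curve enclosing the appropriate marked points or separating the boundary curves of $R$; here one uses the standing hypotheses on $S$, and this is routine once one knows the standard structure of complementary regions of train tracks \cite{PH, CB}. As $R$ is disjoint from $N(\tau) \supseteq \calL$, the curve $\gamma$ is disjoint from $\calL$, so $\calL$ assigns it measure zero and is therefore not filling.

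The main obstacle is the claim used in the first implication: that an essential simple closed curve disjoint from $\calL$ can always be isotoped out of $N(\tau)$, i.e. that $\calL$ hides no essential curve inside its carrying neighbourhood. This rests on the positivity of $\mu$ on every branch together with the train-track hypothesis that $\tau$ has no annular complementary region, and is where one needs the structure theory of fibered neighbourhoods from \cite{PH}; some care is also needed in degenerate situations, such as when $\calL$ has a closed leaf. Everything else is elementary bookkeeping: disks and once-marked disks visibly contain no essential simple closed curve, and the larger complementary regions of a train track visibly do.
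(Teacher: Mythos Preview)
Your argument is correct in outline but takes a genuinely different route from the paper for the harder implication (``$\calL$ not filling $\Rightarrow$ $\mathbf{T}_i$ not filling''). You work directly with the fibered neighbourhood $N(\tau)$ of the specific track $\tau$ underlying $\mathbf{T}_i$: using that $\mu > 0$ on every branch, you argue (citing the Penner--Harer structure theory) that the complementary regions of $\calL$ match those of $\tau$, so an essential curve disjoint from $\calL$ can be pushed into a complementary region of $\tau$. The paper instead sidesteps this entirely by a dynamical argument: it chooses \emph{some other} measured train track $T$ representing $\calL$ whose underlying track is disjoint from $N(\gamma)$, then invokes Agol's preperiodicity (Theorem~\ref{thrm:maximal_splits_preperiodic}) to write $\mathbf{T}_i = \widehat{h}^k(s^j(T))$ and observes that ``not filling'' is preserved under maximal splittings and homeomorphisms. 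Your approach has the advantage of applying to any fully-supported carrying track, not just those on the axis, and of not needing the axis machinery at all; the paper's approach avoids the fibered-neighbourhood analysis you correctly flag as the main obstacle (including the closed-leaf subtlety) at the cost of using the preperiodicity theorem, which is already available in the paper's setup. For the other implication both arguments are the same, though you are more explicit than the paper about why a non-disk, non-once-marked-disk complementary region of $\tau$ must contain an essential curve.
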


\begin{proof}
If $\mathbf{T}_i = (\tau, \mu)$ is not filling then there is an essential, simple closed curve $\gamma$ in the complement of $\tau$. Therefore $\calL(\gamma) = 0$ and so $\calL$ is not filling.

Conversely, if $\calL$ is not filling then there is an essential, simple closed curve $\gamma$ such that $\calL(\gamma) = 0$. There is a measured train track $T = (\tau, \mu)$, representing $\calL$, such that $\tau$ and $N(\gamma)$ are disjoint. By Theorem~\ref{thrm:maximal_splits_preperiodic} there are $j$ and $k$ such that 
$\widehat{h}^k(s^j(T)) = \mathbf{T}_i$. As $T$ is not filling and this is preserved by both maximal splittings and homeomorphisms, $\mathbf{T}_i$ is not filling either.
\end{proof}

\subsection{Getting to the axis}

We use Lemma~\ref{lem:filling_axis} to decide whether $\calL$ is filling by constructing a measured train track on the axis $A$. To do this we give an upper bound the periodic and preperiodic lengths of $T$.

\begin{theorem}
\label{thrm:periodic_bound}
The periodic length of $T$ is at most $d(T, \widehat{h}(T))$.
\end{theorem}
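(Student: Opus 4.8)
The plan is to reduce, via the non-expansiveness of $s$ and its compatibility with $\widehat h$, to the assertion that the relevant segment of the axis is a geodesic in the graph of measured train tracks. First I would observe that $s$ commutes with $\widehat h$: the map $(\tau,\mu)\mapsto(h(\tau),\mu/\lambda)$ is the composite of a homeomorphism, which carries branches to branches and preserves the combinatorics of $\tau$, with a uniform rescaling of $\mu$, which does not change which branches carry maximal measure; neither operation affects the maximal splitting, so $s\circ\widehat h=\widehat h\circ s$ and hence $s^k(\widehat h(T))=\widehat h(s^k(T))$ for all $k$. Let $n$ be the preperiodic length of $T$ and set $U=s^n(T)$, a train track on the axis, say $U=\mathbf T_j$. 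Applying Lemma~\ref{lem:non_expansive} $n$ times together with this commutation gives
\[ d(\mathbf T_j,\widehat h(\mathbf T_j))=d\big(s^n(T),s^n(\widehat h(T))\big)\leq d(T,\widehat h(T)). \]
Since $\widehat h(\mathbf T_j)=\mathbf T_{j+m}$, the theorem follows once I show $d(\mathbf T_j,\mathbf T_{j+m})\geq m$ (the reverse inequality is clear, and the case $m=0$ is trivial).

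For this I would establish the stronger statement that the axis is a geodesic, i.e.\ $d(\mathbf T_i,\mathbf T_{i+k})=k$ for all $i$ and all $k\geq 0$. The crucial point is that a single split strictly decreases the sorted tuple of branch weights: splitting a branch $e$ replaces it by a branch $e'$ with $\mu(e')=|\mu(a)-\mu(b)|$, and the switch conditions force $\mu(a),\mu(b)\leq\mu(e)$, with strictness away from degenerate zero-weight branches (which may be collapsed). In particular the $\mathbf T_i$ are pairwise distinct. Next I would classify the adjacencies between two axis train tracks $\mathbf T_a$, $\mathbf T_b$ with $a<b$: by the previous remark $\mathbf T_b$ is obtained from $\mathbf T_a$ by splitting some collection $B$ of equal-weight branches, and I would run through the three cases in the proof of Lemma~\ref{lem:non_expansive}, with $M$ the set of maximal-weight branches of $\mathbf T_a$. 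If $B=M$ then $\mathbf T_b=s(\mathbf T_a)=\mathbf T_{a+1}$. If $B\subsetneq M$ then $s(\mathbf T_a)=s(\mathbf T_b)$, so $\mathbf T_{a+1}=\mathbf T_{b+1}$, contradicting distinctness. If $B\cap M=\emptyset$ then, exactly as in that proof, the configuration propagates, so $\mathbf T_{i+d}$ is obtained from $\mathbf T_i$ by a non-maximal split for every $i\geq a$, where $d=b-a$; iterating the $d$-step along the axis then forces the scale-invariant quantity $w(T)/\!\sum_e\mu(e)$ (where $w$ is the maximal weight) — which $\widehat h$ leaves unchanged but which strictly increases under a non-maximal split — to be simultaneously equal to and strictly greater than its value at $\mathbf T_a$, upon taking the index $a+\ell$ with $\ell$ a common multiple of $d$ and $m$ so that $\mathbf T_{a+\ell}=\widehat h^{\ell/m}(\mathbf T_a)$. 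Hence adjacent axis train tracks have consecutive indices.

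Finally I would deduce the geodesic property. Given any geodesic from $\mathbf T_a$ to $\mathbf T_b$ in the graph, applying $s^K$ for $K$ large enough that every one of its finitely many vertices has reached the axis (by Theorem~\ref{thrm:maximal_splits_preperiodic}) yields, by Lemma~\ref{lem:non_expansive}, a path of no greater length lying entirely on the axis and running from $\mathbf T_{a+K}$ to $\mathbf T_{b+K}$; since consecutive vertices of such a path differ in index by at most one, the path, and hence the original geodesic, has length at least $|b-a|$. Combined with the first paragraph this proves $m=d(\mathbf T_j,\mathbf T_{j+m})\leq d(T,\widehat h(T))$. I expect the adjacency classification to be the main obstacle, and within it the exclusion of the ``chord'' case $B\cap M=\emptyset$: this is precisely where one needs a quantity that is monotone under splitting yet controlled by $\widehat h$, and the bookkeeping is somewhat delicate because maximal splitting is only weakly contracting on weights while $\widehat h$ rescales them uniformly.
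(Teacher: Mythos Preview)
Your overall route is the paper's: push a path from $T$ to $\widehat h(T)$ onto the axis via non-expansiveness of $s$ (together with $s\circ\widehat h=\widehat h\circ s$), and then use that a walk along the axis from $\mathbf T_{k'}$ to $\mathbf T_{k'+m}$ must have length at least $m$. The paper does exactly this in two sentences, simply asserting the last step; you correctly recognise that it amounts to showing the axis is a geodesic in $\mathcal G$, and you supply a proof via an adjacency classification that the paper leaves implicit.

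There is, however, a real gap in your treatment of the case $B\cap M=\emptyset$. You write that ``exactly as in that proof, the configuration propagates, so $\mathbf T_{i+d}$ is obtained from $\mathbf T_i$ by a non-maximal split for every $i\ge a$''; but case~3 of Lemma~\ref{lem:non_expansive} gives only one step of propagation---it says $\mathbf T_{b+1}$ is obtained from $\mathbf T_{a+1}$ by splitting the same set $B$, not that $B$ is still disjoint from the new maximal set. In fact it cannot remain disjoint forever: every branch of an axis train track is eventually split (Lemma~\ref{lem:axis_split_every_branch}), so $B$ becomes maximal after finitely many steps. The repair is easy---if $B\subseteq M_{a+k}$ at some step $k$ you are back in case~1 or~2 there, which you have already handled, and only under the (now vacuous) hypothesis of perpetual case~3 would your $w/\!\sum_e\mu(e)$ argument be needed. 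Indeed case~3 is impossible outright by a shorter observation: a non-maximal split leaves the maximum weight unchanged, so $w(\mathbf T_b)=w(\mathbf T_a)$; but $\mathbf T_b=s^{\,d}(\mathbf T_a)$ with $d\ge 1$, and a single maximal splitting strictly lowers the maximum weight (two maximal branches cannot share a switch when all weights are positive), so $w(\mathbf T_b)<w(\mathbf T_a)$---a contradiction without any propagation or scale-invariant quantity.
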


\begin{proof} 
Suppose that 
\[ p \defeq T_0, T_1, \ldots, T_k \]
is a \emph{path} from $T$ to $\widehat{h}(T)$, that is, a sequence such that $d(T_i, T_{i+1}) = 1$. Let $T_i^j \defeq s^{j}(T_i)$. Note that for each $j$:
\begin{itemize}
\item $\widehat{h}(T_0^j) = T_k^j$, and
\item $p_j \defeq T_0^{j}, \ldots, T_k^{j}$ is also a path as $s$ is non-expansive.
\end{itemize}

When $j$ is sufficiently large, by Theorem~\ref{thrm:maximal_splits_preperiodic} each $T_i^j$ lies on the axis $A$ and therefore so does $p_j$. The endpoints of $p_j$ must be $\mathbf{T}_{k'}$ and $\mathbf{T}_{k' + m}$ and so we conclude that $m \leq k$.

As this holds for every such path $p$, we have that the periodic length of $T$ is at most $d(T, \widehat{h}(T))$.
\end{proof}

To bound the preperiodic length, the following theorem is key. In particular, note that the constant depends only on the underlying surface and is independent of the chosen mapping class $h$ and measured lamination $\calL$.

\begin{theorem}[{\cite[Theorem~B]{jT}}]  
\label{thrm:linear_conjugator_bound}
There is a constant $K = K(S)$ such that if $T$ and $T'$ are measured train tracks then there is a measured train track $T''$ in the orbit of $T'$ under $h$ such that
\[ d(T, T'') \leq K(d(T, \widehat{h}(T)) + d(T', \widehat{h}(T'))). \inlineQED \]
\end{theorem}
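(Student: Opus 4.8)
I would organise the argument around the axis $A = A(\calL) = \{\mathbf{T}_i\}$ furnished by Agol (Theorem~\ref{thrm:maximal_splits_preperiodic}), writing $s(\mathbf{T}_i) = \mathbf{T}_{i+1}$ and $\widehat{h}(\mathbf{T}_i) = \mathbf{T}_{i+m}$ with $m$ the periodic length of $\calL$. Two easy observations are in place: since $h$ is a homeomorphism of $S$, the map $\widehat{h}$ induces a simplicial automorphism of the graph $\GG$ of measured train tracks representing $\calL$ and hence an isometry for $d$; and $d(\mathbf{T}_a, \mathbf{T}_b) \leq |a - b|$ because each maximal splitting is a single edge of $\GG$.

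The plan is to reduce the theorem to a \emph{distance-to-axis} estimate: there is $K_1 = K_1(S)$, depending only on the topological type of $S$, with $d(T, A) \leq K_1 \cdot d(T, \widehat{h}(T))$ for every measured train track $T$ representing $\calL$. Granting this, pick axis points $\mathbf{T}_i$ and $\mathbf{T}_j$ realising $d(T, A)$ and $d(T', A)$. The progression $\{\mathbf{T}_{j + km}\}$, over all integers $k$, meets every interval of length $m$ in $A$, so some $k$ has $|i - (j + km)| \leq m$; set $T'' \defeq \widehat{h}^k(T')$, which lies in the $h$--orbit of $T'$. Then, since $\widehat{h}^k$ is an isometry with $\widehat{h}^k(\mathbf{T}_j) = \mathbf{T}_{j + km}$,
\[ d(T, T'') \leq d(T, \mathbf{T}_i) + d(\mathbf{T}_i, \mathbf{T}_{j + km}) + d(\mathbf{T}_{j + km}, \widehat{h}^k(T')) \leq d(T, A) + m + d(T', A). \]
As $m \leq d(T, \widehat{h}(T))$ by Theorem~\ref{thrm:periodic_bound}, the distance-to-axis estimate applied to $T$ and to $T'$ gives $d(T, T'') \leq (K_1 + 1) \cdot d(T, \widehat{h}(T)) + K_1 \cdot d(T', \widehat{h}(T'))$, which is the assertion with $K = K_1 + 1$.

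Everything then rests on the distance-to-axis estimate, i.e.\ on $A$ being uniformly contracting in $\GG$; this is essentially Theorem~B of \cite{jT}. I would prove it by passing to the coarse geometry of $\GG$: the measured train tracks carrying $\calL$ are quasi-isometrically modelled, through Masur--Minsky subsurface projections and Rafi's thick--thin decomposition, on the thick part of the Teichm\"{u}ller geodesic determined by $\calL$ when $\calL$ fills, and on a product of such geodesics over the components of the canonical reduction of $h$ when it does not; in this model $\widehat{h}$ translates along the geodesic, whose Morse-ness yields the bound, with all constants controlled by $\xi(S)$ alone.

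The main obstacle is precisely this last estimate, and above all the reducible case. When $\calL$ does not fill, $A$ is not Morse in $\GG$ in any naive sense, the geodesic model degenerates into a product, and one must establish the contraction component-by-component and recombine the pieces via Minsky's product regions theorem, all while keeping the multiplicative constant independent of $h$. That uniform bookkeeping is the real work; the axis-sliding argument above is the comparatively routine packaging around it.
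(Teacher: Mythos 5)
This statement is not proved in the paper at all: it is quoted verbatim as Theorem~B of \cite{jT} and used as a black box, so the relevant question is whether your argument actually supplies a proof rather than a repackaging of the citation. It does not. Your outer layer --- reducing the theorem to a distance-to-axis estimate $d(T, A) \leq K_1 \cdot d(T, \widehat{h}(T))$ and then sliding along the axis by powers of $\widehat{h}$ --- is sound (the triangle inequality, the isometry of $\widehat{h}$ on $\GG$, and the bound $m \leq d(T, \widehat{h}(T))$ from Theorem~\ref{thrm:periodic_bound} all check out), but it is essentially a reformulation rather than a reduction: applying the theorem itself with $T'$ a point of $A$ recovers your distance-to-axis estimate up to a factor of $2$, so the two statements are equivalent. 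You then say of the key estimate that it ``is essentially Theorem~B of \cite{jT}'' --- which is precisely the statement you are supposed to be proving. The argument is therefore circular at its core.

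The remaining sketch (Masur--Minsky subsurface projections, Rafi's thick--thin description of the Teichm\"{u}ller geodesic of $\calL$, Minsky's product regions in the reducible case) is a reasonable description of the kind of machinery Tao's proof actually deploys, and you correctly identify the reducible case and the uniformity of the constant in $h$ as the genuine difficulties. But none of that is carried out: there is no construction of the quasi-isometric model for the train tracks carrying $\calL$, no contraction estimate, and no recombination argument. As written, the proposal establishes only that the theorem follows from itself plus routine axis bookkeeping. If your intent is to use the result as the paper does, the honest move is to cite \cite{jT} and stop; if your intent is to prove it, the entire content still lies ahead of you.
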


Agol also showed that for any measured train track $T$ (representing $\calL$), if we repeated perform maximal splittings then eventually every branch of $T$ split \cite[Lemma~2.1]{A}. For train tracks on the axis of $\calL$ we can give an explicit upper bound on the number of maximal splittings needed.

\begin{lemma}
\label{lem:axis_split_every_branch}
If $\mathbf{T}_{i}$ is a train track on the axis of $\calL$ then every branch of $\mathbf{T}_{i}$ must be split within $3 \zeta m$ maximal splittings.
\end{lemma}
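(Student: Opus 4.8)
The plan is to track one branch $e$ of $\mathbf{T}_i$ along the maximal splitting sequence and show that it cannot survive $3\zeta m$ steps, using the periodicity of the axis together with the monotonicity of transverse measures under maximal splitting. I will need two elementary observations. First, a branch that is \emph{not} among the branches split by $s$ keeps its transverse measure: splitting a branch $e_0$ replaces $e_0$ by the new branch of measure $|\mu(a) - \mu(b)|$ (see Figure~\ref{fig:splitting_branch}) and leaves the measures of all other branches unchanged, and since the branches of maximal measure are pairwise non-adjacent these simultaneous splits are supported on disjoint disks. Second, the maximal transverse measure strictly decreases under $s$: if $M$ is the maximal measure then every branch of measure $M$ is a large branch — a maximal-measure branch that were small at some switch would, by the switch condition, force a zero-measure branch there — so it is split and replaced by a branch of measure $|\mu(a) - \mu(b)| < M$, while any branch that is not split keeps its measure, which, being unsplit, must be $< M$. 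Comparing $\mathbf{T}_{i+m} = \widehat{h}(\mathbf{T}_i)$, whose maximal measure is $\lambda^{-1}$ times that of $\mathbf{T}_i$, with the effect of the $m \geq 1$ maximal splittings between them, this forces $\lambda > 1$.

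Next I will invoke the standard bound that a trivalent train track on $S$ has at most $3\zeta$ branches: if it has $b$ branches and $v$ switches then $2b = 3v$, its complementary regions carry $v$ cusps in total, and the Euler characteristic identity, combined with the hypothesis that none of these regions is a nullogon, monogon, bigon, once-marked nullogon or annulus, bounds $b$ by $3\zeta$. Writing $W_j$ for the set of distinct transverse measures occurring on the branches of $\mathbf{T}_j$, this gives $|W_j| \leq 3\zeta$ for every $j$.

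Finally I argue by contradiction: suppose some branch $e$ of $\mathbf{T}_i$ is not split within the first $3\zeta m$ maximal splittings. By the first observation, the descendant of $e$ is a branch of $\mathbf{T}_{i+j}$ of the same measure $\mu(e)$ for all $0 \leq j \leq 3\zeta m$, so in particular $\mu(e) \in W_{i+km}$ for $k = 0, 1, \dots, 3\zeta$. But $\mathbf{T}_{i+km} = \widehat{h}^{\,k}(\mathbf{T}_i)$, whose branch measures are precisely those of $\mathbf{T}_i$ rescaled by $\lambda^{-k}$, so $W_{i+km} = \lambda^{-k} W_i$ and therefore $\lambda^k \mu(e) \in W_i$ for $k = 0, 1, \dots, 3\zeta$. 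Since $\lambda > 1$, these are $3\zeta + 1$ distinct elements of $W_i$, contradicting $|W_i| \leq 3\zeta$; hence every branch of $\mathbf{T}_i$ is split within $3\zeta m$ maximal splittings. The step I expect to be the main obstacle is making the second observation precise — checking that the maximal measure genuinely drops at each step (dealing with zero-measure branches and with central splits), that branch measures therefore vary monotonically, and that this yields $\lambda > 1$ — and, relatedly, pinning down what the ``descendant'' of a branch is and why it retains its measure even when it abuts several simultaneously split branches; the branch count and the periodicity relation $\mathbf{T}_{i+km} = \widehat{h}^{\,k}(\mathbf{T}_i)$ are routine.
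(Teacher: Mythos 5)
Your argument is correct, but it takes a genuinely different route from the paper's. The paper defines $B_k$ to be the set of branches of $\mathbf{T}_i$ that are split within $km$ maximal splittings, uses periodicity to get the recursion $B_{k+1} = B_k \cup \widehat{h}(B_k)$, and then invokes Agol's Lemma~2.1 (every branch is eventually split) to rule out the $B_k$ stabilising before they exhaust the at most $3\zeta$ branches. You instead run a pigeonhole argument on the set $W_i$ of distinct branch measures: an unsplit branch keeps its measure, periodicity rescales all measures by $\lambda^{-1}$, so a branch surviving $3\zeta m$ splittings would place the $3\zeta+1$ distinct values $\lambda^k\mu(e)$ inside $W_i$. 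What this buys you is independence from Agol's Lemma~2.1 --- you effectively reprove a quantitative version of it for train tracks on the axis --- at the price of the auxiliary measure facts you flag, all of which do check out: every branch here has strictly positive measure (since $\phi(\calL)=\tau$), which is exactly what makes a maximal-measure branch large at each of its switches, prevents two maximal branches from sharing a switch (that would force a zero-measure branch), and gives the strict inequality $|\mu(a)-\mu(b)| < \mu(a)+\mu(b) = M$; and in a central split the two adjacent branches that get merged have equal measure, so the descendant of an unsplit branch really does retain its measure. One small remark: your deduction that $\lambda>1$ implicitly uses $m\geq 1$, which is harmless since the statement is vacuous otherwise, but note that the paper treats $\lambda>1$ as ambient in the pseudo-Anosov setting rather than deriving it from the splitting dynamics as you do.
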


\begin{proof}
Let $B_k$ be the set of branches of $\mathbf{T}_{i}$ that are split within $k m$ maximal splittings. Note that $B_{k+1} = B_k \cup \widehat{h}(B_k)$. Therefore if $B_k$ does not contain of all branches of $\mathbf{T}_{i}$ and $|B_{k+1}| = |B_k|$ then there would be a branch that is never split, which cannot happen \cite[Lemma~2.1]{A}. Hence either $B_k$ consists of all branches of $\mathbf{T}_{i}$ or $|B_{k+1}| > |B_k|$. As $\mathbf{T}_{i}$ has at most $3 \zeta$ branches, the latter case cannot occur when $k \geq 3 \zeta$. Thus every branch of $\mathbf{T}_{i}$ must be split within $3 \zeta m$ maximal splittings.
\end{proof}

We note that by this result every branch of $\mathbf{T}_{i}$ must become a branch of maximal transverse measure within $3 \zeta m$ maximal splittings.

\begin{theorem}
\label{thrm:preperiodic_bound}
The preperiodic length of $T$ is at most $6 \zeta K d(T, \widehat{h}(T))^2$.
\end{theorem}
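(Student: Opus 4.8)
The plan has two phases: first locate a measured train track on the axis $A = A(\calL)$ that is close to $T$, and then show that repeated maximal splitting drags $T$ onto $A$ at a controlled rate. Recall that the preperiodic length of $T$ is precisely the least $\ell$ for which $s^\ell(T)$ lies on $A$: if $s^\ell(T) = \mathbf{T}_p$ then $s^{m+\ell}(T) = \mathbf{T}_{p+m} = \widehat{h}(s^\ell(T))$, while conversely $s^n(T)$ lies on the axis it generates, which is $A$. Throughout write $d(\,\cdot\,, A) \defeq \min_i d(\,\cdot\,, \mathbf{T}_i)$.

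For the first phase I would apply Theorem~\ref{thrm:linear_conjugator_bound} with $T' \defeq \mathbf{T}_0$, obtaining a measured train track in the orbit of $\mathbf{T}_0$; as the distance in question is unaffected by rescaling the transverse measure and $\widehat{h}^k(\mathbf{T}_0) = \mathbf{T}_{km}$, we may take this to be a point $\mathbf{T}_j$ of the axis with
\[ d(T, \mathbf{T}_j) \leq K\bigl( d(T, \widehat{h}(T)) + d(\mathbf{T}_0, \widehat{h}(\mathbf{T}_0)) \bigr). \]
Now $d(\mathbf{T}_0, \widehat{h}(\mathbf{T}_0)) = d(\mathbf{T}_0, \mathbf{T}_m) \leq m$, since each maximal splitting splits a family of branches all of the same (namely, maximal) transverse measure and so is a single edge of $\GG$; and $m \leq d(T, \widehat{h}(T))$ by Theorem~\ref{thrm:periodic_bound}. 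Hence $d(T, \mathbf{T}_j) \leq 2K\, d(T, \widehat{h}(T))$.

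For the second phase I would prove, by induction on $r$, that if a measured train track $U$ (representing $\calL$) satisfies $d(U, A) \leq r$ then $s^{3\zeta m r}(U)$ lies on $A$. Since $s$ is non-expansive (Lemma~\ref{lem:non_expansive}) and $s(\mathbf{T}_i) = \mathbf{T}_{i+1}$, the function $\ell \mapsto d(s^\ell(U), A)$ is non-increasing, so the induction reduces to the base case $r = 1$: if $d(V, A) = 1$ then $s^t(V) \in A$ for some $t \leq 3\zeta m$. To see this, note that $V$ and the nearest axis train track $\mathbf{T}_c$ differ by splitting a single set $B$ of branches of equal transverse measure; because these measures are equal, each branch of $B$ is maximal in the pre-split track or none is, and the trichotomy in the proof of Lemma~\ref{lem:non_expansive} becomes: either $B$ is contained in the maximal set of the pre-split track, in which case one further maximal splitting lands on $A$; or $B$ is disjoint from it, in which case $B$ survives the maximal splitting unsplit and the same dichotomy recurs with the same $B$. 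In the surviving case $B$ (or, in the orientation where $\mathbf{T}_c$ is the more-split track, the short branches it introduces) is pinned to a branch of the axis train tracks $\mathbf{T}_c, \mathbf{T}_{c+1}, \dots$ that never becomes maximal; but by Lemma~\ref{lem:axis_split_every_branch} and the observation following it, every branch of $\mathbf{T}_c$ becomes maximal within $3\zeta m$ maximal splittings, so this case lasts at most $3\zeta m$ steps. Granting the base case, the inductive step follows by passing along a geodesic from $U$ to its nearest axis point with penultimate vertex $V$: then $d(s^t(U), A) \leq d(s^t(U), s^t(V)) \leq d(U, V) = r - 1$ for the $t \leq 3\zeta m$ produced above, and the inductive hypothesis applied to $s^t(U)$ puts $s^{3\zeta m r}(U)$ on $A$.

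Combining the two phases, the preperiodic length of $T$ is at most $3\zeta m \cdot d(T, \mathbf{T}_j) \leq 3\zeta m \cdot 2K\, d(T, \widehat{h}(T)) = 6\zeta K m\, d(T, \widehat{h}(T)) \leq 6\zeta K\, d(T, \widehat{h}(T))^2$, the last inequality again using Theorem~\ref{thrm:periodic_bound}. The main obstacle is the combinatorial bookkeeping in the second phase: checking that the defect set $B$ between $s^\ell(T)$ and the axis really does persist, unsplit, as a fixed branch (or pair of short branches) of the evolving axis train track — paying particular attention to the orientation in which the axis track is the more-split one, where $B$ must be traced through an unsplitting — so that the quantitative bound of Lemma~\ref{lem:axis_split_every_branch} applies. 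Everything else is a formal consequence of Theorems~\ref{thrm:periodic_bound} and~\ref{thrm:linear_conjugator_bound} together with Lemmas~\ref{lem:non_expansive} and~\ref{lem:axis_split_every_branch}.
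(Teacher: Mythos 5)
Your proposal is correct and follows essentially the same route as the paper: use Theorem~\ref{thrm:linear_conjugator_bound} (plus Theorem~\ref{thrm:periodic_bound}) to find an axis point within $2K\,d(T,\widehat{h}(T))$ of $T$, then induct on distance to the axis, with the distance-one case resolved by the same trichotomy from Lemma~\ref{lem:non_expansive} and the $3\zeta m$ bound of Lemma~\ref{lem:axis_split_every_branch}. The only difference is cosmetic — you run the induction from the near end of the geodesic while the paper runs it from the far end — and the case you flag as delicate (the axis track being the more-split one) is exactly the paper's third possibility, handled there by tracking the newly created branches $B'$.
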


\begin{proof}
We claim that if $\mathbf{T}_i$ is a measured train track on $A$ then the preperiodic length of a measured train track $T$ is at most $3 \zeta m d(T, \mathbf{T}_i)$. We prove this by induction on the distance. There is nothing to show in the base case when $d(T, \mathbf{T}_i) = 0$ so suppose that the claim is true whenever $d(T, \mathbf{T}_i) < k$. Now if $d(T, \mathbf{T}_i) = k$ then let $T'$ be such that
\[ d(T', T) = 1 \inlineand d(T', \mathbf{T}_i) = k - 1 \]
as shown in Figure~\ref{fig:getting_to_axis}. Then by induction the preperiodic length of $T'$ is at most $3 \zeta m(k-1)$ and so $\mathbf{T}_j \defeq s^{3 \zeta m(k-1)}(T')$ is on the axis $A$. For ease of notation let $T'' \defeq s^{3 \zeta m(k-1)}(T)$, then as $s$ is non-expansive (Lemma~\ref{lem:non_expansive})
\[ d(\mathbf{T}_j, T'') \leq 1. \]
There are now three possibilities that can occur.

The first possibility is that $d(\mathbf{T}_j, T'') = 0$ and so $T'' = \mathbf{T}_j$.

The second possibility is that $d(\mathbf{T}_j, T'') = 1$ and there is a subset $B$ of the branches of $\mathbf{T}_j$ such that every branch in $B$ has the same transverse measure and splitting along these yields $T''$. Now consider $s^p(T'')$ and $s^p(\mathbf{T}_j)$ where, by Lemma~\ref{lem:axis_split_every_branch}, we choose $p < 3 \zeta m$ such that the branches in $B$ appear in $s^p(\mathbf{T}_j)$ with maximal measure. If there are no other branches of maximal measure then maximally splitting $s^p(\mathbf{T}_j)$ results in $s^p(T'')$. Hence $s^{p}(T'') = \mathbf{T}_{j+p+1}$. Otherwise maximally splitting $s^p(\mathbf{T}_j)$ factors through $s^p(T'')$ and so $s^{p+1}(T'') = \mathbf{T}_{j+p+1}$.

The third possibility is that $d(\mathbf{T}_j, T'') = 1$ and there is a subset $B$ of the branches of $T''$ such that splitting along these yields $\mathbf{T}_j$. In this case let $B'$ be new the branches that are added when the branches in $B$ are split. Again consider $s^p(T'')$ and $s^p(\mathbf{T}_j)$ where, by Lemma~\ref{lem:axis_split_every_branch}, we choose $p < 3 \zeta m$ such that the branches in $B'$ appear in $s^p(\mathbf{T}_j)$ with maximal measure. Now in order for $d(s^{p+1}(\mathbf{T}_j), s^{p+1}(T'')) \leq 1$ we must have already split the branches in $B$. Hence $s^{p+1}(T'')$ is must be either $\mathbf{T}_{j+p+1}$ or $\mathbf{T}_{j+p}$.

In any case, it follows that $s^{3 \zeta mk}(T) = s^{3 \zeta m}(T'')$ is either:
\[ \mathbf{T}_{j+3 \zeta m - 1}, \quad \mathbf{T}_{j+3 \zeta m} \inlineor \mathbf{T}_{j+3 \zeta m + 1} \]
and so is on the axis $A$. Hence the claim is true.

Now by Theorem~\ref{thrm:linear_conjugator_bound} there is a $\mathbf{T}_i$ on the axis $A$ such that
\[ d(T, \mathbf{T}_i) \leq K (d(T, \widehat{h}(T)) + d(\mathbf{T}_i, \widehat{h}(\mathbf{T}_i))) = K (d(T, \widehat{h}(T)) + m). \] 
Therefore by the previous claim and Theorem~\ref{thrm:periodic_bound} the preperiodic length of $T$ is at most
\[ 3 \zeta m K (d(T, \widehat{h}(T)) + m) \leq 6 \zeta K d(T, \widehat{h}(T))^2. \qedhere \]
\end{proof}

\begin{figure}[ht]
  \centering
  \tikzset{->-/.style={decoration={
  markings,
  mark=at position #1 with {\arrow{>}}},postaction={decorate}}}

\begin{tikzpicture}[scale=1]
\coordinate (T) at (-2,3);
\coordinate (T2) at (-2,2);
\coordinate (T22) at ( 1,1);
\coordinate (Ti) at (-1,0);
\coordinate (Tj) at ( 1,0);
\coordinate (join) at (3, 0);
\coordinate (merged) at (4.5, 0);

\node [draw,shape=circle,fill=black,scale=0.4, label=left:$T$] at (T) {};
\node [draw,shape=circle,fill=black,scale=0.4, label=left:$T'$] at (T2) {};
\node [draw,shape=circle,fill=black,scale=0.4, label=above:$T''$] at (T22) {};
\node [draw,shape=circle,fill=black,scale=0.4, label=below:$\mathbf{T}_i$] at (Ti) {};
\node [draw,shape=circle,fill=black,scale=0.4, label=below:$\mathbf{T}_j$] at (Tj) {};
\node [draw,shape=circle,fill=black,scale=0.4] at (join) {};
\node [draw,shape=circle,fill=black,scale=0.4, label={below:$\mathbf{T}_{j+ 3 \zeta m + 1}$}] at (merged) {};
\draw [->-=0.5, dotted] (T) -- (T22);
\draw (T) -- (T2);
\path [draw, dotted, ->-=0.5] (T22) to (2, 1) node [above, anchor=south west] {$\leq 3\zeta m$} to [out=0, in=180] (join);
\draw [->-=0.5, dotted] (T2) -- (Tj);
\draw (T22) -- (Tj);
\draw [thick, ->] (-3,0) -- (5.5,0) node [right] {$A$};  

\end{tikzpicture}
  \caption{Getting to the axis $A$.}
  \label{fig:getting_to_axis}
\end{figure}

Combining this with Lemma~\ref{lem:filling_axis} we obtain:
\begin{corollary}
\label{cor:filling_lamination}
The measured lamination $\calL$ is filling if and only if $s^t(T)$ is filling, where $t \defeq 6 \zeta K d(T, \widehat{h}(T))^2$. \qed
\end{corollary}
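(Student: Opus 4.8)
The plan is to deduce the statement immediately from Theorem~\ref{thrm:preperiodic_bound} together with Lemma~\ref{lem:filling_axis}, so I expect essentially a one-line argument with no real obstacle --- which is consistent with the statement being marked \qed in the text. All of the substantive work (the quadratic bound on the preperiodic length, and the fact that fillingness can be read off any train track on the axis) has already been carried out in the preceding results.

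First I would recall what the preperiodic length measures. By Theorem~\ref{thrm:maximal_splits_preperiodic} there is a smallest $n$ for which $s^n(T)$ satisfies the periodicity relation $s^{m+n}(T) = \widehat{h}(s^n(T))$; for this $n$ the forward orbit $\{s^{n+k}(T)\}_k$ together with its $\widehat{h}$--translates is precisely the axis $A$, so $s^n(T) = \mathbf{T}_j$ for some index $j$. More generally, since $s(\mathbf{T}_i) = \mathbf{T}_{i+1}$, once we have reached the axis every further maximal splitting keeps us on it, and hence $s^{n'}(T)$ lies on $A$ for all $n' \geq n$.

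Then I would invoke Theorem~\ref{thrm:preperiodic_bound}, which bounds the preperiodic length $n$ of $T$ above by $t = 6\zeta K d(T, \widehat{h}(T))^2$. Since $t \geq n$, the previous paragraph gives $s^t(T) = \mathbf{T}_i$ for some $i$, i.e.\ $s^t(T)$ lies on the axis $A$. Finally, Lemma~\ref{lem:filling_axis} asserts that $\calL$ is filling if and only if $\mathbf{T}_i$ is filling, and chaining this with the previous sentence yields exactly the claimed equivalence: $\calL$ is filling if and only if $s^t(T)$ is filling. The only point worth making explicit is that Theorem~\ref{thrm:preperiodic_bound} provides an upper bound rather than the exact preperiodic length, but this is harmless precisely because the axis is invariant under $s$, so Lemma~\ref{lem:filling_axis} still applies to whichever $\mathbf{T}_i$ we land on.
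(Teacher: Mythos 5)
Your proof is correct and is exactly the paper's (implicit) argument: the paper derives the corollary by combining Theorem~\ref{thrm:preperiodic_bound} with Lemma~\ref{lem:filling_axis}, just as you do. Your added remark --- that $t$ need only be an upper bound on the preperiodic length because the axis is invariant under $s$ --- is the right observation to make the one-line deduction airtight.
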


Finally, we note that a triangulation $\calT$ gives rise to a measured train track $T$ as shown in Figure~\ref{fig:tri_to_train_track}. Here the measure assigned to the branch transverse to $e_i$ is $\calL(e_i)$ and the measure on the other branches is determined by the \emph{switch condition} \cite[Page~11]{PH}. Furthermore, if $p$ is a path from $\calT$ to $h(\calT)$ in $G$ then this descends to a quasi-path in $\GG$ and so $d(T, \widehat{h}(T)) \leq 2 \ell(p)$.

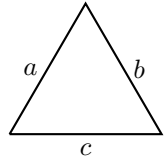
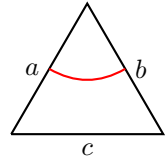
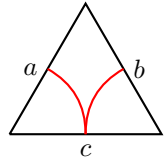
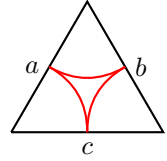
\begin{figure}[ht]
	\centering
	\begin{tabular}{cc}
	\begin{subfigure}[t]{0.4\textwidth}
		\centering
		\begin{tikzpicture}[scale=2,thick]
			\coordinate (A) at (0,0);
			\coordinate (B) at (1,0);
			\coordinate (C) at ($(0.5,{sqrt(3) / 2})$);
			\draw (A) -- node [below] {$c$} (B) -- node [right] {$b$} (C) -- node [left] {$a$} (A);
		\end{tikzpicture}
		\caption{When $\calL(a) = \calL(b) = \calL(c) = 0$.}
	\end{subfigure} &
	\begin{subfigure}[t]{0.4\textwidth}
		\centering
		\begin{tikzpicture}[scale=2,thick]
			\coordinate (A) at (0,0);
			\coordinate (B) at (1,0);
			\coordinate (C) at ($(0.5,{sqrt(3) / 2})$);
			\draw (A) -- node [below] {$c$} (B) -- node [right] {$b$} (C) -- node [left] {$a$} (A);
			\draw [red] ($(A)!0.5!(C)$) to [out=330, in=210] ($(B)!0.5!(C)$);
		\end{tikzpicture}
		\caption{When $\calL(a) = \calL(b)$ and $\calL(c) = 0$.}
	\end{subfigure} \\
	\begin{subfigure}[t]{0.4\textwidth}
		\centering
		\begin{tikzpicture}[scale=2,thick]
			\coordinate (A) at (0,0);
			\coordinate (B) at (1,0);
			\coordinate (C) at ($(0.5,{sqrt(3) / 2})$);
			\draw (A) -- node [below] {$c$} (B) -- node [right] {$b$} (C) -- node [left] {$a$} (A);
			\draw [red] ($(A)!0.5!(C)$) to [out=330, in=90] ($(A)!0.5!(B)$);
			\draw [red] ($(B)!0.5!(C)$) to [out=210, in=90] ($(A)!0.5!(B)$);
		\end{tikzpicture}
		\caption{When $\calL(a) + \calL(b) = \calL(c)$.}
	\end{subfigure} &
	\begin{subfigure}[t]{0.4\textwidth}
		\centering
		\begin{tikzpicture}[scale=2,thick]
			\coordinate (A) at (0,0);
			\coordinate (B) at (1,0);
			\coordinate (C) at ($(0.5,{sqrt(3) / 2})$);
			\draw (A) -- node [below] {$c$} (B) -- node [right] {$b$} (C) -- node [left] {$a$} (A);
			\draw [red] ($(A)!0.5!(C)$) to [out=330, in=90] ($(A)!0.5!(B)$);
			\draw [red] ($(B)!0.5!(C)$) to [out=210, in=90] ($(A)!0.5!(B)$);
			\draw [red] ($(A)!0.5!(C)$) to [out=330, in=210] ($(B)!0.5!(C)$);
		\end{tikzpicture}
		\caption{Otherwise.}
	\end{subfigure}
\end{tabular}
	\caption{A train track coming from a triangulation.}
	\label{fig:tri_to_train_track}
\end{figure}

\section{Certification}
\label{sec:pA}

We can now state the main algorithm for certifying that a mapping class is pseudo-Anosov. Again, we use a path $p$ from $\calT$ to $h(\calT)$ to represent $h$. We use a \emph{certificate} consisting of decimals $x_1, \ldots, x_\zeta$ and polynomials $f_1, \ldots, f_\zeta \in \ZZ[x]$. These decimals represent approximations of the measure assigned to each edge of $\calT$ by the stable lamination of $h$ while these polynomials are their minimal polynomials.

\begin{framed}  
\noindent
Given a path $p$ from $\calT$ to $h(\calT)$ and a certificate $x_1, \ldots, x_\zeta, f_1, \ldots, f_\zeta$, let:
\begin{itemize}
\item $\{A_i\}$ and $\{B_i\}$ be the matrices of Lemma~\ref{lem:encoding_LP},
\item $t \defeq 24 \zeta K \cdot \ell(p)^2$,
\item $h_0 \defeq \zeta^4 (\ell(p) + 6)$,  
\item $h_1 \defeq h_0 + 2 \zeta$,
\item $p_1 \defeq 2 \zeta^2 (2 h_1 + t + 3)$, and
\item $d_1 \defeq p_1 + t + \zeta h_1 + 2$.
\end{itemize}
In each of the following stages, all calculations are done to $d_1$ decimal places and all comparisons are done by \emph{only} comparing the first $p_1$ decimal places.
\begin{enumerate}
\item \label{stp:check_height} Check that each $f_i$ has degree at most $\zeta$ and the log of the absolute value of each coefficient is at most $h_0$.  
\item \label{stp:check_nonneg} Check that each $0 \leq x_i \leq 1$.
\item \label{stp:check_approx} Check that each pair $f_i(x_i \pm 10^{-d_1})$ have different signs.
\item \label{stp:check_triangle} For each face of $\calT$ with edges $a, b, c$ check that $x_a + x_b \geq x_c$.
\item \label{stp:check_peripheral} For each vertex $v \in V(\calT)$ check that there is an incident face of $\calT$ with edges $a, b, c$ such that $v \subseteq a \cap b$ and $x_a + x_b = x_c$.
\item \label{stp:check_unitary} Check that $\sum x_i = 1$.
\item \label{stp:compute_image} Find $B_i$ such that $B_i \cdot (x_1 \; \cdots \; x_\zeta)^T \geq 0$ and compute 
\[ (y_1 \; \cdots \; y_\zeta)^T \defeq A_i \cdot (x_1 \; \cdots \; x_\zeta)^T \inlineand y \defeq \sum y_i. \]
\item \label{stp:check_invariant} Check that each $y_i = y x_i$.
\item \label{stp:check_stable} Check that $y > 1$.
\item \label{stp:check_filling} Check that $s^{t}(T')$ is filling where $T'$ is the measured train track corresponding to $x_1, \ldots, x_\zeta$ (see Section~\ref{sec:train_tracks}).
\end{enumerate}
\end{framed}

We say that a certificate is \emph{accepted} by the main algorithm if every check passes and is \emph{rejected} otherwise. We will show that: 
\begin{itemize}
\item for each $h \in X^*$, there is a certificate that the main algorithm accepts if and only if $h$ is pseudo-Anosov. (Theorem~\ref{thrm:algorithm_correct}), and
\item the main algorithm will accept or reject a certificate of $h$ in $O(\ell(p)^4)$ time (Theorem~\ref{thrm:algorithm_analysis}).
\end{itemize}
As we may choose $p$ such that $\ell(p) \in O(\ell(h))$, these two results suffice to show that the pseudo-Anosov problem is in $\NP$.

\subsection{Algebraic numbers}

In order to prove the correctness of the main algorithm we first recall some properties of algebraic numbers.

\begin{definition}
If $f(x) = \sum a_i x^i \in \ZZ[x]$ is a polynomial then we define its \emph{height} to be 
\[ \height(f) \defeq \log(\max(|a_i|)). \] 
\end{definition}

\begin{definition}
If $\alpha$ is an algebraic number then we define its \emph{height} to be $\height(\alpha) \defeq \height(\mu_\alpha)$ where $\mu_\alpha \in \ZZ[x]$ is its minimal integral polynomial.
\end{definition}

\begin{fact}
\label{fct:heights}
If $\alpha, \beta \in \QQbar$ are algebraic numbers then:
\begin{itemize}
\item $\logdegree(\alpha \pm \beta) \leq \logdegree(\alpha) + \logdegree(\beta)$, where $\logdegree(\alpha) \defeq \log(\deg(\alpha))$,
\item $\height(\alpha \pm \beta) \leq \height(\alpha) + \height(\beta) + 1$ \cite[Property~3.3]{W},
\item $\height(\alpha \beta) \leq \height(\alpha) + \height(\beta)$ \cite[Property~3.3]{W},
\item $\height(\alpha^{-1}) = \height(\alpha)$, and  
\item if $\alpha$ is a root of $f \in \ZZ[x]$ then $\height(\alpha) \leq \height(f) + 2 \deg(f)$ \cite[Corollary~10.12]{BasuPollackRoy}.
\end{itemize}
\end{fact}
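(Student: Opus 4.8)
\emph{Proof idea.} The plan is to route everything through the \emph{Mahler measure}. For a polynomial $f = a_n\prod_{i=1}^n(x-\gamma_i)\in\CC[x]$ write $M(f) \defeq |a_n|\prod_i\max(1,|\gamma_i|)$, and for an algebraic number $\alpha$ put $M(\alpha)\defeq M(\mu_\alpha)$. Two properties of $M$ do all the work: it is multiplicative, $M(fg)=M(f)M(g)$, so $M(g)\ge M(\alpha)\ge 1$ whenever $\mu_\alpha\mid g$ in $\ZZ[x]$; and Landau's inequality $M(f)\le(\sum_i a_i^2)^{1/2}$ together with the root bound $|a_k|\le\binom{n}{k}M(f)$ show that $M$ is sandwiched against the naive height. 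Concretely, for $\alpha$ of degree $d$,
\[ \log M(\alpha) \le \height(\alpha) + \tfrac12\log(d+1) \inlineand \height(\alpha) \le \log M(\alpha) + d\log 2. \]

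The degree bounds are immediate and independent of $M$: $\alpha\pm\beta$ and $\alpha\beta$ all lie in $\QQ(\alpha,\beta)$ and $[\QQ(\alpha,\beta):\QQ]\le[\QQ(\alpha):\QQ]\,[\QQ(\beta):\QQ]=\deg(\alpha)\deg(\beta)$, while $\QQ(\alpha^{-1})=\QQ(\alpha)$; taking logarithms gives $\logdegree(\alpha\pm\beta)\le\logdegree(\alpha)+\logdegree(\beta)$. For $\height(\alpha^{-1})$ I would argue directly and obtain the stated equality on the nose: since $\alpha\ne0$ the constant term of $\mu_\alpha(x)=a_dx^d+\dots+a_0$ is nonzero, the reciprocal polynomial $a_0x^d+a_1x^{d-1}+\dots+a_d$ is again primitive and irreducible (a factorization would reverse to one of $\mu_\alpha$) and vanishes at $\alpha^{-1}$, so it equals $\pm\mu_{\alpha^{-1}}$; its multiset of coefficients is that of $\mu_\alpha$, hence $\height(\alpha^{-1})=\height(\alpha)$.

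For the two remaining height inequalities I would bound $M(\alpha\pm\beta)$ and $M(\alpha\beta)$ via conjugates. Let $\alpha_1,\dots,\alpha_d$ and $\beta_1,\dots,\beta_e$ be the conjugates of $\alpha$ and $\beta$, and let $a,b$ be the leading coefficients of $\mu_\alpha,\mu_\beta$. Every conjugate of $\alpha\pm\beta$ occurs among the $\alpha_i\pm\beta_j$, so $\mu_{\alpha\pm\beta}$ divides the integer polynomial $a^eb^d\prod_{i,j}(x-\alpha_i\mp\beta_j)$ (a resultant of $\mu_\alpha$ and $\mu_\beta$), and similarly $\mu_{\alpha\beta}\mid a^eb^d\prod_{i,j}(x-\alpha_i\beta_j)$. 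Multiplicativity of $M$, these divisibilities, and the pointwise estimates $\max(1,|x\pm y|)\le 2\max(1,|x|)\max(1,|y|)$ and $\max(1,|xy|)\le\max(1,|x|)\max(1,|y|)$ yield $M(\alpha\pm\beta)\le 2^{de}M(\alpha)^eM(\beta)^d$ and $M(\alpha\beta)\le M(\alpha)^eM(\beta)^d$. Feeding these into the sandwich above bounds $\height(\alpha\pm\beta)$ and $\height(\alpha\beta)$ by $\height(\alpha)+\height(\beta)$ up to an additive $O(de)$ error. The last item is easiest: if $\alpha$ is a root of $f\in\ZZ[x]$ then $\mu_\alpha\mid f$ up to content, so $M(\alpha)\le M(f)\le\sqrt{\deg f+1}\cdot\max_i|a_i|$, whence $\height(\alpha)\le\log M(\alpha)+\deg(\alpha)\log2\le\height(f)+\tfrac12\log(\deg f+1)+\deg(f)\log2\le\height(f)+2\deg f$.

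The one genuinely delicate point is the sharpness of the additive constants: the cheap translations above lose $O(de)$ when passing between $M$ and $\height$, whereas the statement asks for the clean $+1$ for sums and $+0$ for products. Recovering those exact constants means working with $M$ (equivalently, with the normalized Weil height $\tfrac1{\deg\alpha}\log M(\alpha)$) throughout and converting only once at the very end, so that the $2^{\deg}$-type losses cancel, or invoking sharper resultant estimates directly on $\mu_{\alpha\pm\beta}$. Since the downstream use of this fact has ample slack and the precise inequalities are exactly what \cite[Property~3.3]{W} and \cite[Corollary~10.12]{BasuPollackRoy} supply, I would present the Mahler-measure argument as the conceptual proof and cite those sources for the tight constants rather than re-derive them.
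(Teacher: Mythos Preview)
The paper does not prove this statement at all: it is recorded as a \emph{Fact} and simply cites \cite[Property~3.3]{W} and \cite[Corollary~10.12]{BasuPollackRoy} for the nontrivial items. So there is no ``paper's own proof'' to compare against beyond those references.

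Your proposal therefore goes further than the paper does. The Mahler-measure route you outline is the standard conceptual explanation, and the pieces you give are correct: the degree bound via $[\QQ(\alpha,\beta):\QQ]$, the reciprocal-polynomial argument for $\height(\alpha^{-1})=\height(\alpha)$, the resultant construction for divisors of $\mu_{\alpha\pm\beta}$ and $\mu_{\alpha\beta}$, and the Landau/binomial sandwich between $M$ and the naive height. Your derivation of the last item is complete and gives the stated constant. You are also right to flag that the naive translation between $M$ and $\height$ loses an additive $O(de)$ term, so that the crisp $+1$ and $+0$ in the sum and product bounds do not fall out of your sketch as written; deferring to the cited sources for those sharp constants is exactly what the paper itself does.
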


Most importantly, algebraic numbers of bounded degree and height are bounded away from zero.

\begin{lemma}[{\cite[Lemma~10.3]{Cohen}}]
\label{lem:algebraic_approximations}
If $\alpha$ is an algebraic number then $\alpha = 0$ if and only if the integer part of $\alpha$ is $0$ and at least the first $\height(\alpha) + \logdegree(\alpha)$ decimal places of $\alpha$ are $0$. \qed
\end{lemma}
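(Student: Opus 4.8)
The ``only if'' direction is immediate, so the plan is to establish the converse in the form of a quantitative non-vanishing bound. Since ``integer part'' and ``decimal places'' refer to a real number, and since replacing $\alpha$ by $|\alpha|$ changes neither its degree nor its height (the minimal polynomial of $-\alpha$ is $\pm\mu_\alpha(-x)$, which has the same degree and the same coefficient moduli), I would first reduce to the case $\alpha \geq 0$. It then suffices to prove that a nonzero such $\alpha$ satisfies
\[ \alpha \geq 10^{-(\height(\alpha) + \logdegree(\alpha))}, \]
for if the integer part of $\alpha$ is $0$ and its first $\height(\alpha)+\logdegree(\alpha)$ decimal places all vanish then $0 \leq \alpha < 10^{-(\height(\alpha)+\logdegree(\alpha))}$, which together with the bound forces $\alpha = 0$.

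To prove the bound, write $\mu_\alpha(x) = \sum_{i=0}^{d} a_i x^i \in \ZZ[x]$ for the minimal integral polynomial of $\alpha$, with $d = \deg(\alpha)$ and $a_d \neq 0$. Since $\mu_\alpha$ is irreducible and $\alpha \neq 0$ we have $a_0 = \mu_\alpha(0) \neq 0$, so $|a_0| \geq 1$; set $H := \max_i|a_i|$, so $H = 10^{\height(\alpha)}$, and let $\alpha = \alpha_1, \dots, \alpha_d \in \QQbar$ be the roots of $\mu_\alpha$. I would bound $|\alpha|$ below via the Mahler measure $M(\mu_\alpha) = |a_d|\prod_j \max(1,|\alpha_j|)$: starting from $|a_0| = |a_d|\prod_j|\alpha_j|$,
\[ |\alpha| \;=\; \frac{|a_0|/|a_d|}{\prod_{j\geq 2}|\alpha_j|} \;\geq\; \frac{|a_0|/|a_d|}{\prod_{j\geq 1}\max(1,|\alpha_j|)} \;=\; \frac{|a_0|}{M(\mu_\alpha)} \;\geq\; \frac{1}{M(\mu_\alpha)}, \]
and then Landau's inequality $M(\mu_\alpha) \leq \big(\sum_i a_i^2\big)^{1/2} \leq \sqrt{d+1}\,H$ yields $|\alpha| \geq 1/(\sqrt{d+1}\,H)$. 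As $\sqrt{d+1} \leq d$ for $d \geq 2$, this gives $|\alpha| \geq 1/(dH) = 10^{-(\height(\alpha) + \logdegree(\alpha))}$; and the leftover case $d = 1$ is immediate, since then $\alpha = -a_0/a_1$ with $|a_1| \leq H$ and $\logdegree(\alpha) = 0$. (An alternative that avoids the Mahler measure is to apply Cauchy's root bound to the reciprocal polynomial $x^d\mu_\alpha(1/x)$, whose leading coefficient is $a_0$ and constant term $a_d$; this gives the slightly weaker $|\alpha| \geq 1/(1+H)$, which suffices after a harmless adjustment of constants, and is essentially the content of the identity $\height(\alpha^{-1}) = \height(\alpha)$ from Fact~\ref{fct:heights}.)

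I do not expect a genuine obstacle: the mathematical content is the classical root--separation estimate $|\alpha| \geq 1/(\sqrt{d+1}\max_i|a_i|)$ for a root of an integer polynomial with nonzero constant term. The only delicate points are bookkeeping. First, one must fix that $\log$ is taken base $10$, so that $10^{\height(\alpha)} = \max_i|a_i|$ and $10^{\logdegree(\alpha)} = \deg(\alpha)$. Second, the factor $\sqrt{d+1}$ must be absorbed into $10^{\logdegree(\alpha)} = d$, which is exactly why the case $d = 1$ needs its own (trivial) argument. Third, one should pin down the conventions for the ``integer part'' and ``decimal places'' of a negative real, which is handled at the outset by passing from $\alpha$ to $|\alpha|$.
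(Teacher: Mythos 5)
Your argument is correct: the reduction to the lower bound $|\alpha| \geq 10^{-(\height(\alpha)+\logdegree(\alpha))}$ for nonzero $\alpha$, the Mahler-measure/Landau estimate, and the separate $d=1$ case all check out. Note, however, that the paper does not prove this lemma at all --- it is imported verbatim from the cited reference (Cohen) and stated with \qed --- so there is no in-paper proof to compare against; your write-up simply supplies the standard root-lower-bound argument underlying that citation.
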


\begin{remark}
In certain circumstances the inequalities of Fact~\ref{fct:heights} may be strengthened. For example, if $\alpha_1, \ldots, \alpha_k$ are integers then
\[ \height\left(\sum \alpha_i \right) \leq \max(\height(\alpha_i)) + \log(k) \ll \sum \height(\alpha_i) + (k-1). \]
\end{remark}

\subsection{Acceptance implies pseudo-Anosov}
\label{sub:accept_pA}

We can now prove the correctness of the main algorithm. Namely that $h$ is pseudo-Anosov if and only if there is a certificate that the main algorithm accepts. We begin with the converse direction and proceed by showing that:
\begin{itemize}
\item Stages~\ref{stp:check_height}--\ref{stp:check_approx} show that each $x_i$ is close to an algebraic number (Proposition~\ref{prop:approx_algebraic}).
\item Stages~\ref{stp:check_triangle}--\ref{stp:check_peripheral} show that these algebraic numbers correspond to a measured lamination (Proposition~\ref{prop:exists_lamination}).
\item Stage~\ref{stp:check_unitary} shows that this lamination is \emph{unitary (with respect to $\calT$)}, that is, $||\calT(\calL)|| = 1$, (Proposition~\ref{prop:unitary_lamination}).
\item Stage~\ref{stp:check_invariant} shows that this lamination is projectively invariant under $h$ (Proposition~\ref{prop:invariant_lamination}).
\item Stage~\ref{stp:check_stable} shows that this lamination is stable (Proposition~\ref{prop:stable_lamination}).
\item Stage~\ref{stp:check_filling} shows that this lamination is filling (Proposition~\ref{prop:filling_lamination}).
\end{itemize}

\begin{proposition}
\label{prop:approx_algebraic}
If Stages~\ref{stp:check_height}--\ref{stp:check_approx} of the main algorithm complete then each $x_i$ lies within $10^{-d_1}$ of a unique algebraic number $v_i$ of degree at most $\zeta$ and height at most $h_1$.
\end{proposition}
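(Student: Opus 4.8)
The plan is to extract from the three checks in Stages~\ref{stp:check_height}--\ref{stp:check_approx} the existence and uniqueness of the algebraic number $v_i$ approximated by $x_i$, and to bound its height. Stage~\ref{stp:check_height} tells us $f_i \in \ZZ[x]$ has $\deg(f_i) \leq \zeta$ and $\height(f_i) \leq h_0$. Stage~\ref{stp:check_approx} says $f_i(x_i - 10^{-d_1})$ and $f_i(x_i + 10^{-d_1})$ have opposite signs, so by the intermediate value theorem $f_i$ has a real root $v_i$ in the open interval $(x_i - 10^{-d_1}, x_i + 10^{-d_1})$; in particular $|x_i - v_i| < 10^{-d_1}$. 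Since $v_i$ is a root of the nonzero integer polynomial $f_i$ of degree at most $\zeta$, it is algebraic of degree at most $\zeta$, and by the last item of Fact~\ref{fct:heights}, $\height(v_i) \leq \height(f_i) + 2\deg(f_i) \leq h_0 + 2\zeta = h_1$. (Here I am using that $f_i \neq 0$, which follows from Stage~\ref{stp:check_approx} — a zero polynomial cannot take values of opposite sign.)

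The remaining point is \emph{uniqueness}: I must rule out the possibility that two distinct roots of $f_i$ (or roots of two different admissible polynomials) both lie within $10^{-d_1}$ of $x_i$. This is where the separation bound for algebraic numbers, Lemma~\ref{lem:algebraic_approximations}, does the work. If $v_i$ and $v_i'$ are two distinct algebraic numbers each of degree at most $\zeta$ and height at most $h_1$, then their difference $v_i - v_i'$ is a nonzero algebraic number with $\logdegree(v_i - v_i') \leq 2\logdegree$ and $\height(v_i - v_i') \leq 2h_1 + 1$, hence bounded in terms of $\zeta$ and $h_1$; by Lemma~\ref{lem:algebraic_approximations} it must then exceed $10^{-(\height(v_i - v_i') + \logdegree(v_i - v_i'))}$ in absolute value. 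The definition $d_1 = p_1 + t + \zeta h_1 + 2$ is chosen so that $2 \cdot 10^{-d_1}$ is strictly smaller than this separation lower bound, so $v_i$ and $v_i'$ cannot both lie within $10^{-d_1}$ of the single real number $x_i$ (which would force $|v_i - v_i'| < 2 \cdot 10^{-d_1}$). This proves uniqueness of the algebraic number within distance $10^{-d_1}$ of $x_i$.

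I expect the main obstacle to be the bookkeeping in the uniqueness step — namely verifying that $d_1$, as defined in the framed algorithm, really does dominate $\height(v_i - v_i') + \logdegree(v_i - v_i')$ with room to spare, using the height and degree arithmetic of Fact~\ref{fct:heights} applied to a difference of two numbers each of degree $\le \zeta$ and height $\le h_1$. Concretely one gets a bound of the shape $2h_1 + 1 + \log(\zeta^2)$ for the height-plus-log-degree of the difference, and one checks $d_1 \ge p_1 + t + \zeta h_1 + 2$ comfortably exceeds this (indeed $\zeta h_1$ alone dominates $2h_1 + 1 + 2\log\zeta$ once $\zeta \ge 2$). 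Everything else is a direct application of the intermediate value theorem and Fact~\ref{fct:heights}; no genuinely hard estimate is required beyond tracking these constants.
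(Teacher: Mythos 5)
Your proposal is correct and follows essentially the same route as the paper: intermediate value theorem for existence of the root $v_i$, the last item of Fact~\ref{fct:heights} for the height bound $h_0 + 2\zeta = h_1$, and the separation bound for distinct algebraic numbers of degree at most $\zeta$ and height at most $h_1$ (the paper quotes it as $10^{-(2h_1 + 2\zeta + 1)} > 10^{-(d_1-1)}$) for uniqueness. The only differences are cosmetic bookkeeping in the final comparison with $d_1$.
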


\begin{proof}
By the intermediate value theorem, Stage~\ref{stp:check_approx} shows that $f_i$ must have a root $v_i$ in $[x_i - 10^{-d_1}, x_i + 10^{-d_1}]$. It follows from Fact~\ref{fct:heights} that $v_i$ is an algebraic number of degree at most $\deg(f_i) \leq \zeta$ and height at most $\height(f_i) + 2 \zeta \leq h_0 + 2 \zeta = h_1$. Finally, any two distinct algebraic number of degree at most $\zeta$ and height at most $h_1$ must be separated by at least $10^{-(2 h_1 + 2 \zeta + 1)} > 10^{-(d_1 - 1)}$ and so $v_i$ is unique.
\end{proof}

From now on fix $v_1, \ldots, v_\zeta$ to be these algebraic numbers.

\begin{proposition}
\label{prop:exists_lamination}
Stages~\ref{stp:check_triangle}--\ref{stp:check_peripheral} of the main algorithm complete if and only if $v_1, \ldots, v_\zeta$ corresponds to a measured lamination $\calL \in \ML(S)$, that is, 
\[ \calT(\calL) = (v_1, \ldots, v_\zeta). \]
\end{proposition}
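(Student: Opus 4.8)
Recall from the discussion preceding Lemma~\ref{lem:is_multicurve_LP} that $(v_1,\ldots,v_\zeta)\in\RRz^\zeta$ equals $\calT(\calL)$ for some $\calL\in\ML(S)$ precisely when: \emph{(i)} $\sum_i v_i>0$; \emph{(ii)} every face with edges $a,b,c$ satisfies $v_a+v_b\geq v_c$; and \emph{(iii)} every vertex $v$ has an incident face with edges $a,b,c$ such that $v\subseteq a\cap b$ and $v_a+v_b=v_c$. Stages~\ref{stp:check_triangle} and~\ref{stp:check_peripheral} are the \emph{approximate} versions of (ii) and (iii), run on the decimals $x_i$ rather than on the $v_i$. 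The plan is to show that the degree and height control supplied by Proposition~\ref{prop:approx_algebraic} makes each approximate test report the true arithmetic relation among the corresponding $v_i$, so that Stages~\ref{stp:check_triangle}--\ref{stp:check_peripheral} complete exactly when (ii) and (iii) hold; the side conditions ``$(v_i)\in\RRz^\zeta$'' and (i) are dealt with separately.

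For the core step, fix a face (or an incident face at a vertex) with edges $a,b,c$ and set $\alpha\defeq v_a+v_b-v_c$. By Proposition~\ref{prop:approx_algebraic} each $v_i$ has degree at most $\zeta$ and height at most $h_1$, so Fact~\ref{fct:heights} gives $\logdegree(\alpha)\leq 3\log\zeta$ and $\height(\alpha)\leq 3h_1+2$. Lemma~\ref{lem:algebraic_approximations} then says that either $\alpha=0$ or $|\alpha|\geq 10^{-(3h_1+2+3\log\zeta)}$. On the other hand, since each $x_i$ lies within $10^{-d_1}$ of $v_i$, the quantity $x_a+x_b-x_c$ computed by the algorithm lies within $3\cdot 10^{-d_1}$ of $\alpha$. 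The inequalities $3h_1+2+3\log\zeta<p_1<d_1$ among the constants (immediate from their definitions, using $\zeta\geq 3$) guarantee that this perturbation is too small either to cross the separation threshold or to flip a sign, so the test carried out on the first $p_1$ decimal places of $x_a+x_b$ against $x_c$ returns the sign of $\alpha$ when $\alpha\neq0$ and reports equality when $\alpha=0$. Hence Stage~\ref{stp:check_triangle} completes if and only if $v_a+v_b\geq v_c$ for every face, i.e. if and only if (ii) holds; and at each vertex the existential test of Stage~\ref{stp:check_peripheral} succeeds if and only if some incident face has $v_a+v_b=v_c$, so Stage~\ref{stp:check_peripheral} completes if and only if (iii) holds. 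The same separation bound, applied now to Stage~\ref{stp:check_nonneg} (already passed, so $0\leq x_i\leq1$), forces each $v_i\geq0$: a negative algebraic number of height at most $h_1$ and degree at most $\zeta$ would satisfy $|v_i|\geq 10^{-(h_1+\log\zeta)}>10^{-d_1}$, a contradiction. Thus $(v_1,\ldots,v_\zeta)\in\RRz^\zeta$.

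Combining these, Stages~\ref{stp:check_triangle}--\ref{stp:check_peripheral} complete if and only if $(v_1,\ldots,v_\zeta)\in\RRz^\zeta$ satisfies (ii) and (iii), which is the asserted equivalence modulo the positivity condition (i). Here each $v_i$ is non-negative, and the only vector meeting (ii) and (iii) yet failing (i) is $v=0$; I would either rule this out by invoking the standing convention on $\ML(S)$, or simply record that it is harmless, being rejected downstream (it is neither stable at Stage~\ref{stp:check_stable}, where $y=0\not>1$, nor filling at Stage~\ref{stp:check_filling}). I expect the main difficulty to be not conceptual but bookkeeping: verifying that the working precision $d_1$ is large enough that the $10^{-d_1}$ approximation error is dominated by the separation bound $10^{-(3h_1+2+3\log\zeta)}$ of Lemma~\ref{lem:algebraic_approximations}, while the comparison precision $p_1$ is simultaneously large enough to detect that separation; and handling the mild subtlety that a comparison truncated to $p_1$ places can misbehave when the two numbers straddle a decimal boundary there — which does not occur precisely because, in every relevant case, the underlying algebraic number is either exactly zero or bounded well away from zero by more than $10^{-p_1}$.
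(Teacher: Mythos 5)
Your proposal is correct and follows essentially the same route as the paper's proof: bound $\height(v_a+v_b-v_c)+\logdegree(v_a+v_b-v_c)$ via Fact~\ref{fct:heights} and Proposition~\ref{prop:approx_algebraic}, then use Lemma~\ref{lem:algebraic_approximations} to conclude that each truncated comparison of the $x_i$ to $p_1$ places reports the exact relation among the $v_i$. Your extra care over the side conditions ($v_i\geq 0$ from Stage~\ref{stp:check_nonneg} and the degenerate case $\sum v_i=0$, which the paper's proof passes over silently) is a small but genuine improvement in rigour rather than a different argument.
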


\begin{proof}
If $x_a + x_b > x_c$ to $p_1$ decimal places then 
\[ v_a + v_b - v_c > 10^{-p_1} - 10^{-d_1} - 10^{-d_1} \geq 10^{-(p_1 - 1)}. \]
However
\[ \height(v_a + v_b - v_c) + \logdegree(v_a + v_b - v_c) \leq 3 h_1 + 3 \zeta \leq p_1 - 1 \]
and so by Lemma~\ref{lem:algebraic_approximations} we have that $v_a + v_b > v_c$. By the same argument if $x_a + x_b = x_c$ to $p_1$ decimal places then $v_a + v_b = v_c$. Hence $v_1, \ldots, v_\zeta$ corresponds to a measured lamination.

Conversely, suppose that $v_1, \ldots, v_\zeta$ corresponds to a measured lamination. If $v_a + v_b > v_c$ then $v_a + v_b - v_c > 10^{-(3 h_1 + 3 \zeta)}$ and so 
\[ x_a + x_b - x_c > 10^{-(3 h_1 + 3 \zeta)} - 10^{-(d_1 - 1)} > 10^{-p_1}. \]
Hence $x_a + x_b > x_c$ to at least $p_1$ decimal places. Similarly if $v_a + v_b = v_c$ then $x_a + x_b = x_c$ to at least $p_1$ decimal places and so Stages~\ref{stp:check_triangle}--\ref{stp:check_peripheral} of the main algorithm will complete.
\end{proof}

From now on fix $\calL$ to be this measured lamination.

\begin{proposition}
\label{prop:unitary_lamination}
Stage~\ref{stp:check_unitary} of the main algorithm completes if and only if $\calL$ is unitary (with respect to $\calT$).
\end{proposition}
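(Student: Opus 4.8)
The plan is to reuse the template of Proposition~\ref{prop:exists_lamination}, now applied to the single algebraic number $\alpha \defeq v_1 + \cdots + v_\zeta - 1$. Stage~\ref{stp:check_unitary} checks that $\sum x_i = 1$ to $p_1$ decimal places, and $\calL$ being unitary means exactly $\sum v_i = 1$, i.e.\ $\alpha = 0$; so the task is to pass between an approximate equality of the decimals $x_i$ and an exact equality of the underlying algebraic numbers $v_i$, using Lemma~\ref{lem:algebraic_approximations}.

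First I would record bounds on $\alpha$. Each $v_i$ has degree at most $\zeta$ and height at most $h_1$ by Proposition~\ref{prop:approx_algebraic}, while $1$ is rational with $\height(1) = 0$. Applying Fact~\ref{fct:heights} to the $\zeta$-fold sum gives $\logdegree(\alpha) \leq \zeta \log \zeta$ and $\height(\alpha) \leq \zeta h_1 + \zeta$ (one "$+1$" per addition). Hence $\height(\alpha) + \logdegree(\alpha) \leq \zeta h_1 + \zeta + \zeta \log \zeta$, which is comfortably at most $p_1 - 1$ by the definition $p_1 = 2\zeta^2(2h_1 + t + 3)$; I would also note that $d_1 = p_1 + t + \zeta h_1 + 2$ exceeds $p_1 + \log\zeta$.

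For the forward direction, suppose Stage~\ref{stp:check_unitary} completes, so $|\sum x_i - 1| < 10^{-p_1}$. Each $x_i$ lies within $10^{-d_1}$ of $v_i$ by Proposition~\ref{prop:approx_algebraic}, so $|\alpha| = |\sum v_i - 1| \leq |\sum x_i - 1| + \zeta \cdot 10^{-d_1} < 10^{-p_1} + \zeta \cdot 10^{-d_1} < 10^{-(p_1 - 1)}$. In particular the integer part of $\alpha$ is $0$ and its first $p_1 - 1 \geq \height(\alpha) + \logdegree(\alpha)$ decimal places vanish, so Lemma~\ref{lem:algebraic_approximations} forces $\alpha = 0$; that is, $\calL$ is unitary. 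Conversely, if $\calL$ is unitary then $\sum v_i = 1$, so $|\sum x_i - 1| = |\sum x_i - \sum v_i| \leq \zeta \cdot 10^{-d_1} < 10^{-p_1}$, whence $\sum x_i = 1$ to at least $p_1$ decimal places and Stage~\ref{stp:check_unitary} completes.

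The only point needing any care is the degree and height accounting for a $\zeta$-fold sum and checking that the resulting quantity fits inside the slack built into $p_1$ and $d_1$; once that is in place, everything follows from the separation lemma exactly as in Proposition~\ref{prop:exists_lamination}, so I do not anticipate a genuine obstacle.
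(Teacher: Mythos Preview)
Your proof is correct and follows essentially the same approach as the paper's: both reduce to showing that the algebraic number $\sum v_i - 1$ has height plus log-degree bounded by $p_1 - 1$ and then invoke Lemma~\ref{lem:algebraic_approximations}, with the converse handled by the trivial estimate $|\sum x_i - \sum v_i| \leq \zeta \cdot 10^{-d_1}$. The only difference is cosmetic bookkeeping---the paper uses the looser bound $\zeta h_1 + \zeta^2$ where you use $\zeta h_1 + \zeta + \zeta\log\zeta$---but the argument is otherwise identical.
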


\begin{proof}
If $\sum x_i = 1$ to $p_1$ decimal places then $\sum v_i - 1 = 0$ to at least $p_1 - 1$ decimal places. However 
\[ \height\left(\sum v_i - 1\right) + \logdegree\left(\sum v_i - 1\right) \leq \zeta h_1 + \zeta^2 \leq p_1 - 1 \]
and so again by Lemma~\ref{lem:algebraic_approximations} we have that $\sum v_i = 1$. Hence $\calL$ is unitary.

Conversely, if $\calL$ is unitary then $\sum v_i - 1 = 0$ and so $\left|\sum x_i - 1\right| \leq \zeta 10^{-d_1}$. Hence $\sum x_i = 1$ to at least $p_1$ decimal places and so Stage~\ref{stp:check_unitary} of the main algorithm will complete.
\end{proof}

Now following Stage~\ref{stp:compute_image}, let $B_i$ be such that $B_i \cdot (x_1 \; \cdots \; x_\zeta)^T \geq 0$ and fix
\[ (y_1 \; \cdots \; y_\zeta)^T \defeq A_i \cdot (x_1 \; \cdots \; x_\zeta)^T \inlineand y \defeq \sum y_i. \]
As $v_i$ and $x_i$ are so close and comparisons are only done to $p_1$ decimal places we also have that $B_i \cdot (v_1 \; \cdots \; v_\zeta)^T \geq 0$. Hence we fix 
\[ (w_1 \; \cdots \; w_\zeta)^T \defeq A_i \cdot (v_1 \; \cdots \; v_\zeta)^T \inlineand \lambda \defeq \sum w_i. \]
Each $A_i$ is $\ell(p)$--bounded and $\ell(p) \leq h_1$. Therefore each $w_i$ has height at most $h_2 \defeq \zeta (2 h_1 + 1)$ and agrees with $y_i$ to at least $d_2 \defeq d_1 - \zeta h_1$ decimal places. Similarly, $\lambda$ has height at most $h_3 \defeq \zeta (h_2 + 1)$ and agrees with $y$ to at least $d_3 \defeq d_2 - \zeta$ decimal places.

\begin{proposition}
\label{prop:invariant_lamination}
Stage~\ref{stp:check_invariant} of the main algorithm completes if and only if $\calL$ is projectively invariant under $h$, that is, $h(\calL) = \lambda \cdot \calL$.
\end{proposition}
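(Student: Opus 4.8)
The plan is to split the argument into two parts: a geometric identification of the exact vector $(w_1, \dots, w_\zeta)$, and a numerical step showing that the comparison in Stage~\ref{stp:check_invariant} faithfully detects the system of equations that identification produces.

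For the geometric part, note that, as recorded just before the proposition, $B_i \cdot \calT(\calL) \geq 0$, so Lemma~\ref{lem:encoding_LP}(4) gives
\[ (w_1 \; \cdots \; w_\zeta)^T = A_i \cdot \calT(\calL) = \calT'(\calL), \]
where $\calT' = h(\calT)$ is the endpoint of $p$. By naturality of the edge-vector coordinates under mapping classes, $\calT'(\calL) = (h(\calT))(\calL)$ is simply the $\calT$-edge vector of $h(\calL)$. Since $\calT(\cdot)$ is homogeneous and $\calL$ is unitary (Proposition~\ref{prop:unitary_lamination}, so $\sum v_i = 1$), it follows that $\calL$ is projectively invariant under $h$ if and only if $(w_1, \dots, w_\zeta)$ is proportional to $(v_1, \dots, v_\zeta)$, and in that case summing the entries forces the constant of proportionality to be $\lambda = \sum w_i$. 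Equivalently, $h(\calL) = \lambda \cdot \calL$ if and only if $z_i \defeq w_i - \lambda v_i$ vanishes for every $i$, so it remains to show that Stage~\ref{stp:check_invariant} accepts precisely when all the $z_i$ vanish.

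For the numerical part, write
\[ (y_i - y x_i) - z_i = (y_i - w_i) - y \, (x_i - v_i) - v_i \, (y - \lambda). \]
Using $0 \leq x_i \leq 1$ (Stage~\ref{stp:check_nonneg}), the estimates $|y_i - w_i| \leq 10^{-d_2}$, $|y - \lambda| \leq 10^{-d_3}$ and $|x_i - v_i| \leq 10^{-d_1}$ recorded before the proposition, and the crude bounds $|v_i| \leq 2$ and $|y| \leq 10^{h_1}$ (valid because $y$ is obtained by applying the $\ell(p)$-bounded matrix $A_i$ to the bounded vector $(v_1, \dots, v_\zeta)$, so its size is governed by $\ell(p)$ rather than by the heights $h_2, h_3$ of these algebraic numbers), one checks by unwinding the definitions of $d_1, d_2, d_3$ that each summand is well below $10^{-p_1}$; hence the computed quantity $y_i - y x_i$ agrees with the exact number $z_i$ to comfortably more than $p_1$ decimal places. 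On the other hand, Fact~\ref{fct:heights} gives
\[ \height(z_i) \leq \height(w_i) + \height(\lambda) + \height(v_i) + 1 \leq h_2 + h_3 + h_1 + 1, \]
and $\logdegree(z_i) \leq \logdegree(w_i) + \logdegree(\lambda) + \logdegree(v_i) = O(\zeta^2)$, since $w_i$, $\lambda$ and $v_i$ all lie in the number field generated by $v_1, \dots, v_\zeta$, which has log-degree at most $\zeta^2$; comparing with the definition of $p_1$ shows that $\height(z_i) + \logdegree(z_i)$ lies comfortably below $p_1$. By Lemma~\ref{lem:algebraic_approximations}, $z_i = 0$ if and only if its integer part and first $\height(z_i) + \logdegree(z_i)$ decimal places vanish; since this many places is below $p_1$ and $z_i$ agrees with $y_i - y x_i$ to more than $p_1$ places, this is equivalent to the test performed in Stage~\ref{stp:check_invariant}. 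Running this over all $i$ and combining with the geometric part proves the proposition.

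I expect the precision bookkeeping in the numerical part to be the only delicate point: one must verify that the constants $h_0, \dots, h_3, p_1, d_1, d_2, d_3$ are chosen so that simultaneously the accumulated rounding error keeps $y_i - y x_i$ within $10^{-p_1}$ of $w_i - \lambda v_i$ and $\height(z_i) + \logdegree(z_i) < p_1$, so that a $p_1$-decimal-place comparison genuinely decides whether the algebraic number $z_i$ is zero. This reduces to careful unwinding of the definitions; the only thing worth flagging is that although the minimal polynomials of the $w_i$ (and of $\lambda$) may have astronomically large coefficients, the numbers themselves are only of size about $10^{h_1}$, which is exactly why the error term $y(x_i - v_i)$ coming from the matrix multiplication in Stage~\ref{stp:compute_image} is absorbed by the margin built into $d_1$.
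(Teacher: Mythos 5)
Your proof is correct and follows essentially the same route as the paper's: bound the discrepancy between the computed quantity $y_i - y x_i$ and the exact algebraic number $w_i - \lambda v_i$ using the precision estimates recorded before the proposition, then combine the height/degree bound with Lemma~\ref{lem:algebraic_approximations} to upgrade ``agrees with $0$ to $p_1$ places'' to exact vanishing, and conversely. The only differences are cosmetic --- you make the geometric identification $w = \calT'(\calL) = h(\calT)(\calL)$ and the normalisation $\lambda = \sum w_i$ explicit, and you bound $|y|$ by $10^{h_1}$ via the matrix entries rather than by $10^{h_3+\zeta}$ via heights as the paper does --- both of which are fine.
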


\begin{proof}
Firstly note that $\lambda, y < 10^{h_3 + \zeta}$ and so $y x_i$ and $\lambda v_i$ agree to at least $d_3 - \zeta - h_3$ decimal places. Therefore, if $y_i - y x_i = 0$ to $p_1$ decimal places then $w_i - \lambda v_i = 0$ to at least $p_1 - 1$ places. However
\[ \height(w_i - \lambda v_i) + \logdegree(w_i - \lambda v_i) \leq h_1 + h_2 + h_3 + 2 \zeta^2 \leq p_1 - 1 \]
and so again by Lemma~\ref{lem:algebraic_approximations} we have that $w_i = \lambda v_i$. Hence $\calL$ is projectively invariant under $h$.

Conversely, if $\calL$ is projectively invariant under $h$ then $w_i - \lambda v_i = 0$ and so 
\[ |y_i - y x_i| \leq 10^{-d_2} \leq 10^{-p_1} + 10^{-(d_3 - \zeta - h_3)}. \]
Hence $y_i = y x_i$ to at least $p_1$ decimal places and so Stage~\ref{stp:check_invariant} of the main algorithm will complete.
\end{proof}

Note that completing this stage shows that $(v_1 \; \cdots \; v_\zeta)^T$ is an eigenvector of $A_i$. Hence each $v_i$ lies in $\QQ(\lambda)$ and so any linear combination of them is also an algebraic number of degree at most $\zeta$.

\begin{proposition}
\label{prop:stable_lamination}
Stage~\ref{stp:check_stable} of the main algorithm completes if and only if $\calL$ is stable, that is, $\lambda > 1$.
\end{proposition}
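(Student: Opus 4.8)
The plan is to follow the same template as Propositions~\ref{prop:exists_lamination}--\ref{prop:invariant_lamination}: translate between the finite-precision inequality tested in Stage~\ref{stp:check_stable} and the exact inequality $\lambda > 1$, using on one side that the computed quantity $y$ agrees with the algebraic number $\lambda$ to at least $d_3$ decimal places, and on the other side that $\lambda$ (hence $\lambda - 1$) is an algebraic number of controlled degree and height, so that $\lambda - 1$ is either zero or bounded away from zero by Lemma~\ref{lem:algebraic_approximations}. Recall from the discussion before Proposition~\ref{prop:invariant_lamination} that $\lambda$ has degree at most $\zeta$ and height at most $h_3$; hence by Fact~\ref{fct:heights} the number $\lambda - 1$ has degree at most $\zeta$ and height at most $h_3 + 1$, so that $\height(\lambda - 1) + \logdegree(\lambda - 1) \leq h_3 + 1 + \zeta^2$.

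For the ``only if'' direction, suppose Stage~\ref{stp:check_stable} completes, so that $y > 1$ when only the first $p_1$ decimal places are compared. Then $y \geq 1 + 10^{-p_1}$, and since $|\lambda - y| \leq 10^{-d_3}$ we obtain $\lambda \geq 1 + 10^{-p_1} - 10^{-d_3} > 1$, using that $d_3 = p_1 + t + 2 - \zeta > p_1$. Hence $\calL$ is stable, and note that no appeal to Lemma~\ref{lem:algebraic_approximations} is needed in this direction.

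For the ``if'' direction, suppose $\lambda > 1$. If $\lambda \geq 2$ then $y \geq \lambda - 10^{-d_3} > 1$ trivially and the check passes, so we may assume $1 < \lambda < 2$; then $\lambda - 1$ is a nonzero algebraic number whose integer part is $0$, so by Lemma~\ref{lem:algebraic_approximations} not all of its first $\height(\lambda - 1) + \logdegree(\lambda - 1) \leq h_3 + 1 + \zeta^2$ decimal places vanish, whence $\lambda - 1 \geq 10^{-(h_3 + 1 + \zeta^2)}$. Unwinding the definitions of the constants in the boxed algorithm gives $h_3 + 1 + \zeta^2 \leq p_1 - 1$ (indeed $h_3 = 2\zeta^2 h_1 + \zeta^2 + \zeta$ whereas $p_1 \geq 4\zeta^2 h_1 + 6\zeta^2$), so $\lambda - 1 \geq 10^{-(p_1 - 1)}$ and therefore $y \geq \lambda - 10^{-d_3} \geq 1 + 10^{-(p_1-1)} - 10^{-d_3} > 1 + 10^{-p_1}$. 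Thus $y > 1$ already in its first $p_1$ decimal places and Stage~\ref{stp:check_stable} completes.

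I do not expect any genuine obstacle here: the two points requiring care are the elementary case split ($\lambda \geq 2$ versus $1 < \lambda < 2$) forced by Lemma~\ref{lem:algebraic_approximations} being stated only for numbers whose integer part is zero, and the verification of the numerical inequalities $h_3 + 1 + \zeta^2 \leq p_1 - 1$ and $p_1 < d_3$ against the definitions of $h_1, h_3, t, p_1$ and $d_1$ in the boxed algorithm — both of which hold comfortably.
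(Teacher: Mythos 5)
Your proof is correct and follows essentially the same template as the paper's: bound $\height(\lambda-1)+\logdegree(\lambda-1)$ by the constants, use Lemma~\ref{lem:algebraic_approximations} to separate $\lambda-1$ from $0$ in the ``if'' direction, and compare $y$ with $\lambda$ to $d_3$ places. Your observation that the strict inequality in the ``only if'' direction needs no appeal to Lemma~\ref{lem:algebraic_approximations}, and your explicit case split $\lambda \geq 2$ versus $1 < \lambda < 2$, are harmless refinements of the paper's argument rather than a different route.
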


\begin{proof}
If $y - 1 > 0$ to $p_1$ decimal places then $\lambda - 1 > 0$ to at least $p_1 - 1$ decimal places. However
\[ \height(\lambda - 1) + \logdegree(\lambda - 1) \leq h_3 + \zeta \leq p_1 - 1. \]
and so again by Lemma~\ref{lem:algebraic_approximations} we have that $\lambda > 1$. Hence $\calL$ is stable.

Conversely if $\calL$ is stable then $\lambda > 1 + 10^{-(h_3 + \zeta)}$. As $\lambda$ and $y$ agree to at least $d_3$ decimal places we have that 
\[ y > 1 + 10^{-(h_3 + \zeta)} - 10^{-d_3} > 1 + 10^{-p_1}. \]
Hence $y > 1$ to at least $p_1$ decimal places and so Stage~\ref{stp:check_stable} completes.
\end{proof}

Following Section~\ref{sec:train_tracks}, let $T$ be the measured train track obtained from $\calT$ using $v_1, \ldots, v_\zeta$ and let $T_i \defeq s^i(T) = (\tau_i, \mu_i)$. Similarly, let $T'$ be the measured train track obtained from $\calT$ using $x_1, \ldots, x_\zeta$ instead and let $T'_i \defeq s^i(T') = (\tau'_i, \mu'_i)$. We let $v_i^k$ denote the weights on the branches of $T_k$ and $x_i^k$ denote the weights on the branches of $T'_k$.

\begin{proposition}
\label{prop:filling_lamination}
Stage~\ref{stp:check_filling} of the main algorithm completes if and only if $\calL$ is filling.
\end{proposition}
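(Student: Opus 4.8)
The plan is to show that the maximal splitting sequence computed from the approximate weights $x_1,\ldots,x_\zeta$ agrees combinatorially with the one computed from the true algebraic weights $v_1,\ldots,v_\zeta$, so that checking the filling property for $s^t(T')$ is equivalent to checking it for $s^t(T)$; the conclusion then follows from Corollary~\ref{cor:filling_lamination}. First I would invoke the propositions already established: by Propositions~\ref{prop:exists_lamination} and~\ref{prop:invariant_lamination}, $v_1,\ldots,v_\zeta$ are the edge coordinates of a measured lamination $\calL$ with $h(\calL) = \lambda \cdot \calL$, and by the closing remark all the $v_i$ lie in the degree-$\le\zeta$ field $\QQ(\lambda)$. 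Since the maximal splitting operation is, by Proposition~\ref{prop:flip_intersection_laminations}-style formulae (the branch split formula $\mu(e') = |\mu(a)-\mu(b)|$), a piecewise-linear map given by taking maxima/differences of coordinates, the weights $v_i^k$ on $T_k$ for $k \le t$ are obtained from the $v_i$ by at most $t$ successive applications of such operations. Hence each $v_i^k$ is an algebraic number of degree at most $\zeta$, and I would track its height: it grows by $O(1)$ per splitting step (each step does a bounded number of additions and sign comparisons), so after $t$ steps $\height(v_i^k) + \logdegree(v_i^k) = O(t + \zeta h_1)$, which by the choice of $p_1 = 2\zeta^2(2h_1 + t + 3)$ is at most $p_1 - 1$.

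Next I would argue that the combinatorics of the splitting sequence is determined by the \emph{signs} of the quantities $v_a^k - v_b^k$ (which branches have maximal measure, and in which of the three cases of Figure~\ref{fig:splitting_branch} each split lands). By Lemma~\ref{lem:algebraic_approximations}, since these differences are algebraic numbers whose height-plus-log-degree is below $p_1 - 1$, their signs — equality versus strict inequality, and the direction of the inequality — are correctly determined by comparing only the first $p_1$ decimal places. So it remains to show that the approximate weights $x_i^k$ stay within $10^{-(d_1 - k\cdot O(\zeta))}$ of $v_i^k$ and that the arithmetic is carried out to enough places: $d_1 = p_1 + t + \zeta h_1 + 2$ is chosen precisely so that after the initial error $10^{-d_1}$ in $x_i$ versus $v_i$, plus the rounding error accumulated over $t$ steps of calculations done to $d_1$ places, the error in $x_i^t$ versus $v_i^t$ is still small compared with $10^{-p_1}$. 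Therefore at every branch of every $T_k'$ with $k \le t$, the comparison made by the algorithm (maximal measure? which split case?) matches the true comparison for $T_k$, so $\tau'_t = \tau_t$ as train tracks.

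Having established $\tau'_t = \tau_t$, I conclude: Stage~\ref{stp:check_filling} accepts iff every complementary region of $\tau'_t = \tau_t$ is a disk or once-marked disk, i.e. iff $T_t = s^t(T)$ is filling in the sense of the definition preceding Lemma~\ref{lem:filling_axis}. By Corollary~\ref{cor:filling_lamination} — using that $t = 24\zeta K \ell(p)^2 \ge 6\zeta K\, d(T,\widehat{h}(T))^2$ because $d(T,\widehat{h}(T)) \le 2\ell(p)$ as noted at the end of Section~\ref{sec:train_tracks} — this holds iff $\calL$ is filling. This gives both directions of the biconditional.

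I expect the main obstacle to be the bookkeeping in the middle paragraph: one must verify that the degree and height of the iterated splitting coordinates really are controlled (degree is easy since they stay in $\QQ(\lambda)$, but the height bound must survive $t$ iterations of taking differences, so the per-step increment has to be genuinely $O(1)$, not $O(\zeta)$, or the constants in $p_1$ and $d_1$ will not be large enough), and in parallel that the floating-point error in the $x_i^k$ neither the rounding-to-$d_1$-places error accumulate beyond the $10^{-p_1}$ comparison threshold. A secondary subtlety is that a single maximal splitting can split many branches at once, so one should argue the sign data needed is exactly the set of pairwise comparisons $\{v_a^k - v_b^k\}$ among incident branches, all of which are simultaneously correctly evaluated — there is no "cascading" dependence that would force the precision to degrade faster than linearly in $k$.
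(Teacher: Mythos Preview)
Your proposal is correct and follows essentially the same route as the paper: an induction showing that the combinatorial train tracks $\tau_k$ and $\tau'_k$ agree for all $k\le t$ (by controlling the height of the $v_i^k$ and the accumulated error $|v_i^k-x_i^k|$, then invoking Lemma~\ref{lem:algebraic_approximations} to see that all branch comparisons are decided identically), followed by Corollary~\ref{cor:filling_lamination}. The paper makes the bookkeeping you flag as the ``main obstacle'' explicit: it records $\height(v_i^k)\le 3\zeta(k+2h_1+2)$ and, crucially for the error, the per-step bound $|v_i^{k+1}-x_i^{k+1}|\le |v_i^k-x_i^k|+|v_j^k-x_j^k|\le 10^{-(d_1-k-2)}$, confirming that the error loses only one decimal place per split (genuinely $O(1)$, not $O(\zeta)$), which is exactly what the choice of $d_1$ requires.
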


\begin{proof} We will show that $T'_t$ is filling if and only if $T_t$ is. The result then follows directly from Corollary~\ref{cor:filling_lamination}.

We begin by claiming that for $1 \leq k \leq t$:
\[ \tau_k = \tau'_k \inlineand |v_i^k - x_i^k| \leq 10^{-(d_1 - k - 1)}. \]
To see this first note that $v_i^k$ is an algebraic number of degree at most $\zeta$ and $\height(v_i^k) 
\leq 3 \zeta (k + 2 h_1 + 2)$ and that $|v_i^0 - x_i^0| \leq 10^{-(d_1 - 1)}$. Now suppose that $\tau_k = \tau'_k$ and $|v_i^k - x_i^k| \leq 10^{-(d_1 - k - 1)}$ for some $1 \leq k < t$. Then by Lemma~\ref{lem:algebraic_approximations}, $v_i^k \geq v_j^k$ if and only if $x_i^k \geq x_j^k$ to $p_1$ decimal places. Therefore the $x_i^k$--maximal branches are the $v_i^k$--maximal branches and so $\tau_{k+1} = \tau'_{k+1}$. Furthermore 
\[ |v_i^{k+1} - x_i^{k+1}| \leq |v_i^k - x_i^k| + |v_j^k - x_j^k| \leq 10^{-(d_1 - k - 2)} \]
and so the claim holds by induction on $k$.

Finally, again by Lemma~\ref{lem:algebraic_approximations}, we have that $v_i^t > 0$ if and only if $x_i^t > 0$ to $p_1$ decimal places. Hence $T_t$ is filling if and only if $T'_t$ is and so Stage~\ref{stp:check_filling} completes if and only if $\calL$ is filling.
\end{proof}

Combining these propositions we obtain:

\begin{corollary}
\label{cor:pA_if_accepted}
Suppose that $h \in \Mod(S)$ is a mapping class and $p$ is a path from $\calT$ to $h(\calT)$. If there is a certificate that the main algorithm accepts then $h$ is pseudo-Anosov. \qed
\end{corollary}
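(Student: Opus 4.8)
The plan is to obtain the corollary as the formal conjunction of Propositions~\ref{prop:approx_algebraic}--\ref{prop:filling_lamination}, which were set up precisely to analyse the ten stages of the main algorithm one group at a time. The only substantive point is that a certificate being \emph{accepted} means, by definition, that every check passes, so all ten stages complete simultaneously; each proposition was proved under the hypothesis that its relevant stages complete, and the objects $v_1,\ldots,v_\zeta$, $\calL$ and $\lambda$ appearing in them were fixed once and for all in Section~\ref{sub:accept_pA}, so the implications can be chained without ambiguity.

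Concretely, I would run through the stages in order. From Stages~\ref{stp:check_height}--\ref{stp:check_approx} and Proposition~\ref{prop:approx_algebraic} we obtain unique algebraic numbers $v_i$ of degree at most $\zeta$ and height at most $h_1$ with $|x_i - v_i| \leq 10^{-d_1}$. From Stages~\ref{stp:check_triangle}--\ref{stp:check_peripheral} and Proposition~\ref{prop:exists_lamination}, the tuple $(v_1,\ldots,v_\zeta)$ is the edge vector of a genuine measured lamination $\calL \in \ML(S)$ (in particular it is nonzero, by Lemma~\ref{lem:is_multicurve_LP}). From Stage~\ref{stp:check_unitary} and Proposition~\ref{prop:unitary_lamination}, $\calL$ is unitary. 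From Stages~\ref{stp:compute_image}--\ref{stp:check_invariant} and Proposition~\ref{prop:invariant_lamination}, $h(\calL) = \lambda \cdot \calL$, so $\calL$ is projectively invariant under $h$; from Stage~\ref{stp:check_stable} and Proposition~\ref{prop:stable_lamination}, $\lambda > 1$. Finally, from Stage~\ref{stp:check_filling} and Proposition~\ref{prop:filling_lamination}, $\calL$ is filling.

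To conclude, I would simply invoke the definition of pseudo-Anosov recalled in the introduction: $\calL$ is a measured lamination that is projectively invariant under $h$ and filling, hence $h$ is pseudo-Anosov. The extra facts that $\calL$ is unitary and that $\lambda > 1$ are not strictly needed for this direction (they pin down the stable lamination and are what make the converse direction go through, where one must actually exhibit an accepting certificate), but they cost nothing to record here and match the usual characterisation coming from Thurston's classification.

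I do not anticipate a genuine obstacle: the argument is bookkeeping. The one thing to be careful about is the alignment of quantifiers — that ``accepted'' really does unlock all the conditional hypotheses at once, and that the algebraic numbers, the lamination and the dilatation named across the six propositions are literally the same objects rather than merely compatible ones. Given the ``fix $\ldots$'' conventions established in Section~\ref{sub:accept_pA}, checking this is routine, so the proof of the corollary reduces to citing the six propositions and the definition of pseudo-Anosov.
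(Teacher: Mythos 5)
Your proposal is correct and is exactly the paper's argument: the corollary is stated with an immediate \qed following the sentence ``Combining these propositions we obtain,'' i.e.\ it is precisely the conjunction of Propositions~\ref{prop:approx_algebraic}--\ref{prop:filling_lamination} applied to the objects $v_i$, $\calL$ and $\lambda$ fixed in Section~\ref{sub:accept_pA}, together with the definition of pseudo-Anosov from the introduction. Your added remarks on quantifier alignment and on unitarity and $\lambda>1$ being inessential for this direction are accurate but do not change the route.
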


\subsection{Pseudo-Anosovs have acceptable certificates}

Finally, we show the converse to Corollary~\ref{cor:pA_if_accepted}. To do this we first require some additional bounds on the heights of certain algebraic numbers.

\begin{definition}
Suppose that $\alpha \in \QQbar$ is an algebraic number. A matrix $M$ is \emph{$\alpha$--shifted} if its entries are of the form $a_{ij} = b_{ij} + c_{ij} \alpha$, where $b_{ij}$ and $c_{ij}$ are integers. We say that such a matrix is \emph{$k$--bounded} if each $b_{ij}$ and $c_{ij}$ is.
\end{definition}

\begin{proposition}
\label{prop:height_det}
If $M$ is a $k$--bounded, $m \times m$, $\alpha$--shifted matrix then
\[ \height(\det(M)) \leq m^2 (k + \log(m) + \height(\alpha)). \]
\end{proposition}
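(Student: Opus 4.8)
The plan is to expand $\det(M)$ via the Leibniz formula and bound the height of each summand, then combine the $m!$ terms. First I would write
\[ \det(M) = \sum_{\pi \in S_m} \sgn(\pi) \prod_{j=1}^m a_{j\pi(j)}, \]
where each $a_{j\pi(j)} = b_{j\pi(j)} + c_{j\pi(j)} \alpha$ is an $\alpha$--shifted entry with $k$--bounded integer parts. Since a $k$--bounded integer has absolute value at most $10^{O(k)}$, we have $\height(b_{ij}), \height(c_{ij}) \le k + O(1)$; I will absorb the $O(1)$ into the $\log(m)$ term at the end (or, more honestly, observe that ``$k$--bounded'' gives $\le k$ digits so the integer is $< 10^k$, hence height $\le k$). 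Then $\height(a_{ij}) \le \height(b_{ij}) + \height(c_{ij} \alpha) + 1 \le 2k + \height(\alpha) + 1$ by the additive and multiplicative height bounds of Fact~\ref{fct:heights}. Actually a cleaner route: $a_{ij}$ is a root of a degree-$\le 2$ (indeed degree-$\le \deg(\alpha)$, but we only need a crude bound) shift/scale of $\mu_\alpha$, but the Fact~\ref{fct:heights} estimates suffice directly.

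Next I would bound the product $\prod_{j=1}^m a_{j\pi(j)}$: by the multiplicative height inequality $\height(\alpha\beta) \le \height(\alpha) + \height(\beta)$ applied $m-1$ times, this product has height at most $m \cdot (2k + \height(\alpha) + 1)$, or more carefully $m(2k + \height(\alpha)) + (m-1)$. Then $\det(M)$ is a sum of $m!$ such products (with signs, which do not change heights), so by the additive inequality applied $m! - 1$ times, $\height(\det(M)) \le \sum_{\pi} \height(\text{product}_\pi) + (m! - 1) \le m! \cdot [m(2k + \height(\alpha)) + (m-1)] + m! $. This is far too lossy — $\log(m!) \approx m \log m$ is fine but multiplying the per-term height by $m!$ is not. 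The fix is to use the sharper bound for sums noted in the Remark after Lemma~\ref{lem:algebraic_approximations}: when summing $N$ algebraic numbers, $\height$ of the sum is at most $\max_i \height(\alpha_i) + \log N$ only when they are rational/integer; in general we only have the crude $\sum \height(\alpha_i) + (N-1)$. So the genuinely correct approach must avoid treating $\det(M)$ as a blind sum of $m!$ algebraic numbers.

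The right approach, and the step I expect to be the main obstacle, is to work at the level of the minimal polynomial directly rather than iterating the coarse height arithmetic. Concretely: each entry $a_{ij} \in \QQ(\alpha)$, so $\det(M) \in \QQ(\alpha)$, a number field of degree $\le \deg(\alpha) =: n$. Write $M = B + \alpha C$ with $B, C$ integer matrices of entries bounded by $10^k$. Then $\det(B + xC) \in \ZZ[x]$ is a polynomial of degree $\le m$ whose coefficients are (signed) sums of products of $m$ entries; each coefficient has absolute value at most $m! \cdot 10^{mk} \cdot \binom{m}{j} \le m! \, 2^m \, 10^{mk}$, so $\height(\det(B+xC)) \le mk + \log(m!) + m\log 2 \le mk + O(m\log m)$. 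Now $\det(M) = \det(B + \alpha C)$ is the value of this integer polynomial $g(x)$ of height $\le mk + O(m \log m)$ at $\alpha$; by the last item of Fact~\ref{fct:heights}, a root of... no — $\det(M)$ is not presented as a root of $g$, it is $g(\alpha)$. So instead bound $\height(g(\alpha))$: writing $g(x) = \sum_{j=0}^m g_j x^j$, we get $g(\alpha) = \sum g_j \alpha^j$, a sum of $m+1$ algebraic numbers each of height $\le \height(g_j) + j\,\height(\alpha) \le mk + O(m\log m) + m\,\height(\alpha)$; summing $m+1$ of these via the additive inequality gives $\height(\det M) \le (m+1)[mk + m\,\height(\alpha) + O(m\log m)] + m$, which is $O(m^2(k + \height(\alpha) + \log m))$ — exactly the claimed form $m^2(k + \log m + \height(\alpha))$ up to the implicit constant. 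I would then tighten the constants by checking that the paper's ``$\le$'' with no explicit constant permits this, and by using $\height(g_j) \le mk + \log\binom{m}{j}\binom{m}{j} \le mk + 2m$ and $\log(m!) \le m \log m$ to land inside $m^2(k+\log m + \height(\alpha))$. The main delicacy is purely bookkeeping: making sure the factor of $m^2$ (and not $m^2 \log m$ or $m!$) emerges, which forces one to handle the $m!$ permutations at the level of polynomial coefficients — where cancellation in degree keeps the count to $\binom{m}{j}$ per coefficient — rather than after specializing $x = \alpha$.
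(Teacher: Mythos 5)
Your final argument is essentially the paper's own proof: the paper likewise expands $\det(M)$ as an integer polynomial $e_0 + \cdots + e_m\alpha^m$ in $\alpha$, bounds $\height(e_i) \leq mk + m\log(m)$ by absorbing the $m!$ permutation terms at the level of the integer coefficients (where the sharper $\max + \log(\text{count})$ bound applies), and only then sums the $m+1$ terms $e_i\alpha^i$ with the crude additive inequality. Your identification of the key delicacy --- that the $m!$ must be handled before specializing $x = \alpha$ --- is exactly the point of the paper's proof, and the remaining differences are only constant-tracking (the paper uses $\sum_i i\,\height(\alpha) = \tfrac{1}{2}m^2\height(\alpha)$ where you use the slightly looser $(m+1)m\,\height(\alpha)$), which you correctly flag as routine.
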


\begin{proof}
First note that we may expand $(b_1 + c_1 \alpha) \; \cdots \; (b_m + c_m \alpha)$ as a polynomial in $\alpha$ to obtain $d_0 + \cdots + d_m \alpha^m$. It then follows from Fact~\ref{fct:heights} that $\height(d_i) \leq m k$ as $b_i$ and $c_i$ are $k$--bounded integers .

Now consider the following expansion of $\det(M)$:
\[ \det(M) = \sum_{\sigma \in \Sym(m)} \sgn(\sigma) \prod_{i=1}^m \left(b_{i \sigma(i)} + c_{i \sigma(i)} \alpha\right) = e_0 + \cdots + e_n \alpha^m. \]
By applying the previous bound to the coefficients of $\prod_{i=1}^m \left(b_{i \sigma(i)} + c_{i \sigma(i)} \alpha\right)$ we have that
\[ \height(e_i) \leq m k + m \log(m). \]
Therefore:
\begin{eqnarray*}
\height(\det(M)) &\leq& \height\left(\sum e_i \alpha^i\right) \\
 &\leq& \sum \height(e_i \alpha^i) + m \log(2) \\
 &\leq& \sum \height(e_i) + \frac{1}{2} m^2 \height(\alpha) + m \log(2) \\
 &\leq& m^2 k + m^2 \log(m) + \frac{1}{2} m^2 \height(\alpha) + m \log(2) \\
 &\leq& m^2 (k + \log(m) + \height(\alpha))
\end{eqnarray*}
Thus showing the required bound.
\end{proof}

\begin{lemma}
\label{lem:height_eigenvector}
Suppose that $M$ is a $k$--bounded, $m \times m$, $\alpha$--shifted matrix and that $\det(M) \neq 0$. If $v = (\alpha_1 \; \cdots \; \alpha_m)^T$ is a vector of algebraic numbers such that $M \cdot v$ is a $k$--bounded vector of integers then
\[ \height(\alpha_i) \leq 2 m^2 (k + \log(m) + \height(\alpha)) \]
\end{lemma}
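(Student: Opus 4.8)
The plan is to apply Cramer's rule and then invoke the determinant height bound of Proposition~\ref{prop:height_det}. Since $\det(M) \neq 0$, the matrix $M$ is invertible over $\QQbar$, so the equation $M \cdot v = b$ (where $b$ denotes the $k$--bounded integer vector $M \cdot v$) has the unique solution $v = M^{-1} b$, and Cramer's rule gives
\[ \alpha_i = \frac{\det(M_i)}{\det(M)}, \]
where $M_i$ is the matrix obtained from $M$ by replacing its $i$th column with $b$.

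The key observation is that $M_i$ is again a $k$--bounded, $m \times m$, $\alpha$--shifted matrix: every entry not in the $i$th column is an entry of $M$, hence of the form $b_{jk} + c_{jk}\alpha$ with $b_{jk}, c_{jk}$ being $k$--bounded integers, while every entry of the $i$th column is an entry of $b$, which is a $k$--bounded integer and so has the form $b_j + 0 \cdot \alpha$. Therefore Proposition~\ref{prop:height_det} applies to both $M$ and $M_i$, yielding
\[ \height(\det(M)) \leq m^2\bigl(k + \log(m) + \height(\alpha)\bigr) \inlineand \height(\det(M_i)) \leq m^2\bigl(k + \log(m) + \height(\alpha)\bigr). \]

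Finally I would combine these using the multiplicativity properties of heights from Fact~\ref{fct:heights}, namely $\height(\beta\gamma) \leq \height(\beta) + \height(\gamma)$ and $\height(\gamma^{-1}) = \height(\gamma)$, to conclude
\[ \height(\alpha_i) \leq \height(\det(M_i)) + \height(\det(M)^{-1}) = \height(\det(M_i)) + \height(\det(M)) \leq 2 m^2\bigl(k + \log(m) + \height(\alpha)\bigr). \]
I do not anticipate a genuine obstacle here; the only point requiring a little care is checking that replacing a column of $M$ by the integer vector $b$ preserves the hypotheses of Proposition~\ref{prop:height_det} (in particular that integer entries count as $k$--bounded $\alpha$--shifted entries), and that $\det(M)\neq0$ is exactly what makes the Cramer quotient well defined.
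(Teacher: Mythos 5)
Your proposal is correct and is exactly the argument the paper intends: the paper's proof is the one-line remark that the bound ``follows from using Cramer's rule to determine $\alpha_i$ and Proposition~\ref{prop:height_det},'' and you have simply filled in the details (in particular the observation that replacing a column of $M$ by the $k$--bounded integer vector $b$ yields another $k$--bounded $\alpha$--shifted matrix, so Proposition~\ref{prop:height_det} applies to both numerator and denominator). No gaps.
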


\begin{proof}
This bound follows from using Cramer's rule to determine $\alpha_i$ and Proposition~\ref{prop:height_det}.
\end{proof}

\begin{lemma}
\label{lem:height_eigenvalue}
Suppose that $M$ is a $k$--bounded $m \times m$ integer matrix. If $\alpha$ is an eigenvalue of $M$ then $\height(\alpha) \leq m k + m \log(m) + 2m$.
\end{lemma}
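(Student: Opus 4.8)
The plan is to exhibit $\alpha$ as a root of the characteristic polynomial $\chi_M(x) \defeq \det(xI - M) \in \ZZ[x]$ and to control its height, then appeal to the final bullet of Fact~\ref{fct:heights}: a root of $f \in \ZZ[x]$ has height at most $\height(f) + 2\deg(f)$. Since $\chi_M$ is monic of degree $m$ and has $\alpha$ as a root, this reduces the lemma to the single estimate $\height(\chi_M) \le mk + m\log(m)$, from which $\height(\alpha) \le \height(\chi_M) + 2m \le mk + m\log(m) + 2m$, exactly the claimed bound.

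To bound $\height(\chi_M)$ I would run the computation from the proof of Proposition~\ref{prop:height_det}, but keeping $x$ a formal indeterminate rather than substituting a fixed algebraic number. The $(i,j)$ entry of $xI - M$ is $-m_{ij} + \delta_{ij} x$, so $xI - M$ is an ``$x$--shifted'' matrix whose constant parts $-m_{ij}$ are $k$--bounded integers and whose linear coefficients $\delta_{ij} \in \{0,1\}$ are $k$--bounded. Expanding $\det(xI-M)$ by the Leibniz formula and collecting powers of $x$ as $\chi_M(x) = e_0 + e_1 x + \cdots + e_m x^m$, the same argument as in Proposition~\ref{prop:height_det} gives $\height(e_i) \le mk + m\log(m)$ for every $i$: each of the $m!$ products $\prod_i (-m_{i\sigma(i)} + \delta_{i\sigma(i)} x)$ has coefficients of height at most $mk$, and summing over $\Sym(m)$ contributes a further $\log(m!) \le m\log(m)$. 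Hence $\height(\chi_M) = \log(\max_i |e_i|) \le mk + m\log(m)$. Equivalently, one can avoid citing Proposition~\ref{prop:height_det} altogether by recalling that $e_i$ is, up to sign, the sum of the $\binom{m}{i}$ minors of $M$ of size $m-i$, each a determinant of a $k$--bounded integer matrix and hence of height at most $mk + m\log(m)$ by the Leibniz expansion; the resulting estimate is the same.

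There is no real obstacle here. The only point worth flagging is that, in contrast to Proposition~\ref{prop:height_det} — where evaluating the determinant at the algebraic number $\alpha$ forces the extra $\tfrac{1}{2}m^2\height(\alpha)$ coming from the powers $\alpha^i$ — here we never evaluate $\chi_M$, we only read off its integer coefficients, so that term never appears. Assembling $\height(\chi_M) \le mk + m\log(m)$ with the root bound of Fact~\ref{fct:heights} then yields $\height(\alpha) \le mk + m\log(m) + 2m$, as required.
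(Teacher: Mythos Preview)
Your proof is correct and follows essentially the same approach as the paper: bound the coefficients of the characteristic polynomial $\chi_M$ by $mk + m\log(m)$ via the Leibniz expansion (as in Proposition~\ref{prop:height_det}), then apply the root bound from Fact~\ref{fct:heights} to obtain $\height(\alpha) \le mk + m\log(m) + 2m$. The paper's proof is just a terser version of what you wrote.
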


\begin{proof}
Again, following the proof of Proposition~\ref{prop:height_det}, we have that if $\chi_M(x) = e_0 + \cdots + e_n x^m$ is the characteristic polynomial of $M$ then $\height(e_i) \leq m k + m \log(m)$. As $\alpha$ is a root of this it follows from Fact~\ref{fct:heights} that
\[ \height(\alpha) \leq m k + m \log(m) + 2m. \qedhere \]
\end{proof}

We now have the tools to prove existence of a certificate that the main algorithm will accept for pseudo-Anosovs.

\begin{theorem}
\label{thrm:pA_have_certifiacte}
Suppose that $h \in \Mod(S)$ is a mapping class and $p$ is a path from $\calT$ to $h(\calT)$. If $h$ is pseudo-Anosov then there is a certificate that the main algorithm will accept.
\end{theorem}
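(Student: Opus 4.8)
The plan is to build an explicit certificate out of the invariant data of a pseudo-Anosov $h$ and then to invoke the ``if and only if'' statements of Propositions~\ref{prop:exists_lamination}, \ref{prop:unitary_lamination}, \ref{prop:invariant_lamination}, \ref{prop:stable_lamination} and~\ref{prop:filling_lamination} for Stages~\ref{stp:check_triangle}--\ref{stp:check_filling}; the new work is concentrated in Stages~\ref{stp:check_height}--\ref{stp:check_approx}. Since $h$ is pseudo-Anosov it has a stable lamination $\calL$, which is projectively invariant and filling; rescale $\calL$ so that it is unitary with respect to $\calT$. Put $v_j \defeq \calL(e_j)$, let $f_j \in \ZZ[x]$ be the minimal integral polynomial of $v_j$, and let $x_j$ be the truncation of $v_j$ to $d_1$ decimal places. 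I claim the main algorithm accepts the certificate $x_1, \ldots, x_\zeta, f_1, \ldots, f_\zeta$, and it suffices to check that all ten stages complete on this input.

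The heart of the argument is Stage~\ref{stp:check_height}. As $\calL$ is projectively invariant, $\calT(\calL)$ is an eigenvector of one of the matrices $A_i$ of Lemma~\ref{lem:encoding_LP} with some eigenvalue $\lambda > 1$ (cf.\ the discussion following Proposition~\ref{prop:invariant_lamination}), and this $A_i$ is $\zeta \times \zeta$ and $\ell(p)$--bounded. Lemma~\ref{lem:height_eigenvalue} then gives $\height(\lambda) \leq \zeta\ell(p) + \zeta\log(\zeta) + 2\zeta$ and $\deg(\lambda) \leq \zeta$, and since each $v_j \in \QQ(\lambda)$ it follows that $\deg(f_j) = \deg(v_j) \leq \zeta$. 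To bound $\height(v_j)$, I would use that the $\lambda$--eigenspace of $A_i$ is one-dimensional (this is where pseudo-Anosovness genuinely enters, via uniqueness of the stable lamination), so that $\calT(\calL)$ is the unique solution of the linear system consisting of $\zeta - 1$ independent rows of $(A_i - \lambda I)v = 0$ together with the normalisation $\sum_j v_j = 1$. Its coefficient matrix $M$ is invertible, $\lambda$--shifted and $\ell(p)$--bounded, and $M \cdot \calT(\calL)$ is a $1$--bounded integer vector, so Lemma~\ref{lem:height_eigenvector} gives $\height(v_j) \leq 2\zeta^2(\ell(p) + \log(\zeta) + \height(\lambda))$; substituting the bound on $\height(\lambda)$ and unwinding $h_0 = \zeta^4(\ell(p) + 6)$ shows $\height(f_j) \leq h_0$. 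This is the step I expect to be the main obstacle: proving that the eigenspace is one-dimensional needs the most care, and one must then check the (routine) inequality that the resulting bound does not exceed $h_0$.

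Stages~\ref{stp:check_nonneg} and~\ref{stp:check_approx} are then short. Unitarity and non-negativity of $\calL \in \ML(S)$ give $0 \leq v_j \leq 1$, hence $0 \leq x_j \leq 1$, so Stage~\ref{stp:check_nonneg} completes. By construction $v_j$ lies in $[x_j, x_j + 10^{-d_1}) \subseteq (x_j - 10^{-d_1}, x_j + 10^{-d_1})$ and is a root of the squarefree polynomial $f_j$; a Mahler-type root-separation bound for a polynomial of degree at most $\zeta$ and height at most $h_0$, together with the fact that $d_1 = p_1 + t + \zeta h_1 + 2$ is chosen much larger than this separation exponent, shows that $v_j$ is the only root of $f_j$ in the interval, so $f_j$ takes opposite signs at $x_j \pm 10^{-d_1}$ and Stage~\ref{stp:check_approx} completes.

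With Stages~\ref{stp:check_height}--\ref{stp:check_approx} complete, Proposition~\ref{prop:approx_algebraic} identifies its distinguished algebraic numbers with our $v_j$. Since $\calL$ witnesses that $v_1, \ldots, v_\zeta$ corresponds to a measured lamination which is unitary, projectively invariant under $h$ with $\lambda > 1$, and filling, and since Stage~\ref{stp:compute_image} is executable by Lemma~\ref{lem:encoding_LP} and Remark~\ref{rem:compute_image}, the converse directions of Propositions~\ref{prop:exists_lamination}, \ref{prop:unitary_lamination}, \ref{prop:invariant_lamination}, \ref{prop:stable_lamination} and~\ref{prop:filling_lamination} show that Stages~\ref{stp:check_triangle}--\ref{stp:check_filling} complete. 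Hence the main algorithm accepts the certificate, as required.
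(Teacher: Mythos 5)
Your proposal is correct and follows essentially the same route as the paper: take the unitary-normalised stable lamination, bound the degree and height of the $v_j$ by $\zeta$ and $h_0$ via Lemmas~\ref{lem:height_eigenvector} and~\ref{lem:height_eigenvalue}, verify Stages~\ref{stp:check_nonneg}--\ref{stp:check_approx} by root separation for the squarefree minimal polynomials, and then invoke the converse directions of Propositions~\ref{prop:exists_lamination}--\ref{prop:filling_lamination}. The only difference is that you spell out how Lemma~\ref{lem:height_eigenvector} is actually applied (an invertible $\lambda$--shifted system built from $\zeta-1$ rows of $A_i - \lambda I$ plus the normalisation, which needs the eigenspace to be one-dimensional); the paper cites the lemma without this detail, so your flagged ``main obstacle'' is a gap in both write-ups rather than a divergence from the paper's argument.
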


\begin{proof}
Let $\calL = \calL^+(h)$ be the stable lamination of $h$ scaled such that $||\calT(\calL)|| = 1$. Let $v_i \defeq \calL(e_i)$, where $e_i$ are the edges of $\calT$, and $\mathbf{v} \defeq (v_1 \; \cdots \; v_\zeta)^T$. Let $\{A_i\}$ and $\{B_i\}$ be the matrices of Lemma~\ref{lem:encoding_LP}. There is $B_i$ such that $B_i \cdot \mathbf{v} \geq 0$ and so $\mathbf{v}$ is an eigenvector of $A_i$. Therefore, as $A_i$ is $\ell(p)$--bounded, each $v_i$ is an algebraic number of degree at most $\zeta$ and by Lemma~\ref{lem:height_eigenvector} and Lemma~\ref{lem:height_eigenvalue} we have that
\begin{eqnarray*}
\height(v_i) &\leq& 2 \zeta^2 (\ell(p) + \log(\zeta) + \zeta \ell(p) + \zeta \log(\zeta) + 2 \zeta) \\
 &\leq& 2 \zeta^2 ((\zeta + 1) \ell(p) + (\zeta + 1) \log(\zeta) + 2 \zeta) \\
 &\leq& \zeta^4 (\ell(p) + 6) \\
 &=& h_0.
\end{eqnarray*}

Now let $x_i$ be a decimal approximation of $v_i$, correct to $d_1$ decimal places, and let $f_i$ be the minimal integral polynomial of $v_i$. We claim that the certificate
\[ x_1, \ldots, x_\zeta, f_1, \ldots, f_\zeta \]
is accepted by the main algorithm. To see this consider the first three stages of the main algorithm:
\begin{enumerate}
\item By definition each $f_i$ has degree at most $\zeta$ and height at most $h_0$. Hence this stage will pass.
\item As $0 \leq v_i \leq 1$ and $|x_i - v_i| \leq 10^{-d_1}$, we have that $0 \leq x_i \leq 1$ to at least $p_1$ decimal places. Hence this stage will pass.
\item By Fact~\ref{fct:heights}, two distinct roots of $f_i$ must be separated by at least $10^{-(2 h_1 + \zeta)}$ and as $f_i$ is minimal it has no repeated roots. Hence, $v_i$ is the unique root of $f_i$ in $[x_i - 10^{-d_1}, x_i + 10^{-d_1}]$ and so $f_i(x_i \pm 10^{-d_1})$ must have different signs. Hence this stage will pass and the algebraic numbers found will be $v_1, \ldots, v_\zeta$.
\end{enumerate}
Finally, note that as $v_1, \ldots, v_\zeta$ represents $\calL$ which is unitary (with respect to $\calT$), projectively invariant, stable and filling Stages~\ref{stp:check_unitary}--\ref{stp:check_filling} must complete by Proposition~\ref{prop:unitary_lamination}, Proposition~\ref{prop:invariant_lamination}, Proposition~\ref{prop:stable_lamination} and Proposition~\ref{prop:filling_lamination}. Proving the claim that the main algorithm will accept this certificate.
\end{proof}

Together with Corollary~\ref{cor:pA_if_accepted} this shows:

\begin{theorem}
\label{thrm:algorithm_correct}
Suppose that $h \in \Mod(S)$ is a mapping class and $p$ is a path from $\calT$ to $h(\calT)$. There is a certificate that the main algorithm will accept if and only if $h$ is pseudo-Anosov. \qed
\end{theorem}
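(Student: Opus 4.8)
The statement is a biconditional, so the plan is to establish the two implications separately and then observe that they are exact converses of each other; at this level essentially all of the work has already been done in the preceding subsections, and the proof of the theorem itself is one of assembly.

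For the direction ``acceptable certificate $\Rightarrow$ pseudo-Anosov'' I would simply invoke Corollary~\ref{cor:pA_if_accepted}. That corollary is in turn the concatenation of Propositions~\ref{prop:approx_algebraic}--\ref{prop:filling_lamination}: Stages~\ref{stp:check_height}--\ref{stp:check_approx} force each $x_i$ to lie within $10^{-d_1}$ of a unique algebraic number $v_i$ of degree at most $\zeta$ and height at most $h_1$; Stages~\ref{stp:check_triangle}--\ref{stp:check_peripheral} force $(v_1,\dots,v_\zeta)$ to be the edge vector $\calT(\calL)$ of an honest measured lamination $\calL$ (using Lemma~\ref{lem:is_multicurve_LP}); Stage~\ref{stp:check_unitary} makes $\calL$ unitary; Stages~\ref{stp:compute_image}--\ref{stp:check_invariant} make $\calL$ projectively invariant under $h$ with factor $\lambda = \sum w_i$ (using the encoding matrices $\{A_i\},\{B_i\}$ of Lemma~\ref{lem:encoding_LP}); Stage~\ref{stp:check_stable} gives $\lambda>1$; and Stage~\ref{stp:check_filling}, via Corollary~\ref{cor:filling_lamination} together with the fact that $t = 24\zeta K\ell(p)^2 \geq 6\zeta K\,d(T,\widehat{h}(T))^2$, certifies that $\calL$ is filling. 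A projectively invariant, filling measured lamination is precisely a pseudo-Anosov witness, so $h$ is pseudo-Anosov.

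For the direction ``pseudo-Anosov $\Rightarrow$ acceptable certificate'' I would invoke Theorem~\ref{thrm:pA_have_certifiacte}. There one takes $\calL = \calL^+(h)$ rescaled so that $\|\calT(\calL)\|=1$, and the key input is the height bound $\height(v_i)\leq h_0$: since $\calT(\calL)$ is an eigenvector of one of the $\ell(p)$--bounded matrices $A_i$, this follows from the determinant and Cramer-type estimates of Proposition~\ref{prop:height_det}, Lemma~\ref{lem:height_eigenvector} and Lemma~\ref{lem:height_eigenvalue}. One then sets $x_i$ to be the $d_1$--digit truncation of $v_i$ and $f_i$ to be its minimal integral polynomial; Stages~\ref{stp:check_height}--\ref{stp:check_approx} pass by construction (degree, height, and the fact that a minimal polynomial has simple, well-separated roots), and Stages~\ref{stp:check_unitary}--\ref{stp:check_filling} pass by the ``converse'' halves of Propositions~\ref{prop:unitary_lamination}--\ref{prop:filling_lamination}, since $\calL$ genuinely is unitary, projectively invariant, stable and filling.

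Combining these two implications yields the stated ``if and only if''. There is no real obstacle at the level of this theorem; all the difficulty was front-loaded, on one side into the train-track bounds of Section~\ref{sec:train_tracks} that justify the particular choice of $t$, and on the other into the numerical-precision bookkeeping guaranteeing that comparing only the first $p_1$ of the $d_1$ computed decimal places never alters a sign. If I had to single out the most delicate underlying step it would be Proposition~\ref{prop:filling_lamination}: propagating the approximation $|v_i^k - x_i^k|\leq 10^{-(d_1-k-1)}$ through $t$ maximal splittings while keeping the combinatorial types $\tau_k=\tau'_k$ in lockstep, and verifying that $d_1$ was chosen large enough that no sign flip occurs during this process.
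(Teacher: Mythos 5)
Your proof is correct and is exactly the paper's argument: the theorem is obtained by combining Corollary~\ref{cor:pA_if_accepted} (itself the concatenation of Propositions~\ref{prop:approx_algebraic}--\ref{prop:filling_lamination}) for one direction with Theorem~\ref{thrm:pA_have_certifiacte} for the other. Your supporting remarks, including the observation that $t \geq 6\zeta K\,d(T,\widehat{h}(T))^2$ via $d(T,\widehat{h}(T)) \leq 2\ell(p)$, match the paper's reasoning.
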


\subsection{Analysis}
\label{sub:analysis}

Finally, we consider the running time of the main algorithm.

\begin{theorem}
\label{thrm:algorithm_analysis}
The main algorithm will accept or reject a certificate of $h$ in $O(\ell(p)^4)$ time.
\end{theorem}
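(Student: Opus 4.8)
The strategy is to go through the ten stages of the main algorithm one at a time and bound the cost of each, keeping careful track of the number of digits carried by every intermediate quantity. The key observation is that every computation is performed to $d_1$ decimal places, and $d_1 \in O(\ell(p)^2)$ (since $p_1, t$ and $\zeta h_1$ are all $O(\ell(p)^2)$, with $\zeta$ a constant depending only on $S$). Likewise $h_0, h_1 \in O(\ell(p))$ and $t \in O(\ell(p)^2)$. So all decimals appearing have at most $O(\ell(p)^2)$ digits, and by the model of computation (Definition~\ref{def:bounded}) additions and sign-checks on such numbers cost $O(\ell(p)^2)$, while multiplications cost $O(\ell(p)^4)$.

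First I would dispatch the cheap stages. Stages~\ref{stp:check_height}--\ref{stp:check_unitary} involve only reading off degrees and coefficient sizes of the $f_i$, evaluating each $f_i$ at $x_i \pm 10^{-d_1}$ (a constant number of $\QQ$-arithmetic operations on $O(\ell(p)^2)$-digit numbers, since $\deg f_i \le \zeta$ and $\height(f_i) \le h_0$, costing $O(\ell(p)^4)$ per polynomial and $O(\zeta)$ polynomials in total), and a constant number of additions and comparisons per face and per vertex of $\calT$ (of which there are $O(\zeta) = O(1)$). Each such stage costs $O(\ell(p)^4)$ or less, so their total contribution is $O(\ell(p)^4)$.

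Next, Stage~\ref{stp:compute_image}: by Remark~\ref{rem:compute_image}, finding the correct $B_i$ and computing $A_i \cdot (x_1\;\cdots\;x_\zeta)^T$ takes $O(d_1 \ell(p) + \ell(p)^2)$ operations — but each such operation is on numbers with $O(d_1 + \ell(p)) = O(\ell(p)^2)$ digits, so multiplications cost $O(\ell(p)^4)$ apiece; since there are $O(d_1\ell(p) + \ell(p)^2) = O(\ell(p)^3)$ of them this naively gives $O(\ell(p)^7)$, so here I would be more careful: the $A_i$ are $\ell(p)$-bounded matrices of constant size, so forming $A_i \cdot v$ is $O(\zeta^2) = O(1)$ multiplications of an $\ell(p)$-digit integer by an $O(\ell(p)^2)$-digit decimal, i.e. $O(\ell(p)\cdot\ell(p)^2) = O(\ell(p)^3)$ work total for the matrix product, and locating $B_i$ costs $O(d_1\ell(p) + \ell(p)^2) = O(\ell(p)^3)$ comparisons each on $O(\ell(p)^2)$-digit numbers, hence $O(\ell(p)^5)$ — I will need to re-examine the bookkeeping in Remark~\ref{rem:compute_image} to confirm the intended bound is $O(\ell(p)^4)$; plausibly the number of comparisons is actually $O(\ell(p)^2)$ once one uses that each individual flip only changes a bounded number of coordinates by a bounded number of digits. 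Stages~\ref{stp:check_invariant}--\ref{stp:check_stable} then cost a constant number of multiplications and comparisons on $O(\ell(p)^2)$-digit numbers, i.e. $O(\ell(p)^4)$.

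The main obstacle, and the dominant term, is Stage~\ref{stp:check_filling}: computing $s^t(T')$ requires performing $t = 24\zeta K\ell(p)^2 \in O(\ell(p)^2)$ maximal splittings. Each maximal splitting updates the (at most $3\zeta = O(1)$) branch weights by the rule of Figure~\ref{fig:splitting_branch}, i.e. a constant number of subtractions and absolute values, together with a constant number of comparisons to find the maximal branches; by the claim in the proof of Proposition~\ref{prop:filling_lamination} the weight $x_i^k$ after $k$ splittings has at most $O(\ell(p)^2)$ digits (indeed $\le d_1$), so each splitting costs $O(\ell(p)^2)$ operations. Hence all $t$ splittings cost $O(\ell(p)^2)\cdot O(\ell(p)^2) = O(\ell(p)^4)$, and the final filling check — examining the complementary regions of the resulting train track, whose combinatorial size is $O(\zeta) = O(1)$, for the non-disk, non-once-marked-disk condition — costs $O(1)$. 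Summing over all ten stages gives a total running time of $O(\ell(p)^4)$, which is the claimed bound; since $\ell(p) \in O(\ell(h))$ this also shows the pseudo-Anosov problem is in $\NP$. $\qed$
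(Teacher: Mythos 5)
Your proposal follows essentially the same stage-by-stage accounting as the paper: identify that every quantity carries $O(d_1) = O(\ell(p)^2)$ digits, note that the dominant costs are the polynomial evaluations in Stage~\ref{stp:check_approx}, the multiplications in Stage~\ref{stp:check_invariant}, and the $t = O(\ell(p)^2)$ maximal splittings in Stage~\ref{stp:check_filling}, each contributing $O(\ell(p)^4)$.

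The one loose end you flag, Stage~\ref{stp:compute_image}, resolves without any new idea: the count ``$O(d\,\ell(p) + \ell(p)^2)$ operations'' in Remark~\ref{rem:compute_image} is already at the bit level, not a count of arithmetic operations on big numbers. Processing the path flip by flip, each flip updates one coordinate via $\max(a+c,\,b+d)-e$, i.e.\ a constant number of additions and sign tests on numbers with at most $d_1 + \ell(p) = O(\ell(p)^2)$ digits, costing $O(\ell(p)^2)$ per flip by the stated model; over the $\ell(p)$ flips this is $O(\ell(p)^3)$, and no multiplications occur, so there is no extra factor of $\ell(p)^2$ and the $O(\ell(p)^5)$ figure in your naive accounting does not arise. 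With that correction your argument matches the paper's proof.
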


\begin{proof}
We analyse each of the stages of the main algorithm in turn. Recall that $h_0, h_1 \in O(\ell(p))$ and $t, d_1, p \in O(\ell(p)^2)$.
\begin{enumerate}
\item This can be done in $O(h_0) = O(\ell(p))$ operations.
\item This can be done in $O(p_1) = O(\ell(p)^2)$ operations.
\item By expanding $f_i(x) = \sum a_j x^j$ as $a_0 + x (\cdots (a_{\zeta-2} + x(a_{\zeta-1} + a_\zeta x)) \cdots )$ using Horner's rule \cite[Section~4.6.4]{AoCPII} it can be seen that each $f_i(x_i \pm 10^{-d_1})$ can be computed in $O(d_1(d_1 + \zeta h_0)) = O(\ell(p)^4)$. Hence this stage can be done in $O(\ell(p)^4)$ operations.
\item There are $O(1)$ inequalities to check each of which requires $O(d_1 + p_1) = O(\ell(p)^2)$ operations. Hence this can be done in $O(\ell(p)^2)$ operations.
\item By the same argument this also requires $O(\ell(p)^2)$ operations.
\item This can be done in $O(d_1 + p_1) = O(\ell(p)^2)$ operations.
\item Following the ideas of Remark~\ref{rem:compute_image}, as each $x_i$ has $d_1$ digits, we can find $A_i$ and $B_i$ and compute $y_1, \ldots, y_\zeta$ in $d_1 \ell(p) + \ell(p)^2 \in O(\ell(p)^3)$ operations. Each $y_i$ has at most $d_1 + \ell(p) = O(\ell(p)^2)$ digits. Hence, $y$ can be computed in $O(\ell(p)^2)$ further operations and has $O(\ell(p)^2)$ digits. Therefore this entire stage can be done in $O(\ell(p)^3)$ operations.
\item As $x_i$, $y_i$ and $y$ each have $O(\ell(p)^2)$ digits this stage can be done in $O(\ell(p)^4 + d_1 + p_1) = O(\ell(p)^4)$ operations.
\item As $y$ has $O(\ell(p)^2)$ digits, this can be done in $O(\ell(p)^2 + p_1) = O(\ell(p)^2)$ operations.
\item Constructing $T'$ takes at most $O(d_1) = O(\ell(p)^2)$ operations. For each maximal splitting it takes $O(p_1)$ operations to find the maximal weight branches and then $O(d_1)$ further operations to perform the splitting. Therefore we can construct $T'_t \defeq s^{t}(T')$ in $O(t (p_1 + d_1)) = O(\ell(p)^4)$ operations. Finally, checking whether $T'_t$ is filling can be done in $O(p_1) = O(\ell(p)^2)$ operations. Hence this whole stage can be done in $O(\ell(p)^4)$ time.
\end{enumerate}
Therefore the main algorithm will terminate in $O(\ell(p)^4)$ time.
\end{proof}

Again, we may choose $p$ such that $\ell(p) \in O(\ell(h))$. Hence we deduce that:

\begin{corollary}
\label{cor:pA_NP}
Fix $S$, a marked surface, and $X$, a finite generating set of $\Mod(S)$. Deciding whether a word $h \in X^*$ correspond to a pseudo-Anosov mapping classes is a problem in $\NP$. \qed
\end{corollary}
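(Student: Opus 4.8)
The plan is to assemble the pieces developed above into the standard template for membership in $\NP$: exhibit a polynomially bounded certificate together with a polynomial-time verifier.

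First I would fix the input encoding. Given a word $h \in X^*$, use the quasi-isometry between $G$ and $\Mod(S)$ (Section~\ref{sub:model_ML}) to produce, in polynomial time, a path $p$ from $\calT$ to $h(\calT)$ in $G$ with $\ell(p) \in O(\ell(h))$. All the auxiliary quantities appearing in the main algorithm are then polynomial in $\ell(h)$: concretely $h_0, h_1 \in O(\ell(p))$ while $t, p_1, d_1 \in O(\ell(p)^2)$.

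Next I would verify that the certificate itself is of polynomial size. A certificate consists of the decimals $x_1, \ldots, x_\zeta$, each truncated to $d_1 \in O(\ell(p)^2)$ places and hence $O(\ell(p)^2)$--bounded, together with polynomials $f_1, \ldots, f_\zeta \in \ZZ[x]$, each of degree at most $\zeta = O(1)$ and with coefficients whose logarithm is at most $h_0 \in O(\ell(p))$, hence $O(\ell(p))$--bounded integers. Thus the total bit-length of the certificate is $O(\ell(p)^2) = O(\poly(\ell(h)))$. The fact that makes this possible is Theorem~\ref{thrm:pA_have_certifiacte}: there the edge weights of the stable lamination of a pseudo-Anosov are shown to be algebraic of degree at most $\zeta$ and of height at most $h_0 \in O(\ell(p))$, which in turn rests on the height estimates of Lemma~\ref{lem:height_eigenvector} and Lemma~\ref{lem:height_eigenvalue} and on the polynomial bounds on periodic and preperiodic lengths from Section~\ref{sec:train_tracks}.

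Finally I would invoke the two main results. By Theorem~\ref{thrm:algorithm_correct} there is a certificate accepted by the main algorithm if and only if $h$ is pseudo-Anosov, so these certificates are exactly the required witnesses; and by Theorem~\ref{thrm:algorithm_analysis} the main algorithm decides acceptance or rejection in $O(\ell(p)^4)$ operations, each of which, under the model of Definition~\ref{def:bounded}, costs $O(\poly(\ell(h)))$ bit-operations on the $O(\ell(p)^2)$--bounded integers involved, so the verifier runs in polynomial time. Together these give a polynomially bounded, polynomial-time verifiable certificate, which is precisely membership in $\NP$. I do not expect any real obstacle at this last stage — the substance has all been established in the preceding sections — and the only point that needs a moment's care is confirming that the certificate (and not merely its verification) has polynomial size, which is exactly what the bound $h_0 \in O(\ell(p))$ guarantees.
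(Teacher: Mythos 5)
Your proposal is correct and follows the paper's own argument: the paper likewise deduces the corollary by choosing $p$ with $\ell(p) \in O(\ell(h))$ and combining Theorem~\ref{thrm:algorithm_correct} (acceptable certificates exist exactly for pseudo-Anosovs) with Theorem~\ref{thrm:algorithm_analysis} (verification in $O(\ell(p)^4)$ time). Your explicit check that the certificate itself has polynomial bit-length, via the bound $h_0 \in O(\ell(p))$ from Theorem~\ref{thrm:pA_have_certifiacte}, is a worthwhile point that the paper leaves implicit.
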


As usual, this immediately gives an exponential time algorithm to determine whether a word is pseudo-Anosov.

\section{The conjugacy problem}
\label{sec:conjugacy}

In this section we apply the results developed in Section~\ref{sec:train_tracks} to the conjugacy problem for mapping class groups. We use these to show that that the conjugacy problem is in $\coNP$ (Corollary~\ref{cor:conjugacy_coNP}). That is, if two words do not represent conjugate mapping classes then there is a short proof of this.

To do this we work with a path $p$ from $\calT$ to $h(\calT)$ and deal with three cases depending on the Nielsen--Thurston type of $h$. In each case we will describe a total conjugacy invariant and show that this can be constructed in $O(\poly(\ell(p)))$ time when given a suitable certificate. Thus, if $q$ is a second path from $\calT'$ to $g(\calT')$ then these invariants are small enough they can be compared in $O(\poly(\ell(p) + \ell(q)))$ operations. Hence we can determine whether $g$ and $h$ are conjugate in polynomial time. 

Again we may choose paths $p$ and $q$ such that $\ell(p) \in O(\ell(h))$ and $\ell(q) \in O(\ell(g))$. Hence this shows that there are certificates that allow us to determine whether $g$ and $h$ are conjugate in $O(\poly(\ell(g) + \ell(h)))$ operations.

The reducible case relies on a solution to a stronger version of the conjugacy problem for the other two cases. Specifically, a solution to the \emph{permutation conjugation problem}:

\begin{problem}[The permutation conjugacy problem]
\label{prob:conjugating_permutation}
Suppose that $g, h \in \Mod(S)$ are mapping classes. Given paths $p$ from $\calT$ to $h(\calT)$ and $q$ from $\calT$ to $g(\calT')$ and a map $\pi \from V \subseteq V(\calT) \to V(\calT')$, decide whether $g$ and $h$ are \emph{$\pi$--conjugate}. That is, decide whether there is there is a mapping class $f \in \Mod(S)$ such that $f g f^{-1} \equiv h$ and $f|_V = \pi$.
\end{problem}

\subsection{Periodic mapping classes}
\label{sub:conjugacy_periodic}

We begin with the case in which $h$ is periodic. Here we use properties from its quotient orbifold as a total conjugacy invariant. 

As $h$ is periodic, by the Nielsen realization theorem \cite[Theorem~7.2]{FM} there is a homeomorphism $\phi \in h$ such that $\ord(\phi) = \ord(h)$. We may use this to define the \emph{quotient orbifold} $\calO \defeq \quotient{S}{\phi}$ which, up to homeomorphism, is independent of the particular choice of $\phi$. There are two key properties that we will need to determine $\calO$.

Firstly, we say that the \emph{order} of a point $x \in S$ is
\[ \ord(x) \defeq \begin{cases}
\min\{k > 0 : \phi^k(x) = x\} & \textrm{if $x$ is not a marked point,} \\
-\min\{k > 0 : \phi^k(x) = x\} & \textrm{otherwise.}
\end{cases} \]
We note that all but finitely many of the points of $S$ are \emph{regular}, that is, have order $\ord(\phi)$. In fact the number of irregular points is bounded only in terms of the topology of $S$.

Secondly, even though $\phi^{|\ord(x)|}(x) = x$ this map may not act like the identity near $x$; it may rotate the tangent plane at $x$ by $2 \pi r(x)$. We refer to the rational number $r(x)$ as the \emph{rotation number} of $x$. We note that the denominator of $r(x)$ is bounded above by $\ord(\phi) \leq 8 \genus(S) + 4 \nummarkedpoints(S) - 2$ \cite[Theorem~7.5]{FM}.

Let $N(o, r)$ denote the number of irregular points of $S$ with order $o$ and rotation number $r$. Nielsen showed that $N$ determines $\calO$ and so is a total conjugacy invariant of $h$ \cite[Theorem~9]{MosherLecture}.

To compute $N$ we consider a \emph{multiarc}, that is, is the isotopy class of the image of a smooth proper embedding of a finite number of copies of $[0, 1]$ (whose endpoints are sent to marked points) into $S$. Again we represent a multiarc via its intersection numbers with the edges of a triangulation. However, as a non-trivial multiarc can have zero intersection with all edges, we must first make a slight modification to the standard definition of intersection number:

\begin{definition}
If $\alpha$ is a multiarc which contains $k$ copies of the edge $e$ of $\calT$ then their \emph{intersection number} is defined to be $\intersection(\alpha, e) \defeq -k$.
\end{definition}

Having made this change we can now identify a multiarc with its \emph{normal coordinate}, its vector of intersection numbers with the edges of $\calT$. These coordinates allow us to restate many of the results of Section~\ref{sub:model_ML} for multiarcs.

Firstly, as in Lemma~\ref{lem:is_multicurve_LP}, there are $O(1)$--bounded $\zeta \times 3\zeta$ matrices $F'_i$ such that a vector $v$ corresponds to a multiarc if and only if $v \neq 0$ and
\[ F'_i \cdot v \geq_2 0 \]
for some $i$. Secondly, analysis of the $30$ cases that can occur within a pair of triangles shows how these coordinates change under an edge flip.

\begin{lemma}
Suppose that $\alpha$ is a multiarc and $e$ is a flippable edge of a triangulation $\calT$ as shown in Figure~\ref{fig:flip} then
{
\newcommand{\waa}{\widehat{a}}
\newcommand{\wbb}{\widehat{b}}
\newcommand{\wcc}{\widehat{c}}
\newcommand{\wdd}{\widehat{d}}
\newcommand{\wee}{\widehat{e}}
\newcommand{\eee}{\bar{e}}
\[ \intersection(\alpha, e') = \begin{cases}
\waa + \wbb - \eee & \textrm{if }
\eee \geq \waa + \wbb \textrm{ and } \waa \geq \wdd \textrm{ and } \wbb \geq \wcc, \\
\wcc + \wdd - \eee & \textrm{if }
\eee \geq \wcc + \wdd \textrm{ and } \wdd \geq \waa \textrm{ and } \wcc \geq \wbb, \\

\waa + \wdd - \eee & \textrm{if }
\eee \leq 0 \textrm{ and } \waa \geq \wbb \textrm{ and } \wdd \geq \wcc, \\
\wbb + \wcc - \eee & \textrm{if }
\eee \leq 0 \textrm{ and } \wbb \geq \waa \textrm{ and } \wcc \geq \wdd, \\

\waa + \wdd - 2 \eee & \textrm{if }
\eee \geq 0 \textrm{ and } \waa \geq \wbb + \eee \textrm{ and } \wdd \geq \wcc + \eee, \\
\wbb + \wcc - 2 \eee & \textrm{if }
\eee \geq 0 \textrm{ and } \wbb \geq \waa + \eee \textrm{ and } \wcc \geq \wdd + \eee, \\

\frac{1}{2}(\waa + \wbb - \eee) & \textrm{if }
\waa + \wbb \geq \eee \textrm{ and } \wbb + \eee \geq 2 \wcc + \waa \textrm{ and } \waa + \eee \geq 2 \wdd + \wbb, \\
\frac{1}{2}(\wcc + \wdd - \eee) & \textrm{if }
\wcc + \wdd \geq \eee \textrm{ and } \wdd + \eee \geq 2 \waa + \wcc \textrm{ and } \wcc + \eee \geq 2 \wbb + \wdd, \\

\max(\waa + \wcc, \wbb + \wdd) - \eee & \textrm{otherwise}
\end{cases} \]
where $\widehat{x} \defeq \max(\intersection(\alpha, x), 0)$ and $\eee \defeq \intersection(\alpha, e)$. \qed
}
\end{lemma}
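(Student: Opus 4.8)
The plan is to reduce the statement to a finite computation inside the \emph{flip quadrilateral} $Q$, the union of the two triangles of $\calT$ meeting along $e$. In the notation of Figure~\ref{fig:flip} these triangles have edge sets $\{a, b, e\}$ and $\{c, d, e\}$, and after the flip they become $\{a, d, e'\}$ and $\{b, c, e'\}$. Since the triangulation is unchanged away from $Q$, the intersection numbers of $\alpha$ with every edge other than $e$ and $e'$ are unaffected, so it suffices to compute $\intersection(\alpha, e')$ from the five numbers $\widehat{a}, \widehat{b}, \widehat{c}, \widehat{d}, \bar{e}$. Isotope $\alpha$ into minimal position with respect to $\calT$ and simultaneously with respect to the flipped triangulation; standard normal-form arguments (see \cite{PH}) show this is possible. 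Then $\alpha \cap Q$ is a disjoint union of \emph{elementary arcs}: arcs cutting off one of the four corners of $Q$; arcs running from one triangle to the other and hence meeting $e$; arcs with an endpoint at one of the four vertices of $Q$; and, when $\intersection(\alpha, e) = -k < 0$, the $k$ parallel copies of $e$ itself. Up to isotopy there are only finitely many such types.

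Write $n_\theta \ge 0$ for the number of elementary arcs of type $\theta$. The condition that, for each edge $f \in \{a, b, c, d, e\}$, the elementary arcs meeting $f$ account for $\intersection(\alpha, f)$ (with the sign convention of the definition preceding the lemma), together with the minimality of $\alpha$, determines the $n_\theta$ uniquely, as a piecewise-linear function of $(\widehat{a}, \widehat{b}, \widehat{c}, \widehat{d}, \bar{e})$; its linearity regions are exactly the loci on which a prescribed subset of the $n_\theta$ vanish, and these are cut out by the corner inequalities and sign conditions on $\bar{e}$ that appear in the nine cases. Each elementary type either crosses the new diagonal $e'$ transversely in a fixed number of points, becomes a copy of $e'$ (contributing negatively), or misses $e'$, so $\intersection(\alpha, e')$ is a fixed integer-linear combination of the $n_\theta$; substituting the linear formulas valid on a given region yields the stated expression there. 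The last (``otherwise'') case is the purely laminar one, in which no elementary arc touches a vertex of $Q$ and $\alpha$ behaves inside $Q$ like a measured lamination, so Proposition~\ref{prop:flip_intersection_laminations} applies with $\calL(x)$ replaced by $\widehat{x}$. Moreover the flip picture is symmetric under the relabelings $a \leftrightarrow d,\, b \leftrightarrow c$ and $a \leftrightarrow b,\, c \leftrightarrow d$ (the two reflections of $Q$ across its diagonals, each of which fixes $e$ and $e'$), so the piecewise-linear function is invariant under them and it suffices to verify one case from each of the five orbits $\{1,2\}$, $\{3,4\}$, $\{5,6\}$, $\{7,8\}$, $\{9\}$.

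The bulk of the work --- and the only genuine obstacle --- is the careful enumeration of the elementary arc types, in particular those incident to the vertices of $Q$ and, when $\bar{e} \le 0$, the redistribution of the $k = -\bar{e}$ copies of $e$ into corner arcs and copies of $e'$ under the flip. Two features of the formula warrant an explicit check. The coefficient $2$ in the fifth and sixth cases occurs precisely in the regime where every transverse intersection with $e$ comes from an arc running from side $a$ to side $d$ (respectively $b$ to $c$): after the flip such an arc lies entirely inside the new triangle $\{a, d, e'\}$ (respectively $\{b, c, e'\}$) and so meets $e'$ nowhere, while its two ends each delete one unit from the tally on $a$ and on $d$, giving $\intersection(\alpha, e') = (\widehat{a} - \bar{e}) + (\widehat{d} - \bar{e})$. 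The half-integers in the seventh and eighth cases come out of solving the linear system (a division by $2$ forced by an elementary type meeting $e$ twice, once in each triangle), and one checks from the defining inequalities of those regions that the quantity being halved is even. Carrying out the substitution for these representatives and applying the reflections of $Q$ then yields all nine cases.
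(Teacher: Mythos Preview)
Your sketch is correct and follows exactly the approach the paper has in mind: the paper's entire proof is the one-line remark that ``analysis of the $30$ cases that can occur within a pair of triangles'' yields the formula, followed by a \qedsymbol. Your outline of the elementary-arc enumeration in the flip quadrilateral, the use of the two reflection symmetries to cut the casework down to five orbits, and your spot-checks of the coefficient $2$ and the half-integer cases are all sound and in fact more detailed than what the paper records.
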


Now suppose that $\alpha$ is a multiarc in $\calO$ which \emph{fills}, that is, such that each component of $\calO - \alpha$ is a disk containing at most one irregular point in its interior. For example, see Figure~\ref{fig:orbifold_arc}. By lifting $\alpha$ back to $S$ we see that there is an $h$--invariant filling multiarc $\beta$ on $S$. Now $\beta$ decomposes $S$ into polygonal pieces, each of which contains at most one irregular point. For example, see Figure~\ref{fig:lift_of_arc}. These polygons are permuted by the action of $h$ and it is from this permutation that we can immediately determine the orders and rotation numbers of the irregular points of $S$ and so compute $N$.

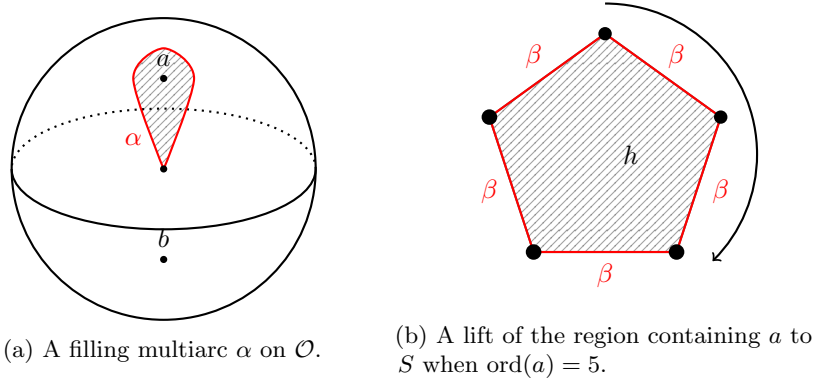
\begin{figure}[ht]
    \centering
    \begin{subfigure}[c]{0.45\textwidth}
      \centering
      \begin{tikzpicture}[thick, scale=0.8]

\tikzset{dot/.style={draw,shape=circle,fill=black,scale=0.2}}

\draw (0,0) circle (2.5);
\draw (-2.5,0) arc (180:360:2.5 and 1);
\draw [dotted] (2.5,0) arc (0:180:2.5 and 1);

\fill [pattern=north east lines,opacity=0.6] plot [smooth] coordinates {(0,0) (-0.5,1.5) (0,2) (0.5,1.5) (0,0)};
\draw [red] plot [smooth] coordinates {(0,0) (-0.5,1.5) (0,2) (0.5,1.5) (0,0)};
\node [red] at (-0.5,0.5) {$\alpha$};

\node [dot] at (0,1.5) [label={\contour*{white}{$a$}}] {};
\node [dot] at (0,-1.5) [label={$b$}] {};
\node [dot] at (0,0) {};

\end{tikzpicture}
      \caption{A filling multiarc $\alpha$ on $\calO$.}
      \label{fig:orbifold_arc}
    \end{subfigure}
    ~ 
    \begin{subfigure}[c]{0.45\textwidth}
      \centering
      \begin{tikzpicture}[thick,scale=0.8]

\tikzset{dot/.style={draw,shape=circle,fill=black,scale=0.2}}

\newdimen\R
\R=2cm

\draw [pattern=north east lines,opacity=0.6] (18:\R) \foreach \x in {90,162,...,359} {-- (\x:\R)} -- cycle;
\draw [red] (18:\R) \foreach \x in {90,162,...,359} {-- (\x:\R)} -- cycle;
\foreach \i in {18,90,...,359} \node [dot] at (\i: \R) {$\i$};
\foreach \i in {18,90,...,359} \node at (\i+36: \R) [red] {$\beta$};

\draw [->] (0,2.5) arc (90:-45:2.5cm) node [midway, label=right:{$h$}] {};

\draw [opacity=0] (0,0) circle (2.5);

\end{tikzpicture}
      \caption{A lift of the region containing $a$ to $S$ when $\ord(a)=5$.}
      \label{fig:lift_of_arc}
    \end{subfigure}
    \caption{An invariant multiarc $\beta$ obtained by lifting a multiarc $\alpha$.}
    \label{fig:lift_arc}
\end{figure}

Thus, given a path $p$ from $\calT$ to $h(\calT)$ and vector $v \defeq \calT(\beta)$ we:
\begin{enumerate}
\item Check that $v$ corresponds to a multiarc $\beta$ by checking that $F'_i \cdot v \geq_2 0$ for some $i$.
\item Check that $\beta$ is $h$--invariant by computing and checking that $\calT(h(\beta)) = v$.
\item Compute the normal coordinate of each component $\beta_i$ of $\beta$ and check that each occurs with multiplicity one.
\item Compute the components $\beta_i$ that are adjacent to each region of $S - \beta$, together with their cyclic ordering.
\item Compute the permutation of $\beta_i$ induced by $h$ and so compute $N$.
\end{enumerate}

To analyse the number of operations required by this procedure, suppose that the given vector $v$ is $k$--bounded. Then checking that $F'_i \cdot v \geq_2 0$ for some $i$ can be done in $O(\poly(k))$ operations as the $F'_i$ matrices are $O(1)$--bounded. By repeating the idea of Remark~\ref{rem:compute_image}, we can compute $\calT(h(\beta))$ and so complete Stage~2 in $O(\poly(k) \poly(\ell(p)))$ operations. 

To complete Stage~3 and Stage~4 we can use the algorithm of Agol, Hass and Thurston \cite{AHT} or one of its variants \cite[Section~6]{EricksonNayyeri} \cite[Theorem~1]{SchaeferSedgwick}. These allow us to extract the edge vectors of the components of $\beta$ in $O(\poly(k))$ operations. Finally by computing $h(\beta_i)$ using the idea of Remark~\ref{rem:compute_image}, this permutation can be computed in $O(\poly(k) \poly(\ell(p)))$ operations and from this $N$ can be computed in $O(\poly(k) \poly(\ell(p)))$ operations too.

Now if $p$ is a path from $\calT$ to $h(\calT)$ we can once again compute matrices $A'_i$ and $B'_i$ which describe the piecewise linear transformation between coordinates with respect to $\calT$ and $h(\calT)$. Again, these matrices can be chosen such that:
\begin{itemize}
\item each $A'_i$ and $B'_i$ is $\ell(p)$--bounded,
\item each $B'_i$ has $O(\ell(p))$ rows,
\item for each multiarc $\alpha$ we have that $B'_i \cdot \calT(\alpha) \geq 0$ for some $i$, and
\item for each multicurve $\alpha$ we have that
\[ \calT(h(\alpha)) = A'_i \cdot \calT(\alpha) \quad \textrm{if and only if} \quad B'_i \cdot \calT(\alpha) \geq 0. \]
\end{itemize}
We can use $A'_i$, $B'_i$ and $F'_i$ to repeat the argument of \cite[Theorem~3.3]{BellReducibility}. From this we deduce that there is a filling multiarc $\beta$ on $S$ such that $h(\beta) = \beta$ and $\calT(\beta)$ is $O(\ell(p))$--bounded. Hence there is a $v$ such that we can compute $N$ in $O(\poly(\ell(p)))$ operations.

\begin{corollary}
\label{cor:p_conjugate}
Deciding whether two periodic words $g, h \in X^*$ correspond to conjugate mapping classes is a problem in $\coNP$. \qed
\end{corollary}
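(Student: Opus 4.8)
The plan is to turn the total conjugacy invariant $N$ assembled above into a certificate for \emph{non}-conjugacy, thereby placing the problem in $\coNP$.

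First I would record the three ingredients already established in the discussion preceding the statement. By Nielsen's theorem the function $N = N(o,r)$ is a \emph{total} conjugacy invariant of a periodic mapping class, so two periodic $g, h \in \Mod(S)$ are conjugate if and only if their invariants $N_h$ and $N_g$ agree. Next, given a path $p$ from $\calT$ to $h(\calT)$ and a $k$--bounded vector $v$, one can check in $O(\poly(k)\poly(\ell(p)))$ operations whether $v$ is the normal coordinate of an $h$--invariant filling multiarc $\beta$ and, when it is, read off $N_h$ in the same time bound: the flip transition rules for multiarcs let us compute $\calT(h(\beta))$ and hence test invariance, and the Agol--Hass--Thurston algorithm (or one of its variants) lets us split $\beta$ into its components, record their cyclic orderings around the complementary polygons, and extract the permutation that $h$ induces on those polygons, from which the orders and rotation numbers and so $N_h$ follow. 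Finally, running the argument of \cite[Theorem~3.3]{BellReducibility} with the multiarc matrices $A'_i, B'_i, F'_i$ in place of their multicurve counterparts shows that such a $\beta$ can always be chosen with $\calT(\beta)$ being $O(\ell(p))$--bounded.

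With these in hand the $\coNP$ algorithm is as follows. Given periodic words $g$ and $h$, I would take a certificate of non-conjugacy consisting of a path $p$ from $\calT$ to $h(\calT)$ with $\ell(p) \in O(\ell(h))$, a path $q$ from $\calT'$ to $g(\calT')$ with $\ell(q) \in O(\ell(g))$, and two vectors $v$ and $w$. The verifier first checks that $v$ (respectively $w$) is the normal coordinate of an $h$--invariant (respectively $g$--invariant) filling multiarc, rejecting if not; it then computes $N_h$ from $(p,v)$ and $N_g$ from $(q,w)$ and accepts exactly when $N_h \neq N_g$. Correctness is immediate from $N$ being a total invariant: if $g$ and $h$ are not conjugate then $N_h \neq N_g$ for every valid pair $v, w$ and a polynomially bounded valid pair exists, so some certificate is accepted; if $g$ and $h$ are conjugate then any certificate with valid $v, w$ fails the final comparison and any certificate with an invalid $v$ or $w$ fails the validity check, so no certificate is accepted. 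The size bounds make every step polynomial in $\ell(p) + \ell(q) \in O(\ell(g) + \ell(h))$, so deciding non-conjugacy of periodic words lies in $\NP$, equivalently deciding conjugacy of periodic words lies in $\coNP$.

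I expect the only genuine obstacle to be the existence of a polynomially bounded invariant filling multiarc; everything else — the enumeration of the thirty triangle configurations giving the multiarc flip rules, the polynomial-time extraction and manipulation of component coordinates, and the bookkeeping translating the induced permutation into orders and rotation numbers — is routine once that ingredient is in place.
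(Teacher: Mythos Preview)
Your proposal is correct and follows essentially the same route as the paper: the corollary is stated there with an immediate \qed{} because it is a direct consequence of the preceding discussion, which establishes exactly the three ingredients you isolate --- $N$ is a total invariant, $N$ can be computed in $O(\poly(\ell(p)))$ operations from an $O(\ell(p))$--bounded vector $v$, and such a $v$ exists. Your packaging of this into an explicit $\coNP$ verifier (certificate $(v,w)$, validate, compute $N_h$ and $N_g$, compare) is precisely what the paper leaves implicit.
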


To deal with the permutation conjugacy problem we use the following lemma.

\begin{lemma}
Periodic mapping classes $h$ and $g$ are $\pi$--conjugate if and only if:
\begin{itemize}
\item $h$ and $g$ are conjugate, and
\item for each marked point $v \in V(S)$ we have that:
\begin{itemize}
\item $v$ and $\pi(v)$ have the same rotation number (with respect to $h$ and $g$ respectively), and
\item $\pi(h^k(v)) = g^k(\pi(v))$ for every $k \in \ZZ$
\end{itemize}
whenever these maps are defined.
\end{itemize}
\end{lemma}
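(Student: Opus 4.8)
The forward implication should be routine. A $\pi$--conjugator $f$ certainly witnesses that $h$ and $g$ are conjugate. Picking finite--order representatives $\phi\in h$ and $\psi\in g$ by Nielsen realisation, and adjusting a homeomorphism representing $f$ within its isotopy class (which does not alter the induced action on marked points, by uniqueness of Nielsen realisation), we may assume it conjugates $\psi$ to $\phi$ on the nose. Conjugation by an orientation--preserving homeomorphism preserves the order and rotation number of every periodic point and carries $\langle\psi\rangle$--orbits to $\langle\phi\rangle$--orbits; reading off what this says on $V$ and using $f|_V=\pi$ yields the two compatibility conditions, after the index bookkeeping forced by the convention $fgf^{-1}\equiv h$.

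For the reverse implication I would argue as follows. Fix finite--order homeomorphisms $\phi\in h$ and $\psi\in g$, and, since $h$ and $g$ are conjugate, a homeomorphism $F_0$ with $F_0\psi F_0^{-1}=\phi$, obtained by adjusting a conjugator within its isotopy class; this leaves $[F_0]$ restricted to the marked points unchanged. Every conjugator of $g$ to $h$ has the form $F_0\theta$ with $\theta$ in the centraliser $C(\psi)=\{\theta:\theta\psi=\psi\theta\}$, so it suffices to produce $\theta\in C(\psi)$ with $\theta|_V=\pi'$, where $\pi':=F_0^{-1}\circ\pi$. (One may assume $\pi$ is injective, which is necessary for any conjugator to restrict to it.)

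The plan is to build such a $\theta$ from the quotient orbifold $\calO_g=\quotient{S}{\psi}$. Every element of $C(\psi)$ descends to a homeomorphism of $\calO_g$, and conversely a homeomorphism of $\calO_g$ that is compatible with the cover lifts to $C(\psi)$, the lift being unique up to composition with a power of $\psi$. Using the rotation--number hypothesis — which, since $h$ and $g$ have equal order, also forces the $\psi$--orbits of $v$ and of $\pi(v)$ to have equal size — together with the orbit--compatibility hypothesis, one checks that $\pi'$ is $\psi$--equivariant on its domain and type--preserving, hence descends to a partial bijection $\bar\pi'$ between marked cone points of $\calO_g$ matching orders and rotation numbers. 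By the change--of--coordinates principle, applied with cone points grouped by this data, $\bar\pi'$ extends to a homeomorphism $\bar\theta$ of $\calO_g$ respecting the orbifold structure, which therefore lifts to some $\theta_0\in C(\psi)$; this $\theta_0$ agrees with $\pi'$ up to a rotational offset along each constrained $\psi$--orbit, and the last step is to absorb those offsets by composing $\bar\theta$ with suitable point--pushes of the cone points, whose lifts rotate the relevant orbits by the prescribed deck transformations.

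I expect this last step to be the crux: the offsets at different cone points are coupled through the connectivity of $S$ and the single global ambiguity in the lift, so killing them all simultaneously is not automatic. The point is that the attainable offset data forms a coset whose allowed shifts are exactly the holonomies realised by point--pushes around the cone points, and the rotation--number and orbit hypotheses are precisely what places the zero offset inside this coset (equivalently, what makes $\bar\pi'$ realisable by a cover--compatible homeomorphism with the trivial framing at each constrained cone point). Making this precise is, in effect, a refinement of Nielsen's classification of periodic maps that keeps track of marked points, and that is where the real work lies.
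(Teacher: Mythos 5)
Your strategy is the same as the paper's: pass to the quotient orbifold, use the lifting criterion for the cyclic cover $S \to S/\psi$, adjust the induced homeomorphism of the quotient by homeomorphisms preserving the covering subgroup, and lift back to modify the conjugator. The forward direction and the reduction to finding an element of the centraliser restricting to $\pi' = F_0^{-1}\circ\pi$ are fine (modulo the standing fact, which deserves a citation, that the centraliser of a periodic mapping class is realised by homeomorphisms commuting with a Nielsen representative).

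The genuine gap is that you stop exactly at the step that constitutes the proof. Having produced $\theta_0$ agreeing with $\pi'$ only up to a power of $\psi$ on each orbit, you assert that ``the rotation--number and orbit hypotheses are precisely what places the zero offset inside this coset'' and then declare that making this precise ``is where the real work lies.'' A proof must do that work. In fact the coupling you worry about is not an obstruction and the argument closes without a refinement of Nielsen's classification: the holonomy $\pi_1^{orb}(S/\psi)\to\ZZ/\ord(\psi)$ of the cyclic cover is surjective, so for each marked point of the quotient there is a loop whose point--push lifts to a homeomorphism commuting with $\psi$ that rotates the fibre over that point by any prescribed deck transformation while fixing every other fibre pointwise; composing with such point--pushes kills the offsets one orbit at a time. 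You should also verify that these point--pushes (and the transpositions of quotient marked points with equal rotation number that realise $\bar\pi'$ in the first place) preserve the subgroup $\pi_1(S)\leq\pi_1^{orb}(S/\psi)$, which is where the equal--rotation--number hypothesis is actually used. The paper's own proof proceeds by exactly such modifications of the induced map $S/h\to S/g$, checking the lifting criterion for each swap; without carrying out the corresponding verification, your write-up is a plan rather than a proof.
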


\begin{proof}
The forward direction of this lemma holds trivially. For the reverse direction, we first note that without loss of generality we may assume that $\pi$ is actually a permutation of the vertices of $S$. If it is not then we consider each of the possible extensions of $\pi$ in turn.

As $h$ and $g$ are conjugate there is a mapping class $f$ such that $h = f^{-1} g f$. If $f|_V \neq \pi$ then consider $\varphi \from S / h \to S / g$, the homeomorphism induced by $f$. This homeomorphism respects the orders and rotation numbers of points and satisfies the \emph{lifting criterion}: it maps the subgroup $\pi_1(S) \leq \pi_1(S / h)$ to the subgroup $\pi_1(S) \leq \pi_1(S / g)$. We can modify $\varphi$ by precomposing it with another homeomorphism $\psi \from S/h \to S/h$. If we take $\psi$ to be a homeomorphism that swaps two of the marked points whose lifts have the same rotation number then $\psi$ preserves the subgroup $\pi_1(S) \leq \pi_1(S / h)$. Hence $\varphi \circ \psi$ also satisfies the lifting criterion and so lifts to an alternate mapping class $f'$ which, like $f$, conjugates $g$ to $h$ but whose action on $V$ is permuted by $\psi$.

Therefore, as $\pi$ sends marked points to marked points with the same rotation number and $\pi(h^k(v)) = g^k(\pi(v))$ for every $k \in \ZZ$, modifications of this form are sufficient to adjust $f$ such that $f|_V = \pi$. Hence $h$ and $g$ are $\pi$--conjugate.
\end{proof}

The rotation numbers of the marked points can be determined in polynomial time from the polygonal decomposition of $S$ given by the multiarc $\beta$. Thus this additional criterion can also be tested in polynomial time.

\subsection{Pseudo-Anosov mapping classes}
\label{sub:aperiodic_irreducible}

Agol showed that for pseudo-Anosov mapping classes the combinatorics of the periodic part of their maximal splitting sequence is a total conjugacy invariant \cite[Section~7]{A}. Thus, as we can construct these in polynomial time when given an acceptable certificate for $p$, we immediately obtain that:

\begin{corollary}
\label{cor:ap_ir_conjugate}
Deciding whether two aperiodic irreducible words $g, h \in X^*$ correspond to conjugate mapping classes is a problem in $\coNP$. \qed
\end{corollary}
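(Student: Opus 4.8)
The plan is to reduce the statement to two ingredients: (i) for a pseudo-Anosov mapping class the combinatorics of the periodic part of the maximal splitting sequence of its invariant train track is a total conjugacy invariant \cite[Section~7]{A}, and (ii) this periodic part can be built, and two such periodic parts compared, in time polynomial in $\ell(p)$ (respectively $\ell(p) + \ell(q)$) once an acceptable certificate of the sort constructed in Section~\ref{sec:pA} is supplied. Granting these, the $\coNP$ verifier is immediate: given aperiodic irreducible words $g, h$ together with paths $p$ from $\calT$ to $h(\calT)$ and $q$ from $\calT'$ to $g(\calT')$, the disproof of conjugacy is a pair of certificates, one for $h$ along $p$ and one for $g$ along $q$. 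The verifier first runs the main algorithm of Section~\ref{sec:pA} on both (Theorems~\ref{thrm:algorithm_correct} and~\ref{thrm:algorithm_analysis}) to confirm they are valid; it then uses them to construct the two periodic parts and accepts precisely when these are not combinatorially equivalent.

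First I would extract the invariant data. An accepted certificate pins down algebraic numbers $v_1, \dots, v_\zeta$ which are the $\calT$--coordinates of the unit-normalised stable lamination $\calL = \calL^+(h)$, so the train track $T$ carrying $\calL$ can be read off directly from Figure~\ref{fig:tri_to_train_track}. By Corollary~\ref{cor:filling_lamination} (via Theorem~\ref{thrm:preperiodic_bound}) the measured train track $\mathbf{T}_t \defeq s^t(T)$ with $t = 24 \zeta K \ell(p)^2$ lies on the axis $A(\calL)$, and by Theorem~\ref{thrm:periodic_bound} the period $m$ of that axis is at most $d(T, \widehat{h}(T)) \leq 2 \ell(p)$. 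So I would compute $\widehat{h}(\mathbf{T}_t)$ --- obtained by transporting the train track along $p$ and dividing the measure by the dilatation $\lambda = y$ computed in Stage~\ref{stp:compute_image}, which also lies on the axis, at position $t + m$ --- and then find $m$ as the smallest $i > 0$ with $s^i(\mathbf{T}_t) = \widehat{h}(\mathbf{T}_t)$. The cyclic sequence $\mathbf{T}_t, \mathbf{T}_{t+1}, \dots, \mathbf{T}_{t+m-1}$, decorated with the maximal-splitting move joining consecutive terms, is the periodic part. Each individual splitting and each comparison is polynomial in $\ell(p)$ by the very analysis of Stage~\ref{stp:check_filling} in the proof of Theorem~\ref{thrm:algorithm_analysis}, so the periodic part --- and hence a combinatorial encoding of it --- has size and construction cost $O(\poly(\ell(p)))$.

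Then I would compare: two such cyclic sequences of combinatorial train-tracks-with-splitting-moves determine conjugate pseudo-Anosov classes if and only if they agree after a cyclic rotation and a relabelling of branches and switches, which is exactly the invariance and completeness statement of \cite[Section~7]{A}; since both sequences have length $O(\ell(p))$ and $O(\ell(q))$ and each train track has at most $3\zeta$ branches, all rotations and relabellings can be enumerated and tested in $O(\poly(\ell(p) + \ell(q)))$ time. For soundness, a pair of certificates the verifier accepts genuinely certifies, via Theorem~\ref{thrm:algorithm_correct}, that $g$ and $h$ are pseudo-Anosov with the named invariant laminations, hence genuinely produces their total invariants, hence genuinely witnesses non-conjugacy; for completeness, if $g$ and $h$ are not conjugate then, being pseudo-Anosov, each admits an acceptable certificate (Theorem~\ref{thrm:pA_have_certifiacte}), and the resulting periodic parts necessarily differ. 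Since $\ell(p) \in O(\ell(h))$ and $\ell(q) \in O(\ell(g))$, this gives a $\coNP$ algorithm (indeed an $\NP \cap \coNP$ one on this promise class).

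The hard part will be ingredient (i)--(ii), and specifically making Agol's ``combinatorics of the periodic part'' fully explicit: one must fix a finite combinatorial encoding of a measured train track on the axis --- the underlying trivalent graph with tangency data, together with enough information about the ordering of the branch measures to recover the maximal-splitting move --- record along the periodic cycle which branches are split at each step, and pin down exactly the equivalence relation (cyclic shift composed with the graph automorphism induced by the carried homeomorphism) under which this cyclic datum becomes a total conjugacy invariant. Once that bookkeeping is in place the polynomiality is routine: the quantitative bounds of Section~\ref{sec:train_tracks} (preperiodic length $O(\ell(p)^2)$, period $O(\ell(p))$) together with the per-splitting numeric control already established for Stage~\ref{stp:check_filling} supply everything, and no genuinely new estimate is required.
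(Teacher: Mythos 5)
Your proposal is correct and follows essentially the same route as the paper: the paper's argument for this corollary is exactly the two-line observation that Agol's periodic splitting-sequence combinatorics is a total conjugacy invariant and that it can be constructed in polynomial time from an acceptable certificate, using the bounds $t = 24\zeta K \ell(p)^2$ on the preperiodic length (Theorem~\ref{thrm:preperiodic_bound}) and $m \leq 2\ell(p)$ on the period (Theorem~\ref{thrm:periodic_bound}). You have simply spelled out the construction and comparison steps that the paper leaves implicit, including the bookkeeping caveat about encoding the combinatorics, which the paper also defers to \cite[Section~7]{A}.
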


We also note that in this case $g$ and $h$ are $\pi$--conjugate if and only if they are conjugate and there is a map between the combinatorics of the periodic part of their maximal splitting sequence which induces $\pi$ on $V$. Again, given acceptable certificates for $p$ and $q$, this can be done in $O(\poly(\ell(p) + \ell(q)))$ operations.

\subsection{Reducible mapping classes}
\label{sub:aperiodic_reducible}

To deal with the final case in which $h$ is reducible (and aperiodic) we use its partition graph as a total conjugacy invariant. This involves decomposing the given mapping class along its canonical curve system and considering the maps induced on the invariant subsurfaces.

\subsubsection{Crushing}
\label{subsub:crushing}

In order to be able to construct the map induced on invariant subsurfaces computationally, we first recall the notion of \emph{crushing} a surface along a curve from \cite[Section~4]{BellReducibility}.

\begin{definition}
We \emph{crush} $S$ along $\gamma$ to obtain the (possibly disconnected) surface $S_\gamma$ by removing an open regular neighbourhood of $\gamma$ and then collapsing the new boundary components to additional marked points. Note that in order for this surface to be triangulable we must also discard any components that are twice marked spheres. See Figure~\ref{fig:crush} for example.
\end{definition}

\begin{figure}[ht]
\centering
\begin{tikzpicture}[scale=2,thick]
\node (a) at (-0.25, 0) [anchor=east] {\includegraphics[width=0.4\linewidth, angle=0]{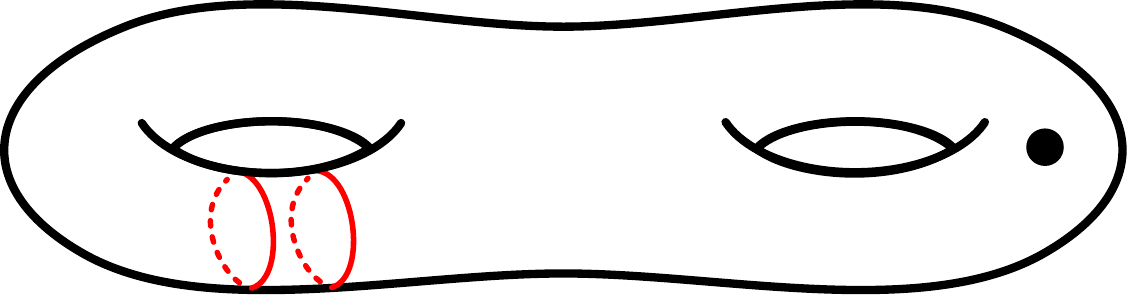}};
\node (b) at ( 0.25, 0) [anchor=west] {\includegraphics[width=0.4\linewidth, angle=0]{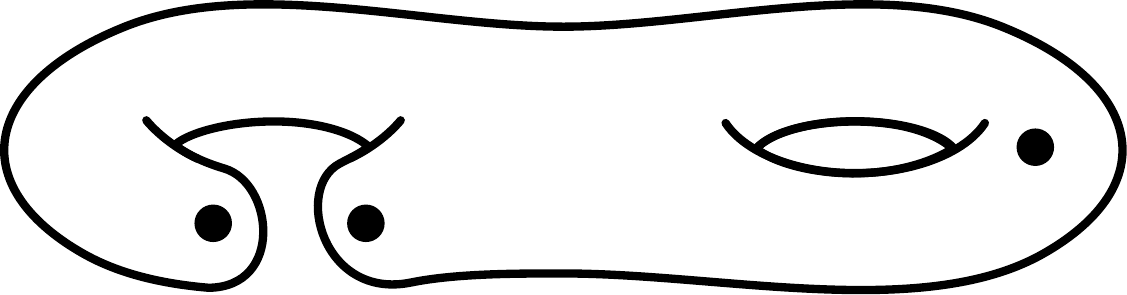}};
\draw [thick,->] ($(a.east)!0.1!(b.west)$) -- node[above] {Crush} ($(a.east)!0.9!(b.west)$);
\end{tikzpicture}
\caption{Crushing along a multicurve.}
\label{fig:crush}
\end{figure}

Now if $\calT$ is a triangulation of $S$ then we may track it as we crush $S$ along a multicurve $\gamma \in \calC(S)$. After collapsing any bigons that are created, this results in a triangulation $\calT_\gamma$ of $S_\gamma$ which also has $\zeta$ edges. Erickson and Nayyeri showed how to construct $\calT_\gamma$ in polynomial time:

\begin{theorem}[{\cite{EricksonNayyeri}}]
\label{thrm:crush_complexity}
There is an algorithm to compute $\calT_\gamma$ which completes in $O(\poly(k))$ operations when $\calT(\gamma)$ is $k$--bounded. \qed
\end{theorem}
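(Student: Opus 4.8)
The plan is to work throughout with a \emph{compressed} (block) description of $\gamma$ and of every surface derived from it, so that the algorithm only ever stores and manipulates $O(\zeta)$ cells carrying $k$--bounded integer labels; since $\zeta$ depends only on the fixed surface $S$, a bounded number of arithmetic and combinatorial operations on $k$--bounded data costs $O(\poly(k))$ in our model of computation. This is, in effect, a reconstruction of the Erickson--Nayyeri tracing algorithm specialised to crushing.

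First I would read off the local picture of $\gamma$. In a triangle of $\calT$ with edges $a, b, c$ of $\calT$--weights $w_a, w_b, w_c$, the normal arcs of $\gamma$ form three parallel blocks, one cutting off each corner, of multiplicities $n_a = (w_b + w_c - w_a)/2$, $n_b = (w_a + w_c - w_b)/2$ and $n_c = (w_a + w_b - w_c)/2$; these are $k$--bounded non-negative integers because $\gamma$ is a genuine multicurve (cf.\ Lemma~\ref{lem:is_multicurve_LP}), computed in $O(\poly(k))$ operations. Cutting the triangle along these arcs yields, per corner with positive count, a block of parallel strip cells and one innermost corner triangle, together with a single central region (a hexagon, pentagon, quadrilateral, or the whole triangle, according as zero, one, two, or three of the counts vanish). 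Recording each parallel block as a \emph{single} labelled cell and gluing these pieces along the edges of $\calT$ by the standard correspondence --- which again only compares $k$--bounded numbers --- produces a compressed cell structure for $S \setminus \gamma$ with $O(\zeta)$ cells and $k$--bounded labels, in which parallel families of complementary pieces (in particular the annuli lying between parallel copies of a component of $\gamma$) are likewise recorded with multiplicities rather than enumerated.

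Next I would perform the crushing directly on this compressed model: collapse each boundary circle of $S \setminus \gamma$ to a fresh marked point, recognise the components of the result and discard any that is a twice-marked sphere (these are exactly the annuli between parallel copies, plus any other trivial pieces, and so are detected from the Euler characteristic and marked-point count that I maintain additively during the gluing), and collapse the bigons that have appeared. By \cite[Section~4]{BellReducibility} the outcome is a triangulation of $S_\gamma$ with exactly $\zeta$ edges, so the surviving cell data is $O(\zeta)$ and only $O(1)$ bigon collapses are needed to read off $\calT_\gamma$. Any auxiliary per-component information required along the way --- for instance the edge vectors of the normal arcs forming a given boundary circle --- is extracted using the Agol--Hass--Thurston algorithm or one of its refinements \cite{AHT,EricksonNayyeri,SchaeferSedgwick} in $O(\poly(k))$ operations, exactly as in Remark~\ref{rem:compute_image}.

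I expect the main obstacle to be carrying out these boundary collapses and simplifications \emph{in bulk}. A priori $\gamma$ may have a $k$--bounded (hence possibly exponential) number of parallel components, and a single complicated component may have $k$--bounded weight, so one cannot collapse boundary circles or simplify the curve one strand or one flip at a time; instead one must process whole parallel blocks at once and argue that the block structure --- and hence the $O(\poly(k))$ bound on the labels --- survives each quotient, even when a boundary circle threads through many stacked strips. This bookkeeping is precisely what the Erickson--Nayyeri compressed-tracing machinery is built to supply; granted it, the number of operations is $O(1)$ in the fixed data of $S$ and each acts on $k$--bounded numbers, giving the claimed $O(\poly(k))$ running time.
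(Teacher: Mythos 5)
The paper offers no proof of this statement at all: it is stated as an attribution to Erickson--Nayyeri and closed with \qed, so there is nothing internal to compare your argument against. Your sketch is a faithful outline of the cited normal-coordinate machinery --- the corner counts $(w_b+w_c-w_a)/2$, the compressed block representation with $O(\zeta)$ cells carrying $k$--bounded labels, extraction of boundary circles via Agol--Hass--Thurston, and the observation that discarded components are exactly the twice-marked spheres from the crushing definition in Section~\ref{subsub:crushing} --- and it correctly identifies where the real work lies. The one thing to flag is that your final paragraph defers precisely that hard step (that bulk boundary collapses and bigon eliminations preserve the block structure and the polynomial bound on labels, even when a boundary circle threads through many stacked strips) back to the very machinery the theorem cites; as a self-contained proof this would be circular, but as a reconstruction of an attributed result it is exactly the right level of detail, and arguably more informative than what the paper provides. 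One small caution: your claim that ``only $O(1)$ bigon collapses are needed'' is true only in the compressed sense (each collapse may represent exponentially many parallel bigons), which is consistent with your own framing but worth stating explicitly so the count is not misread as a bound on individual combinatorial moves.
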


\begin{proposition}
\label{prop:crush_paths}
If $p$ is a path from $\calT$ to $\calT'$ then crushing each triangulation of $p$ along $\gamma$, and possibly discarding any repeated triangulations, gives a path $p_\gamma$ from $\calT_\gamma$ to $\calT'_\gamma$.
\end{proposition}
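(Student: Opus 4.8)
The plan is to reduce to a single elementary move and then carry out a local analysis inside the square where the flip takes place. By subdividing $p$ we may assume that $\calT'$ is obtained from $\calT$ by a single flip or a single reordering of the edges, and then apply this repeatedly along $p$. The reordering case is immediate: crushing is natural with respect to relabelling the edges, so $\calT'_\gamma$ is the corresponding reordering of $\calT_\gamma$ (or equal to it, if the edges permuted were among those destroyed by the crush).

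For the flip case, write $Q$ for the square formed by the two triangles of $\calT$ meeting along the flipped edge $e$, so that $\calT$ and $\calT'$ agree on $S \setminus Q$. Fix a representative of $\gamma$ in minimal position with respect to $\calT$; after an isotopy supported in a neighbourhood of $Q$ we obtain a representative in minimal position with respect to $\calT'$ that agrees with the first outside $Q$. Since the crushing construction --- remove an open regular neighbourhood of $\gamma$, collapse the new boundary circles to marked points, collapse any bigons this creates, and discard twice-marked spheres --- only alters the triangulation near $\gamma$, the triangulations $\calT_\gamma$ and $\calT'_\gamma$ agree away from the image of $Q$.

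It therefore remains to compare the two crushed pictures inside the image of $Q$. The arcs of $\gamma \cap Q$ fall into finitely many normal isotopy types --- each component of $\gamma$ meeting $Q$ either crosses $e$ or is a corner arc of one of the two triangles --- so there are only finitely many configurations. For each one I would perform the crush explicitly on both $\calT$ and $\calT'$ and check that the outcomes are related by a flip, by a reordering, or are equal; in particular, when $Q$ lies in a component that the crush discards we simply get $\calT_\gamma = \calT'_\gamma$ and drop the repeat. Running over the cases then shows that deleting consecutive repeats from $\calT_\gamma = (\calT_0)_\gamma, (\calT_1)_\gamma, \ldots, (\calT_k)_\gamma = \calT'_\gamma$ yields a path $p_\gamma$ in the graph of triangulations of $S_\gamma$, as required.

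The main obstacle is this last step: enumerating the local configurations of $\gamma$ relative to the square and verifying, for each, that flipping $e$ upstairs induces a flip, a reordering, or nothing downstairs, while keeping careful track of the bigon collapses and of the degenerate situations in which a piece of $Q$ becomes a twice-marked sphere and is discarded. The analysis is finite and elementary, but it is where the content of the proposition lies; everything else is bookkeeping about the locality of both the flip and the crush.
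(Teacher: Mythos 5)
Your reduction to a single elementary move and your appeal to the locality of both the flip and the crush match the paper's strategy exactly, and your treatment of the reordering case is fine. The difference is in how the flip case is closed. You defer everything to an explicit enumeration of the configurations of $\gamma$ inside the square $Q$, and you say yourself that this unexecuted case analysis is ``where the content of the proposition lies.'' As written, then, the proof has a genuine gap: the one step carrying the mathematical content is only promised, not performed, and such enumerations are not trivial (compare the thirty-case analysis the paper needs elsewhere just to describe how multiarc coordinates change under a flip).

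The paper avoids the enumeration entirely with a counting argument that you are missing. The key input is the fact, recorded just before the proposition, that the crushed triangulation $\calT_\gamma$ always has exactly $\zeta$ edges (after collapsing bigons and discarding twice-marked spheres). Since $\calT$ and $\calT'$ agree away from the two faces incident to the flipped edge, the crushed triangulations $\calT_\gamma$ and $\calT'_\gamma$ agree away from the images of those faces and hence share at least $\zeta - 1$ edges. Two triangulations of the same surface with $\zeta$ edges each that share $\zeta - 1$ of them are either equal or differ by flipping the one remaining edge, which is precisely the dichotomy you were trying to establish case by case. If you want to keep your argument, you either need to actually carry out the local analysis (including the bigon collapses and discarded components you mention), or insert this edge-counting observation, which renders the case analysis unnecessary.
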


\begin{proof}
The result clearly holds when $p$ consists of a single reordering of the edges of $\calT$. If $p$ consists of a single flip then the combinatorics of $\calT_\gamma$ and $\calT'_\gamma$ agree away from the faces coming from the faces incident to the flipped edge. Thus $\calT_\gamma$ and $\calT'_\gamma$ share at least $\zeta - 1$ edges and so they are either equal or differ by a single flip. The result then follows for all paths by induction on $\ell(p)$.
\end{proof}

In fact when $\calT'$ is obtained by flipping the edge $e$ of $\calT$, we have that $\calT_\gamma$ and $\calT'_\gamma$ are equal if and only if there is an arc of $\gamma$ passing from one side of the square containing $e$ to the opposite side. Following the notation of Figure~\ref{fig:flip}, this occurs if and only if $\intersection(\gamma, a) + \intersection(\gamma, c) \neq \intersection(\gamma, b) + \intersection(\gamma, d)$.

We note that by construction $\ell(p_\gamma) \leq \ell(p)$.

\begin{corollary}
Suppose that $p$ is a path from $\calT$ to $\calT'$ and $\gamma \in \calC(S)$ is a $k$--bounded multicurve. If $\calT(\gamma)$ is $k$--bounded then we can compute $p_\gamma$ in $O(\ell(p) \poly(k))$ operations. \qed
\end{corollary}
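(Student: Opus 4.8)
The plan is to walk along the path $p = \calT_0, \calT_1, \ldots, \calT_{\ell(p)}$ one move at a time, crushing as we go, so that Proposition~\ref{prop:crush_paths} guarantees the sequence we produce (after discarding repeats) is exactly $p_\gamma$. Throughout the walk I would maintain three pieces of data: the current crushed triangulation $(\calT_i)_\gamma$; the intersection vector $\calT_i(\gamma)$; and a labelling that records, for each face and edge of $\calT_i$, where it goes in $(\calT_i)_\gamma$ (that is, which edge or face it becomes, or that it is absorbed into the crushed neighbourhood of $\gamma$ or into a collapsed bigon). To initialise, I would apply Theorem~\ref{thrm:crush_complexity} to the given $k$--bounded vector $\calT(\gamma)$, obtaining $(\calT_0)_\gamma = \calT_\gamma$ together with the labelling in $O(\poly(k))$ operations.

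Now I would step through the $\ell(p)$ moves of $p$. A reordering of the edges of $\calT_{i-1}$ is tracked through the crush as the induced reordering of $(\calT_{i-1})_\gamma$; this costs $O(\zeta) = O(1)$ operations and leaves the intersection vector unchanged. For a flip of an edge $e$ of $\calT_{i-1}$, with the surrounding square having edges $a, b, c, d$ as in Figure~\ref{fig:flip}, I would first update the intersection vector using Proposition~\ref{prop:flip_intersection_laminations}, replacing the $e$--coordinate by $\max(\intersection(\gamma, a) + \intersection(\gamma, c),\ \intersection(\gamma, b) + \intersection(\gamma, d)) - \intersection(\gamma, e)$; since each flip lengthens the coordinates by at most one digit, the entries stay $(k + \ell(p))$--bounded, so this update costs $O(\poly(k))$ operations. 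Then, by the criterion following Proposition~\ref{prop:crush_paths}, the triangulations $(\calT_{i-1})_\gamma$ and $(\calT_i)_\gamma$ coincide exactly when $\intersection(\gamma, a) + \intersection(\gamma, c) \neq \intersection(\gamma, b) + \intersection(\gamma, d)$, which is a single comparison of polynomially bounded numbers. If they coincide, I keep $(\calT_{i-1})_\gamma$ and continue. If not, then $(\calT_i)_\gamma$ is obtained from $(\calT_{i-1})_\gamma$ by flipping the edge that $e$ maps to under the current labelling (collapsing a bigon if one is thereby created), so I perform that flip, update the labelling, and record the new triangulation as the next vertex of $p_\gamma$.

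After all $\ell(p)$ moves have been processed, the sequence of crushed triangulations produced, with repeats discarded, is $p_\gamma$ by Proposition~\ref{prop:crush_paths}. The total cost is $O(\poly(k))$ for the initial application of Theorem~\ref{thrm:crush_complexity}, plus $O(\poly(k))$ for each of the $\ell(p)$ moves — the intersection-vector update, the single comparison, and the purely local modification of the crushed triangulation and its labelling — giving $O(\ell(p) \poly(k))$ operations, as claimed. I expect the only genuinely delicate point to be the maintenance of the edge-and-face correspondence between $\calT_i$ and $(\calT_i)_\gamma$ through bigon collapses, since it is exactly this correspondence that makes ``flip the image of $e$'' unambiguous and keeps the labelling consistent; by contrast, the verification that the intersection coordinates remain polynomially bounded along $p$ is routine and follows directly from the flip formula of Proposition~\ref{prop:flip_intersection_laminations}.
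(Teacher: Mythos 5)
Your proposal is correct and follows exactly the route the paper intends (it states this corollary with no separate proof, as an immediate consequence of Theorem~\ref{thrm:crush_complexity}, Proposition~\ref{prop:crush_paths} and the intersection-number criterion for when a flip descends to the identity): crush once at the start, then track the path move by move, using the flip formula to update $\calT_i(\gamma)$ and the criterion $\intersection(\gamma,a)+\intersection(\gamma,c) \neq \intersection(\gamma,b)+\intersection(\gamma,d)$ to decide whether to flip the crushed triangulation. The only quibble is bookkeeping: after $i$ flips the entries of $\calT_i(\gamma)$ are $(k+\ell(p))$--bounded rather than $k$--bounded, so each step costs $O(\poly(k+\ell(p)))$ rather than $O(\poly(k))$ — the same looseness already present in the paper's own accounting (compare Remark~\ref{rem:compute_image}), and harmless in the application where $k \in O(\ell(p))$.
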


\subsubsection{Partition graphs}
\label{subsub:partition_graphs}

Now let $\canonical(h) \neq \emptyset$ denote the \emph{canonical curve system} of $h$ \cite[Page~373]{FM}. This is the intersection of all maximal $h$--invariant multicurves. We will assume that $\canonical(h)$ has components $\{\gamma_j\}$. Let $S_{\canonical(h)}$ be the surface obtained by crushing $S$ along $\canonical(h)$ and for ease of notation let $\{S_i\}$ be its connected components. Additionally, let $h_i$ be the mapping class induced on $S_i$ by the first return map of $h$.

\begin{definition}[{\cite[Theorem~2]{MMa}, \cite{MosherLecture}}]
The \emph{partition graph} of $h$ is the pair $(H, \phi)$ where:
\begin{itemize}
\item $H$ is the finite graph with:
\begin{itemize}
\item a vertex corresponding to each $h_i$, and
\item an edge corresponding to each $\gamma_i$, connecting between $h_j$ and $h_k$ when $\gamma_i$ meets $S_j$ and $S_k$.
\end{itemize}
\item $\phi$ is the automorphism of $H$ induced by $h$.
\end{itemize}
\end{definition}

Most importantly, we can provide a polynomial-time verifiable certificate that a given graph is the partition graph of $h$. This certificate consists of two pieces of information. Firstly $\calT(\canonical(h))$ which is $O(\ell(p))$--bounded \cite[Corollary~4.9]{BellReducibility} \cite[Theorem~1.1]{KoberdaMangahas}. From this we can compute paths representing each $h_i$ in $O(\poly(\ell(p)))$ operations by using the algorithm of Theorem~\ref{thrm:crush_complexity}. Secondly a certificate accepted by the main algorithm for each pseudo-Anosov $h_i$, this allows us to deduce that the given multicurve is $h$--maximal. 

Such information also allows us to verify that the given multicurve is actually the canonical curve system. To do this we note that it is sufficient to check that for any $\gamma_i$, removing its orbit under $h$ from $\canonical(h)$ does not result in a new periodic component. As there are at most $\zeta \in O(1)$ such orbits to check and again we can compute each $\calT(\gamma_i)$ in $O(\poly(\ell(p)))$ operations \cite[Section~6]{EricksonNayyeri} \cite{AHT} \cite[Theorem~1]{SchaeferSedgwick}, this can also be done in polynomial time.

\subsubsection{Twist invariants}
\label{subsub:twist_invariants}

Associated to each component $\gamma_i$ of the canonical curve system is its \emph{twist invariant} $d_i$. This is a rational number describing the number of (fractional) Dehn twists performed about it \cite{MosherLecture}. We now explicitly describe how to compute these numbers given some additional multicurves.

\begin{definition}
A curve $\gamma$ is \emph{dual} to a component $\gamma_i$ of $\canonical(h)$ if
\[ \intersection(\gamma, \gamma_i) = \begin{cases}
2 & \textrm{ if $\gamma_i$ is separating} \\
1 & \textrm{ otherwise}.
\end{cases} \]
\end{definition}

Now suppose that $\gamma$ is dual to $\gamma_i$ and let $\delta \defeq \partial N(\gamma_i \cup \gamma)$. Let $k \defeq \lcm(1, 2, \ldots, 4 \zeta)$ then
\[ |d_i| = \frac{\intersection(h^k(\gamma), \gamma) - \frac{1}{2}\intersection(h^k(\gamma), \delta)}{k \intersection(\gamma, \gamma_i)}. \]
This particular value of $k$ was chosen such that $h^k$ fixes all components of $\canonical(h)$ and all prongs of singularities of any measured lamination that is projectively invariant under $h$.

To compute the sign of $d_i$ we check whether the number of intersections grows when an additional Dehn twist along $\gamma_i$ is performed. That is, $d_i \geq 0$ if and only if
\[ \intersection(h^k(T_{\gamma_i}(\gamma)), \gamma) \geq \intersection(h^k(\gamma), \gamma). \]

To compute these intersection numbers we use the algorithm of Schaefer, Sedgwick and \v{S}tefankovi\v{c} \cite[Section~5]{Schaefer07computingdehn}. This computes the intersection number of two multicurves from their edge vectors and does so in polynomial time \cite[Lemma~5.4]{Schaefer07computingdehn}.

Thus given the edge vectors of a $\gamma$ and $\delta$, we can use the algorithm of Schaefer, Sedgwick and \v{S}tefankovi\v{c} to first compute $\intersection(\gamma, \gamma_i)$ and so verify that $\gamma$ is dual to $\gamma_i$. Furthermore we can crush $S$ along $\delta$ and check that the component of $S_\delta$ containing $\gamma_i$ and $\gamma$ is either a four times marked sphere or a once-marked torus, depending on whether $\gamma_i$ is separating or not. This verifies that $\delta = \partial N(\gamma_i \cup \gamma)$. Having established that $\gamma$ and $\delta$ are the relevant curves, we can use them to compute $d_i$ by the previous formulae. 

Now as $\calT(\gamma_i)$ is $O(\ell(p))$--bounded there is a dual curve $\gamma$ such that $\calT(\gamma)$ is also $O(\ell(p))$--bounded. For such a $\gamma$, the edge vector of the corresponding $\delta$ is also $O(\ell(p))$--bounded. Therefore these intersection number and $S_\delta$ can all be computed in $O(\poly(\ell(p)))$ operations by the algorithm of Schaefer, Sedgwick and \v{S}tefankovi\v{c} and Theorem~\ref{thrm:crush_complexity}. Hence we can compute $d_i$ in $O(\poly(\ell(p)))$ operations too.

\subsubsection{Equivalence of partition graphs}
\label{subsub:equiv_partition_graphs}

Now suppose that $(G, \phi)$ and $(H, \psi)$ are the partition graphs of $g$ and $h$ respectively. A graph isomorphism $\Phi \from G \to H$ induces a bijection between the marked points of $g_i$ coming from $\canonical(g)$ and those of $\Phi(g_i)$ coming from $\canonical(h)$. We denote this by $D_{g_i} \Phi$. 

\begin{theorem}[{\cite[Theorem~8.3]{MosherLecture} \cite[Section~1.14]{MosherArationality}}]
Suppose that $(G, \phi)$ and $(H, \psi)$ are the partition graphs of $g$ and $h$ respectively. Then $g$ and $h$ are conjugate if and only if there is a graph isomorphism $\Phi \from G \to H$ such that:
\begin{enumerate}
\item $\Phi$ conjugates $\phi$ to $\psi$,
\item $g_i$ and $\Phi(g_i)$ are $D_{g_i} \Phi$--conjugate, and
\item $\gamma_j$ and $\Phi(\gamma_j)$ have the same twist invariant. \qed
\end{enumerate}
\end{theorem}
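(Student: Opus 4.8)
The plan is to prove both implications: the forward direction is naturality of every piece of data under conjugation, and the reverse direction is a cut-and-reglue construction in which condition~(3) is precisely the gluing obstruction.

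\emph{Forward direction.} Suppose $h \equiv f g f^{-1}$ for some $f \in \Mod(S)$. The canonical curve system is natural under conjugation, so $f(\canonical(g)) = \canonical(h)$; hence $f$ descends to a homeomorphism $S_{\canonical(g)} \to S_{\canonical(h)}$ and thereby induces a graph isomorphism $\Phi \from G \to H$. Since $f$ conjugates $g$ to $h$ it intertwines the induced permutations of the pieces and of the curves, which is exactly the statement that $\Phi$ conjugates $\phi$ to $\psi$, giving~(1). The restriction of $f$ to a piece $S_i$ is a homeomorphism onto $\Phi(S_i)$ conjugating the first-return map $g_i$ to $\Phi(g_i) = h_{\Phi(i)}$, and on the marked points created by crushing it realises precisely $D_{g_i}\Phi$, so $g_i$ and $\Phi(g_i)$ are $D_{g_i}\Phi$--conjugate, giving~(2). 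Finally the (fractional) Dehn twist coefficient about a component of $\canonical(g)$ depends only on the germ of the dynamics of $g$ near that curve, hence is a conjugacy invariant, so $\gamma_j$ and $\Phi(\gamma_j)$ carry the same twist invariant, giving~(3).

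\emph{Reverse direction.} Conversely, suppose $\Phi$ satisfies~(1)--(3). I would build a conjugator $f$ piece by piece. Choose one representative piece from each $\phi$--orbit; by~(2) pick a homeomorphism $f_i \from S_i \to \Phi(S_i)$ with $f_i g_i f_i^{-1} \equiv \Phi(g_i)$ inducing $D_{g_i}\Phi$ on the crushing marked points (this uses the solution of the permutation conjugacy problem on the piece, pseudo-Anosov or periodic, as developed above). Transporting these choices around the orbits by $g$ and $h$, and using~(1) to guarantee consistency, yields a family of homeomorphisms between all the pieces that is equivariant with respect to $g$ and $h$, i.e. a homeomorphism $\widehat f \from S_{\canonical(g)} \to S_{\canonical(h)}$ conjugating the map induced by $g$ to the map induced by $h$. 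Because $\Phi$ is a graph isomorphism and each $f_i$ induces $D_{g_i}\Phi$ on marked points, the pairs of marked points of $S_{\canonical(g)}$ that get reglued to recover a component $\gamma_j$ are matched by $\widehat f$ with the corresponding pair for $\Phi(\gamma_j)$; hence $\widehat f$ un-crushes to a homeomorphism $f$ from $S$ minus a regular neighbourhood of $\canonical(g)$ to $S$ minus a regular neighbourhood of $\canonical(h)$.

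\emph{Matching the twisting.} It remains to extend $f$ across the annular neighbourhoods $N(\gamma_j)$ and to check that the resulting mapping class conjugates $g$ to $h$. Each such extension is well-defined up to a power of $T_{\gamma_j}$, and on the pieces $fgf^{-1}$ already agrees with $h$ by construction, so $fgf^{-1}h^{-1}$ is isotopic to a product of powers of the $T_{\gamma_j}$. The exponent attached to $\gamma_j$ is exactly the difference of the twist invariants of $\gamma_j$ and $\Phi(\gamma_j)$: its fractional part is forced by the prong data of the adjacent piece maps, which $\widehat f$ already respects, and it is precisely this fact that makes the twist invariant, rather than an integer twisting number, the correct bookkeeping quantity. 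By~(3) every such difference vanishes, so after absorbing the integer ambiguities into the choice of extension we obtain $fgf^{-1} \equiv h$. I expect the main obstacle to be exactly this last step: making precise how the fractional Dehn twist coefficients of the pieces combine with the integer regluing ambiguities, and thereby verifying that~(3) is \emph{exactly} the obstruction to matching $g$ and $h$ in a neighbourhood of their canonical curve systems. A secondary point is to check that the equivariant family in the previous paragraph is genuinely well-defined, i.e. that travelling once around a $\phi$--orbit returns the identity; this follows from~(1) together with the uniqueness up to isotopy of a conjugator on a pseudo-Anosov piece that fixes the marked points (and its periodic analogue from the permutation conjugacy discussion).
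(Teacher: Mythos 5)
The paper does not actually prove this statement: it is quoted with a \qed{} directly from Mosher, so there is no in-paper argument to compare against. Your sketch is the standard cut-and-reglue proof behind Mosher's classification, and as an outline it is essentially correct: naturality of $\canonical(\cdot)$, of the induced piece maps, and of the twist invariants gives the forward direction, and the reverse direction is the equivariant assembly of piece conjugators followed by the annular extension. Two remarks on the points you flag as delicate. First, the consistency of the transported family around a $\phi$--orbit of length $n$ does not need uniqueness of conjugators on the pieces: defining $f$ on $g^k(S_i)$ as $h^k f_i g^{-k}$, the only closing-up condition is $h^n f_i g^{-n} \simeq f_i$ on $S_i$, which is precisely the statement that $f_i$ conjugates the first return map $g_i$ to $h_{\Phi(i)}$, i.e.\ condition~(2); what you do need (and should say) is that the marked-point condition $f_i|_V = D_{g_i}\Phi$ is what allows $\widehat f$ to un-crush to a homeomorphism of the complements of the annular neighbourhoods. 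Second, in the last step the integer ambiguity is not independent annulus by annulus: changing the extension over one annulus in a $g$--orbit of components changes the exponent vector of $fgf^{-1}h^{-1}$ by a coboundary for the permutation action of $h$ on $\{\gamma_j\}$, so only the sum of the exponents over each orbit is an obstruction. That orbit sum is exactly what the twist invariant records --- this is why $d_i$ is computed from $h^k$ with $k = \lcm(1,\ldots,4\zeta)$ so that every component (and every prong of the adjacent invariant laminations) is fixed --- and condition~(3) kills it. Making these two points explicit would close the gaps you yourself identify; as written the proposal is a faithful but incomplete sketch of Mosher's argument rather than a full proof.
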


Thus when $g$ and $h$ are not conjugate, for each graph isomorphism $\Phi \from G \to H$ either:
\begin{enumerate}
\item $\Phi \circ \phi \neq \psi \circ \Phi$, which we can check in $\zeta \in O(1)$ operations,
\item there is a vertex $g_i \in G$ such that $g_i$ and $\Phi(g_i)$ are not $D_{g_i} \Phi$--conjugate and so by Corollary~\ref{cor:p_conjugate} and Corollary~\ref{cor:ap_ir_conjugate} we can provide a polynomial-time checkable proof of this fact, or
\item there is an edge $\gamma_j \in G$ such that $\gamma_j$ and $\Phi(\gamma_j)$ have different twist invariants and so by Lemma~\ref{subsub:twist_invariants} we can provide a polynomial-time checkable proof of this fact.
\end{enumerate}
By doing this for each of the at most $\zeta ! \in O(1)$ such isomorphisms, we can provide a proof that $g$ and $h$ are not conjugate which can be verified in $O(\poly(\ell(p) + \ell(q)))$ operations.

Again, applying this result to the standard path of a word we obtain that:
\begin{corollary}
\label{cor:ap_r_conjugate}
Deciding whether two aperiodic, reducible words $g, h \in X^*$ correspond to conjugate mapping classes is a problem in $\coNP$. \qed
\end{corollary}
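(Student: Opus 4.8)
The plan is to assemble the machinery of Section~\ref{sub:aperiodic_reducible} into a single polynomial-time-verifiable certificate of \emph{non}-conjugacy. Represent $h$ by its standard path $p$ from $\calT$ to $h(\calT)$ with $\ell(p) \in O(\ell(h))$, and $g$ by a path $q$ from $\calT'$ to $g(\calT')$ with $\ell(q) \in O(\ell(g))$. A certificate that $g$ and $h$ are not conjugate will carry, for each of $g$ and $h$, the data needed to reconstruct its partition graph: the edge vector of the canonical curve system --- which is $O(\ell(p))$--bounded by \cite[Corollary~4.9]{BellReducibility} --- an acceptable main-algorithm certificate for each pseudo-Anosov first-return piece, and an $O(\ell(p))$--bounded dual curve $\gamma$ for each component $\gamma_i$ of the canonical curve system so that the twist invariant $d_i$ may be recovered. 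On top of this it will carry, for each graph isomorphism $\Phi$ between the partition graph $G$ of $g$ and the partition graph $H$ of $h$, a witness that $\Phi$ fails one of the three conditions of the structure theorem for partition graphs.

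Next I would verify that all of this is checkable in time polynomial in $\ell(p) + \ell(q)$. Reconstructing each partition graph from the edge vector of its canonical curve system uses the crushing algorithm of Theorem~\ref{thrm:crush_complexity} together with the component-extraction algorithm of \cite{AHT}, both of which run in $O(\poly(\ell(p)))$ time on $O(\ell(p))$--bounded input; the pseudo-Anosov certificates for the pieces are checked by the main algorithm in $O(\ell(p)^4)$ time (Theorem~\ref{thrm:algorithm_analysis}), which simultaneously confirms that the given multicurve is $h$--maximal; confirming that it is moreover the canonical curve system needs only the $\zeta \in O(1)$ checks that deleting an $h$--orbit produces no new periodic component; and each twist invariant is computed from its dual curve $\gamma$ and the boundary curve $\delta = \partial N(\gamma_i \cup \gamma)$ using the intersection-number algorithm of Schaefer, Sedgwick and \v{S}tefankovi\v{c} and the crushing algorithm, again in $O(\poly(\ell(p)))$ time. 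By the structure theorem for partition graphs, $g$ and $h$ are conjugate if and only if some isomorphism $\Phi \from G \to H$ conjugates $\phi$ to $\psi$, makes each pair $g_i$, $\Phi(g_i)$ $D_{g_i}\Phi$--conjugate, and matches twist invariants. Since $G$ and $H$ have at most $\zeta$ vertices there are at most $\zeta! \in O(1)$ candidate isomorphisms $\Phi$; when $g$ and $h$ are not conjugate, each one admits a witness of failure of one of the three types, verifiable respectively in $O(1)$ time, via the permutation-conjugacy versions of Corollary~\ref{cor:p_conjugate} and Corollary~\ref{cor:ap_ir_conjugate}, and via the twist-invariant computation. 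Applying all this to the standard paths of words, and using $\ell(p) \in O(\ell(h))$ and $\ell(q) \in O(\ell(g))$, gives a verifier running in time polynomial in $\ell(g) + \ell(h)$.

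I expect the only delicate point to be the bookkeeping needed to prevent a dishonest prover from smuggling in a false partition graph, a spurious ``dual'' curve, or a non-canonical curve system: every claimed object --- the edge vector of $\canonical(\cdot)$, the pseudo-Anosov certificates, each dual curve $\gamma$ and boundary curve $\delta$ --- must come with its own polynomial-time soundness check, and one must verify that all of these objects have $O(\ell(p))$--bounded edge vectors so that the invoked algorithms indeed run in polynomial time. These checks are exactly the ones listed in Section~\ref{sub:aperiodic_reducible}, and combining them over the $O(1)$ isomorphisms $\Phi$ yields the required polynomial-time verifier for non-conjugacy, which is the assertion that the problem lies in $\coNP$.
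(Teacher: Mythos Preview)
Your proposal is correct and follows essentially the same route as the paper: the corollary is stated there without proof, as it is meant to follow directly from the discussion in Section~\ref{subsub:equiv_partition_graphs}, where one provides, for each of the at most $\zeta!\in O(1)$ graph isomorphisms $\Phi\from G\to H$, a polynomial-time-verifiable witness that one of the three conditions of the structure theorem fails. Your write-up simply makes explicit the bookkeeping (soundness checks on $\canonical(\cdot)$, the dual curves, and the pseudo-Anosov certificates for the pieces) that the paper distributes across Sections~\ref{subsub:crushing}--\ref{subsub:twist_invariants}.
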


Together with Corollary~\ref{cor:p_conjugate} and Corollary~\ref{cor:ap_ir_conjugate} this shows that:

\begin{corollary}
\label{cor:conjugacy_coNP}
Fix $S$, a marked surface, and $X$, a finite generating set of $\Mod(S)$. Deciding whether two words $g, h \in X^*$ correspond to conjugate mapping classes is a problem in $\coNP$. \qed
\end{corollary}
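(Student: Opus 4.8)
The plan is to assemble Corollary~\ref{cor:conjugacy_coNP} from the three type-specific results already established, namely Corollary~\ref{cor:p_conjugate} (periodic), Corollary~\ref{cor:ap_ir_conjugate} (aperiodic irreducible) and Corollary~\ref{cor:ap_r_conjugate} (aperiodic reducible), together with a short argument disposing of pairs of words whose Nielsen--Thurston types differ. Throughout I would represent $g$ and $h$ by standard paths $q$ from $\calT'$ to $g(\calT')$ and $p$ from $\calT$ to $h(\calT)$ with $\ell(q) \in O(\ell(g))$ and $\ell(p) \in O(\ell(h))$, so that any bound of the form $O(\poly(\ell(p) + \ell(q)))$ becomes one of the form $O(\poly(\ell(g) + \ell(h)))$, as required.

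First I would record that conjugate mapping classes have the same Nielsen--Thurston type, so a certificate that $g$ and $h$ are \emph{not} conjugate may begin by pinning down their types. As recalled in the introduction, deciding whether a word is periodic is possible in polynomial time (using the polynomial-time word problem for $\Mod(S)$ together with the bound on the order of a torsion element), so the verifier determines unaided whether each of $g$ and $h$ is periodic. If exactly one of them is periodic, the verifier concludes at once that they are not conjugate. If both are periodic, the instance is exactly the one handled by Corollary~\ref{cor:p_conjugate}, so a short disproof of conjugacy exists and is checkable in $O(\poly(\ell(p)+\ell(q)))$ operations.

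If neither word is periodic, the certificate should additionally declare, for each of $g$ and $h$, whether it is pseudo-Anosov or reducible, together with the supporting data: an acceptable certificate for the main algorithm (Theorem~\ref{thrm:algorithm_correct}) in the pseudo-Anosov case, and a reducing curve of complexity $O(\poly(\ell(\cdot)))$, as furnished by \cite[Theorem~3.3]{BellReducibility}, in the reducible case. The verifier checks this data in polynomial time; since a non-periodic mapping class is pseudo-Anosov precisely when it is not reducible, the declared type is forced to be the correct one whenever the supporting data checks out. If the verified types of $g$ and $h$ differ, the verifier concludes they are not conjugate. If both are (verified) pseudo-Anosov, the instance falls under Corollary~\ref{cor:ap_ir_conjugate}; if both are (verified) reducible, it falls under Corollary~\ref{cor:ap_r_conjugate}; in either case the certificate appends the appropriate disproof of conjugacy from that corollary, which the verifier checks in $O(\poly(\ell(p)+\ell(q)))$ operations.

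Putting the cases together gives the claim: whenever $g$ and $h$ are not conjugate there is a certificate of total size $O(\poly(\ell(g)+\ell(h)))$ that the verifier accepts, and whenever they are conjugate no such certificate is accepted --- an honest type declaration must then agree on both sides, after which the relevant corollary forbids any accepted disproof of conjugacy, while a dishonest type declaration fails the type-verification step (a pseudo-Anosov class admits no essential reducing curve, and a reducible one admits no acceptable main-algorithm certificate). Hence non-conjugacy lies in $\NP$, i.e. the conjugacy problem lies in $\coNP$, and as usual an exponential-time decision procedure follows. I expect the main obstacle to be bookkeeping rather than mathematics: one must check that the type-classification step is itself either polynomial-time (periodicity) or polynomial-time certifiable (pseudo-Anosov via the main algorithm, reducibility via a short reducing curve), and that the mixed-type instances are handled soundly in both directions; everything substantive is an appeal to the three preceding corollaries.
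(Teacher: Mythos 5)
Your proposal is correct and follows the paper's route exactly: the paper derives this corollary immediately from Corollaries~\ref{cor:p_conjugate}, \ref{cor:ap_ir_conjugate} and \ref{cor:ap_r_conjugate}, with no further written argument. The extra detail you supply --- certifying the Nielsen--Thurston type of each word (periodicity in deterministic polynomial time, pseudo-Anosov via the main algorithm, reducibility via a short reducing curve) and rejecting mixed-type pairs --- is exactly the bookkeeping the paper leaves implicit, and it is handled soundly.
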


Finally, we note that again this immediately gives an exponential time algorithm to determine whether two words represent conjugate mapping classes.

\section{Implementation}
\label{sec:implementation}

The main algorithm has been implemented as part of \texttt{flipper} \cite{flipper}.

However the implementation of the main algorithm used by \texttt{flipper} has some important differences. These stem from the fact that the constant $K$ of Theorem~\ref{thrm:linear_conjugator_bound} is unknown. Therefore to decide whether a given lamination is filling \texttt{flipper} computes the splitting sequence $T_0, T_1, \ldots$ until it finds a pair which are projectively equal. Hence \texttt{flipper} will actually perform $t + m$ splits, although this is still $O(\ell(h)^2)$. As the total number of steps needed is unknown at the beginning, \texttt{flipper} records the weights of $T_i$ algebraically as integer linear combinations of the weights of $T_0$. It is possible that the sequence of splits continues for so long that the precision to which these weights are known no longer uniquely determines them. In this case $T_0$ is recalculated to higher precision using an iterative technique and the computation can continue.

An implementation of the solution to the conjugacy problem in the pseudo-Anosov case is also included in \texttt{flipper}.

\bibliographystyle{plain}
\bibliography{bibliography}

\begin{thebibliography}{10}

\bibitem{A}
Ian Agol.
\newblock Ideal triangulations of pseudo-{A}nosov mapping tori.
\newblock In {\em Topology and geometry in dimension three}, volume 560 of {\em
  Contemp. Math.}, pages 1--17. Amer. Math. Soc., Providence, RI, 2011.

\bibitem{AHT}
Ian Agol, Joel Hass, and William Thurston.
\newblock The computational complexity of knot genus and spanning area.
\newblock {\em Trans. Amer. Math. Soc.}, 358(9):3821--3850, 2006.

\bibitem{BasuPollackRoy}
Saugata Basu, Richard Pollack, and Marie-Fran{\c{c}}oise Roy.
\newblock {\em Algorithms in real algebraic geometry}, volume~10 of {\em
  Algorithms and Computation in Mathematics}.
\newblock Springer-Verlag, Berlin, second edition, 2006.

\bibitem{BellReducibility}
M.~C. {Bell}.
\newblock {Deciding reducibility of mapping classes is in $\textbf{NP}$}.
\newblock {\em ArXiv e-prints}, March 2014.

\bibitem{flipper}
Mark Bell.
\newblock flipper (computer software).
\newblock \url{https://bitbucket.org/Mark_Bell/flipper/}, 2013--2016.
\newblock Version 0.9.9.

\bibitem{BridsonHaefliger}
Martin~R. Bridson and Andr{\'e} Haefliger.
\newblock {\em Metric spaces of non-positive curvature}, volume 319 of {\em
  Grundlehren der Mathematischen Wissenschaften [Fundamental Principles of
  Mathematical Sciences]}.
\newblock Springer-Verlag, Berlin, 1999.

\bibitem{CB}
Andrew~J. Casson and Steven~A. Bleiler.
\newblock {\em Automorphisms of surfaces after {N}ielsen and {T}hurston},
  volume~9 of {\em London Mathematical Society Student Texts}.
\newblock Cambridge University Press, Cambridge, 1988.

\bibitem{Cohen}
Henri Cohen.
\newblock {\em A course in computational algebraic number theory}, volume 138
  of {\em Graduate Texts in Mathematics}.
\newblock Springer-Verlag, Berlin, 1993.

\bibitem{EricksonNayyeri}
Jeff Erickson and Amir Nayyeri.
\newblock Tracing compressed curves in triangulated surfaces.
\newblock {\em Discrete Comput. Geom.}, 49(4):823--863, 2013.

\bibitem{FM}
Benson Farb and Dan Margalit.
\newblock {\em A primer on mapping class groups}, volume~49 of {\em Princeton
  Mathematical Series}.
\newblock Princeton University Press, Princeton, NJ, 2012.

\bibitem{FLP}
Albert Fathi, Fran{\c{c}}ois Laudenbach, and Valentin Po{\'e}naru.
\newblock {\em Thurston's work on surfaces}, volume~48 of {\em Mathematical
  Notes}.
\newblock Princeton University Press, Princeton, NJ, 2012.
\newblock Translated from the 1979 French original by Djun M. Kim and Dan
  Margalit.

\bibitem{AoCPII}
Donald~E. Knuth.
\newblock {\em The art of computer programming. {V}ol. 2}.
\newblock Addison-Wesley, Reading, MA, 1998.
\newblock Seminumerical algorithms, Third edition [of MR0286318].

\bibitem{KoberdaMangahas}
Thomas Koberda and Johanna Mangahas.
\newblock An effective algebraic detection of the {N}ielsen-{T}hurston
  classification of mapping classes.
\newblock {\em J. Topol. Anal.}, 7(1):1--21, 2015.

\bibitem{MMa}
Yukio Matsumoto and Jos{\'e}~Mar{\'{\i}}a Montesinos-Amilibia.
\newblock Pseudo-periodic homeomorphisms and degeneration of {R}iemann
  surfaces.
\newblock {\em Bull. Amer. Math. Soc. (N.S.)}, 30(1):70--75, 1994.

\bibitem{MosherFoliations}
Lee Mosher.
\newblock Tiling the projective foliation space of a punctured surface.
\newblock {\em Trans. Amer. Math. Soc.}, 306(1):1--70, 1988.

\bibitem{MosherAutomatic}
Lee Mosher.
\newblock Mapping class groups are automatic.
\newblock {\em Ann. of Math. (2)}, 142(2):303--384, 1995.

\bibitem{MosherArationality}
Lee Mosher.
\newblock Train track expansions of measured foliations.
\newblock Available at \url{http://andromeda.rutgers.edu/~mosher/}, 2003.
\newblock Preprint.

\bibitem{MosherLecture}
Lee Mosher.
\newblock Mapping class groups.
\newblock Available at \url{http://andromeda.rutgers.edu/~mosher/}, 2007.
\newblock Lecture notes.

\bibitem{PH}
R.~C. Penner and J.~L. Harer.
\newblock {\em Combinatorics of train tracks}, volume 125 of {\em Annals of
  Mathematics Studies}.
\newblock Princeton University Press, Princeton, NJ, 1992.

\bibitem{SchaeferSedgwick}
Marcus Schaefer, Eric Sedgwick, and Daniel {\v{S}}tefankovi{\v{c}}.
\newblock Algorithms for normal curves and surfaces.
\newblock In {\em Computing and combinatorics}, volume 2387 of {\em Lecture
  Notes in Comput. Sci.}, pages 370--380. Springer, Berlin, 2002.

\bibitem{Schaefer07computingdehn}
Marcus Schaefer, Eric Sedgwick, and Daniel Stefankovic.
\newblock Computing {D}ehn twists and geometric intersection numbers in
  polynomial time, 2007.

\bibitem{jT}
Jing Tao.
\newblock Linearly bounded conjugator property for mapping class groups.
\newblock {\em Geom. Funct. Anal.}, 23(1):415--466, 2013.

\bibitem{W}
Michel Waldschmidt.
\newblock {\em Diophantine approximation on linear algebraic groups}, volume
  326 of {\em Grundlehren der Mathematischen Wissenschaften [Fundamental
  Principles of Mathematical Sciences]}.
\newblock Springer-Verlag, Berlin, 2000.
\newblock Transcendence properties of the exponential function in several
  variables.

\end{thebibliography}

\end{document}